\definecolor{shadecolor}{gray}{0.875}
\newtheorem{thrm}{Theorem}[section]
\newtheorem{lem}[thrm]{Lemma}
\newtheorem{cor}[thrm]{Corollary}
\newtheorem{prop}[thrm]{Proposition}
\newtheorem{conj}[thrm]{Conjecture}
\theoremstyle{definition}
\newtheorem{defn}[thrm]{Definition}
\newtheorem{exmple}[thrm]{Example}
\newtheorem{rmk}[thrm]{Remark}
\newenvironment{claim}
            {\par \bigskip \noindent \textbf{Claim:}}
            {$\Box$ \par \noindent}
\DeclareMathOperator{\HNF}{HNF}
\DeclareMathOperator{\Eff}{\overline{Eff}}
\DeclareMathOperator{\Nef}{Nef}
\DeclareMathOperator{\Mov}{Mov}
\DeclareMathOperator{\vol}{vol}
\DeclareMathOperator{\mc}{mc}
\DeclareMathOperator{\rmc}{rmc}
\DeclareMathOperator{\wmc}{wmc}
\DeclareMathOperator{\mob}{mob}
\DeclareMathOperator{\wmob}{wmob}
\DeclareMathOperator{\CI}{CI}
\DeclareMathOperator{\mult}{mult}
\DeclareMathOperator{\HConc}{HConc}
\DeclareMathOperator{\BC}{BC}
\title{\textbf{Positivity functions for curves on algebraic varieties}}
\author{\textsc{Brian Lehmann and Jian Xiao}}
\date{}
\begin{document}

\begin{abstract}
This is the second part of our work on Zariski decomposition structures, where we compare two different volume type functions for curve classes.  The first function is the polar transform of the volume for ample divisor classes.  The second function captures the asymptotic geometry of curves analogously to the volume function for divisors.  We prove that the two functions coincide, generalizing Zariski's classical result for surfaces to all varieties.  Our result confirms the log concavity conjecture of the first named author for weighted mobility of curve classes in an unexpected way, via Legendre-Fenchel type transforms. We also give a number of applications to birational geometry, including a refined structure theorem for the movable cone of curves.
\end{abstract}

\maketitle

\section{Introduction}

Let $X$ be a smooth complex projective variety of dimension $n$ and let $L$ be a divisor on $X$.  Perhaps the most important birational invariant of $L$ is its volume, defined as
\begin{equation*}
\vol(L) := \limsup_{m \to \infty} \frac{\dim H^{0}(X,mL)}{m^{n}/n!}.
\end{equation*}
When $X$ is a surface, the volume of $L$ can be calculated using intersection theory.  The key construction is the Zariski decomposition of \cite{zariski62}, which splits a curve $L$ into a ``positive'' part and a ``rigid'' part.  In higher dimensions as well, there is a close relationship between the asymptotic geometry of divisors and the positivity of numerical classes via volume-type functions. 

In this paper we develop an analogous theory for curve classes on arbitrary varieties: asymptotic geometry is controlled by intersection theory.  We analyze this relationship by comparing several natural volume-type functions for curve classes.  The first function involves the numerical positivity of a curve class.

\begin{defn}(see \cite[ Definition 1.1]{xiao15}) \label{defn:widehatvol}
Let $X$ be a projective variety of dimension $n$ and let $\alpha \in \Eff_{1}(X)$ be a pseudo-effective curve class.  Then the volume of $\alpha$ is defined to be
\begin{align*}
\widehat{\vol}(\alpha) = \inf_{A \textrm{ big and nef divisor class}} \left( \frac{A \cdot \alpha}{\vol(A)^{1/n}} \right)^{{n}/{n-1}}.
\end{align*}
When $\alpha$ is a curve class that is not pseudo-effective, we set $\widehat{\vol}(\alpha)=0$.
\end{defn}

This is a polar transformation of the volume function on the ample cone of divisors.  The definition is inspired by the realization that the volume of a divisor has a similar intersection-theoretic description against curves as in \cite[Theorem 2.1]{xiao15}.  It fits into a much broader picture relating positivity of divisors and curves via cone duality; see \cite{lehmannxiao2015a}.

The second function captures the asymptotic geometry of curves.  Recall that general points impose independent codimension $1$ conditions on divisors in a linear series.  Thus for a divisor $L$, one can interpret $\dim \mathbb{P}(H^{0}(X,L))$ as a measurement of how many general points are contained in sections of $L$.  Using this interpretation, we define the mobility function for curves in an analogous way.

\begin{defn}
(see \cite[Definition 1.1]{lehmann14})
Let $X$ be a projective variety of dimension $n$ and let $\alpha \in N_{1}(X)$ be a curve class with integer coefficients.  The mobility of $\alpha$ is defined to be
\begin{equation*}
\mob(\alpha):= \limsup_{m \to \infty} \frac{\max \left\{ b \in \mathbb{Z}_{\geq 0} \, \left| \,
\begin{array}{c}
\textrm{Any }b \textrm{ general points are contained}\\
\textrm{in an effective curve of class } m\alpha
\end{array} \right. \right\}}{m^{{n}/{n-1}}/n!}.
\end{equation*}
\end{defn}

There is a closely related function known as the weighted mobility which counts singular points of the curve with a ``higher weight''.  We first recall the definition of the weighted mobility count for a class $\alpha \in N_{1}(X)$ with integer coefficients (see \cite[Definition 8.7]{lehmann14}):
\begin{equation*}
\wmc(\alpha) =\sup_{\mu} \max \left\{ b \in \mathbb{Z}_{\geq 0} \, \left| \, \begin{array}{c} \textrm{there is an effective cycle of class } \mu \alpha \\
\textrm{ through any }b\textrm{ points of } X \textrm{ with}  \\ \textrm{multiplicity at least }\mu \textrm{ at each point}  \end{array} \right. \right\}.
\end{equation*}
The supremum is shown to exist in \cite{lehmann14} -- it is then clear that the supremum is achieved by some positive integer $\mu$.  We define the weighted mobility to be
\begin{equation*}
\wmob(\alpha) = \limsup_{m \to \infty} \frac{ \wmc(m\alpha) }{m^{{n}/{n-1}}}.
\end{equation*}
While the definition is slightly more complicated, the weighted mobility is easier to compute due to its close relationship with Seshadri constants.
In \cite{lehmann14}, the first named author shows that both the mobility and weighted mobility extend to continuous homogeneous functions on all of $N_{1}(X)$.

Our main theorem compares these functions.  It continues a project begun by the second named author (see \cite[Conjecture 3.1 and Theorem 3.2]{xiao15}).

\begin{thrm}
\label{thm volversusmob}
Let $X$ be a smooth projective variety of dimension $n$ and let $\alpha \in \Eff_{1}(X)$ be a pseudo-effective curve class.  Then:
\begin{enumerate}
\item $\widehat{\vol}(\alpha) = \wmob(\alpha)$.
\item $\widehat{\vol}(\alpha) \leq \mob(\alpha) \leq n! \widehat{\vol}(\alpha)$.
\item Assume Conjecture \ref{conj mobandciconj} below.  Then $\mob(\alpha) = \widehat{\vol}(\alpha)$.
\end{enumerate}
\end{thrm}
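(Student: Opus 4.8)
The plan is to prove (1) by establishing the two inequalities $\wmob(\alpha)\le\widehat{\vol}(\alpha)$ and $\widehat{\vol}(\alpha)\le\wmob(\alpha)$ separately, and then to deduce (2) and (3) from (1) together with elementary comparisons of the underlying counting functions. The organizing principle is that $\widehat{\vol}$ is a polar (Legendre--Fenchel) transform of the concave function $\vol(\cdot)^{1/n}$ on the nef cone, so its infimum should be witnessed geometrically by complete intersection curves cut out by the optimizing divisor class; the role of the weighted count is precisely to realize this transform on the side of curves.

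For the upper bound $\wmob(\alpha)\le\widehat{\vol}(\alpha)$ I would work on a blow-up. Fix a big and nef class $A$ and suppose an effective cycle $Z$ of class $\mu m\alpha$ meets $b$ general points $p_1,\dots,p_b$ with multiplicity at least $\mu$ at each. Let $\pi\colon Y\to X$ be the blow-up at the $p_i$ with exceptional divisors $E_i$. For any $t$ below the multi-point Seshadri constant the class $\pi^*A-t\sum_i E_i$ is nef, and pairing it with the strict transform of $Z$ yields $\mu m\,(A\cdot\alpha)\ge t\,\mu\,b$. The analytic input is that the multi-point Seshadri constant for general points is asymptotically governed by the volume, $\epsilon(A;b)\sim\vol(A)^{1/n}b^{-1/n}$ as $b\to\infty$; inserting this gives $b\le\big(m(A\cdot\alpha)/\vol(A)^{1/n}\big)^{n/(n-1)}$, and taking the limsup in $m$ and the infimum over $A$ yields the claim.

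For the lower bound $\widehat{\vol}(\alpha)\le\wmob(\alpha)$ I would first treat complete intersection classes. Given $A$ big and nef, choose $n-1$ general members $D_1,\dots,D_{n-1}\in|mA|$, each singular to order $\mu$ at each of $b$ chosen points; a dimension count shows this is feasible whenever $b\,\mu^n\le\vol(A)\,m^n$ to leading order, and the intersection curve $C=D_1\cap\cdots\cap D_{n-1}$ then has class $m^{n-1}A^{n-1}$ and multiplicity $\mu^{n-1}$ at each point. Reading this off as a weighted mobility estimate for the class $A^{n-1}$ gives $\wmob(A^{n-1})\ge\vol(A)=\widehat{\vol}(A^{n-1})$. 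To pass from complete intersection classes to an arbitrary pseudo-effective $\alpha$ I would invoke the Zariski decomposition structure and polar duality of the first part of this project: the infimum defining $\widehat{\vol}(\alpha)$ is attained at a nef class $A$ whose positive product $A^{n-1}$ is proportional to the movable part of $\alpha$, and the continuity and homogeneity of $\wmob$ on $N_1(X)$ then propagate the estimate to all classes. \textbf{This passage from complete intersection classes to general classes is where I expect the main difficulty to lie}: a general curve class is not a complete intersection, the optimizing $A$ may degenerate to the boundary of the nef cone, and controlling the number of points and their multiplicities uniformly along such a limit is the crux of the argument.

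Finally, (2) follows from (1) by comparing counts. Taking $\mu=1$ in the weighted count recovers the ordinary one, so $\mc(m\alpha)\le\wmc(m\alpha)$, and the differing normalizations give $\mob(\alpha)\le n!\,\wmob(\alpha)=n!\,\widehat{\vol}(\alpha)$. The reverse inequality $\widehat{\vol}(\alpha)\le\mob(\alpha)$ comes from the same complete intersection construction with only simple base points imposed, which produces curves through $\sim\vol(A)\,m^n/n!$ general points and hence $\mob(A^{n-1})\ge\vol(A)$, extended to all $\alpha$ by the same continuity argument. For (3), Conjecture \ref{conj mobandciconj} identifies the mobility with the complete intersection count, thereby removing the factor $n!$ from the upper estimate in (2) and forcing $\mob(\alpha)=\widehat{\vol}(\alpha)$.
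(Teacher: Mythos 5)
Your overall architecture --- prove the weighted statement first, obtain the lower bounds from the Zariski decomposition $\widehat{\vol}(\alpha)=\sup_{B\ \mathrm{nef},\,B^{n-1}\preceq\alpha}B^{n}$ (this is exactly Theorem \ref{thm curve decomposition}) together with $\wmob(B^{n-1})=B^{n}$ and monotonicity of the counts under adding pseudo-effective classes, and deduce $\mob\le n!\,\widehat{\vol}$ from $\mc\le\wmc$ and the mismatch of normalizations --- matches the paper in outline, and that last deduction is a legitimate shortcut (the paper instead reruns the point-counting bound directly for $\mob$). But there are two genuine gaps. First, your upper bound $\wmob\le\widehat{\vol}$ invokes the asymptotic \emph{lower} bound for multi-point Seshadri constants at general points, i.e.\ nefness of $\pi^{*}A-t\sum_{i}E_{i}$ for $t$ close to $(\vol(A)/b)^{1/n}$. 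That is a Nagata-type statement --- the hard direction --- and it is not available in the generality you need (arbitrary $X$, arbitrary big and nef $A$, all dimensions). The paper's Proposition \ref{multiplicityestimate} needs only the easy direction: a dimension count produces a single divisor $M\equiv mA$ with $\mult_{x_{i}}M\gtrsim m(\vol(A)/b)^{1/n}$ at $b$ very general points, and the extra general point $y$ guarantees a member of the family not contained in $\Supp(M)$, so $m(A\cdot C)\ge\sum_{i}k_{i}\mult_{x_{i}}M$ is just the local intersection inequality. You should replace the Seshadri input by this elementary argument; as written, your step would fail (or at best rest on an unproven input).

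Second, your part (3) is not an argument. Conjecture \ref{conj mobandciconj} only controls classes of the form $A^{n-1}$; it cannot ``remove the factor $n!$'' from an estimate that was derived for arbitrary $\alpha$ by comparing $\mc$ with $\wmc$. The entire content of this part is the reduction of a general $\alpha$ to complete intersection classes, which requires machinery you do not mention: Lemma \ref{lemma fl pull back mob} (from \cite{fl14}) producing a big movable $\beta\preceq\alpha$ with $\mob(\alpha)=\mob(\beta)=\mob(\phi^{*}\beta)$ for every birational $\phi$; Fujita approximation to show, under the conjecture, that $\mob(\phi_{\epsilon}^{*}\beta)$ can be made arbitrarily close to $\mathfrak{M}(\beta)$; and Theorem \ref{thm solution to xiao} (equality $\widehat{\vol}=\mathfrak{M}$ on big movable classes holds exactly on $\CI_{1}(X)$) to force $\beta\in\CI_{1}(X)$, after which Theorem \ref{thm curve decomposition} closes the argument. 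Without this chain the conclusion $\mob(\alpha)=\widehat{\vol}(\alpha)$ does not follow from what you have written.
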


Theorem \ref{thm volversusmob} is quite surprising: it suggests that the mobility count of \emph{any} curve class is optimized by complete intersection curves; see Section \ref{section curvezar}.  
The final part of this theorem relies on the following (difficult) conjectural description of the mobility of a complete intersection class:

\begin{conj}(see \cite[Question 7.1]{lehmann14}) \label{conj mobandciconj}
Let $X$ be a smooth projective variety of dimension $n$ and let $A$ be an ample divisor on $X$.  Then
\begin{equation*}
\mob(A^{n-1}) = A^{n}.
\end{equation*}
\end{conj}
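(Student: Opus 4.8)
The conjecture is exactly the assertion that $\mob(A^{n-1}) = \widehat{\vol}(A^{n-1})$ for the complete intersection class $A^{n-1}$, so by Theorem~\ref{thm volversusmob}(2) the content splits into two inequalities of very different character. First I would compute $\widehat{\vol}(A^{n-1})$: one minimizes $B \cdot A^{n-1}/\vol(B)^{1/n}$ over big and nef $B$, and since $\vol(B) = B^n$ for nef $B$, the Khovanskii--Teissier inequality $B \cdot A^{n-1} \ge (B^n)^{1/n}(A^n)^{(n-1)/n}$ shows the infimum equals $(A^n)^{(n-1)/n}$, attained at $B = A$; raising to the power $n/(n-1)$ gives $\widehat{\vol}(A^{n-1}) = A^n$. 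Thus Theorem~\ref{thm volversusmob}(2) already yields the lower bound $\mob(A^{n-1}) \ge A^n$ together with the weak upper bound $\mob(A^{n-1}) \le n!\,A^n$, and the entire problem is to remove the factor $n!$ from the upper bound.

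I would next record the lower bound geometrically, both as a sanity check and to pin down the extremal configurations. Writing $m = k^{n-1}$, the complete intersection of $n-1$ general members of $|kA|$ is an effective curve of class $m A^{n-1}$, and forcing all of these members through a fixed set of $b$ points imposes $b$ conditions on each of the $n-1$ divisors. Since $\dim|kA| \sim k^n A^n/n!$, one can take $b \sim k^n A^n /n! = m^{n/(n-1)} A^n/n!$, which recovers $\mob(A^{n-1}) \ge A^n$ after normalization. The substance of the conjecture is therefore that complete intersections are asymptotically optimal: no family of effective $1$-cycles of class $m A^{n-1}$ can be forced through substantially more than $m^{n/(n-1)}A^n/n!$ general points.

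For the upper bound I would try to sharpen the argument behind Theorem~\ref{thm volversusmob}(2). The factor $n!$ measures the gap between the unweighted incidence count defining $\mob$ and the multiplicity-weighted count defining $\wmob$, and by part (1) we know $\wmob(A^{n-1}) = A^n$ exactly. The plan is thus to prove $\mob(A^{n-1}) \le \wmob(A^{n-1})$ for this class, i.e. that allowing the high multiplicities required by $\wmc$ does not asymptotically decrease how many points a cycle of bounded class can cover. Concretely, one would take a family of $1$-cycles of class $m A^{n-1}$ through $b$ general points and attempt to promote it, via a secant/osculation construction or a degeneration argument, into cycles meeting comparably many points with the prescribed multiplicities, thereby feeding the sharp weighted bound back into a bound on $b$. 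An alternative is a direct asymptotic Riemann--Roch estimate on the relevant components of the Chow variety, using the ampleness of $A$ to bound the dimension of the family of cycles through $b$ general points tightly enough to match the complete intersection count.

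The hard part is precisely this upper bound, which is why the statement remains conjectural. The soft, formal methods of the paper---polar transforms and Zariski decomposition structures---are exactly what produce the factor $n!$, and they appear intrinsically unable to remove it: doing so requires genuinely understanding which cycles optimize the mobility count, i.e. proving that the extremal $1$-cycles are (limits of) complete intersections rather than some cleverer reducible or highly non-reduced family. For surfaces ($n=2$) the class $A^{n-1}=A$ is a divisor class and the statement reduces to the classical equality $\mob = \vol$ for big divisors, so the difficulty lies entirely in dimension $\ge 3$, where no control over the Chow variety of comparable strength is presently available.
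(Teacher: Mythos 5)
This statement is Conjecture~\ref{conj mobandciconj}: the paper does not prove it, and states it explicitly as an open problem (citing Question~7.1 of \cite{lehmann14}), so there is no proof in the paper to compare yours against. Your proposal correctly recognizes this and is accurate in every step it actually carries out: the Khovanskii--Teissier computation showing $\widehat{\vol}(A^{n-1}) = A^{n}$ with the infimum attained at $B = A$ is right; the unconditional bounds $A^{n} \leq \mob(A^{n-1}) \leq n!\, A^{n}$ do follow from Theorem~\ref{thm volversusmob}(2), whose lower bound is itself proved in Section~\ref{section mobility} via exactly the complete-intersection count you reproduce ($B^{n} \leq \mob(B^{n-1})$ for nef $B$); and the identity $\wmob(A^{n-1}) = A^{n}$ is indeed known unconditionally. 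Note only that the logical direction in the paper is the reverse of what your opening sentence might suggest: Theorem~\ref{thm volversusmob}(3) \emph{assumes} Conjecture~\ref{conj mobandciconj} in order to deduce $\mob = \widehat{\vol}$ for all classes, so one cannot cite that part toward the conjecture without circularity; you avoid this by using only the unconditional part (2).

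What remains is exactly what you isolate: the sharp upper bound $\mob(A^{n-1}) \leq A^{n}$, i.e.\ removing the factor $n!$. Your two suggested routes (promoting unweighted incidence families to weighted ones so as to exploit $\wmob(A^{n-1}) = A^{n}$, or a direct dimension estimate on the Chow variety) are plausible directions, but neither is carried out, and both face the genuine difficulty that the families extremal for $\mc$ could a priori be reducible or highly non-reduced cycles bearing no relation to complete intersections; the multiplicity-$\mu$ rescaling built into $\wmc$ is precisely what makes the weighted problem tractable via Seshadri-type estimates, and it is exactly this mechanism that is unavailable for $\mob$. So the proposal is a correct and well-calibrated account of what is known and what is missing, but it is not a proof, and the statement remains conjectural.
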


\begin{exmple}
Let $\alpha$ denote the class of a line on $\mathbb{P}^{3}$.  The mobility count of $\alpha$ is determined by the following enumerative question: what is the minimal degree of a curve through $b$ general points of $\mathbb{P}^{3}$?  The answer is unknown, even in an asymptotic sense.

Perrin \cite{perrin87} conjectured that the ``optimal'' curves (which maximize the number of points relative to their degree to the $3/2$) are complete intersections of two divisors of the same degree.  Theorem \ref{thm volversusmob} supports a vast generalization of Perrin's conjecture to all big curve classes on all smooth projective varieties.
\end{exmple}

Similar to the case of curves on algebraic surfaces, Theorem \ref{thm volversusmob} relies on the Zariski decomposition for curves on smooth projective varieties of arbitrary dimension.  
There are two such constructions -- one defined for $\widehat{\vol}$ using convex analysis as in \cite{lehmannxiao2015a}, and the other one defined for $\mob$ using the geometry of cycles as in \cite{fl14}.  The comparison between these two decompositions is the driving force behind Theorem \ref{thm volversusmob}.

\subsection{Polar transform of the volume function for divisors}
An important ingredient in the proof of Theorem \ref{thm volversusmob} is the study of the volume function for divisors from the perspective of convexity theory.  In \cite{lehmannxiao2015a} we described a general theory of convex duality for log-concave homogeneous functions on a cone $\mathcal{C}$.  The key concept is a polar transform, defining a function on the dual cone $\mathcal{C}^{*}$, which plays the role of the Legendre-Fenchel transform in classical convex analysis; we briefly recall related results in Section \ref{section preliminaries}.  The volume function fits into this abstract framework; a posteriori, this viewpoint motivates many of the well-known structure results for the volume function (such as the formula for the derivative, the Khovanskii-Teissier inequalities, the $\sigma$-decomposition, etc.).

The first step is to analyze the strict log concavity of the volume function.  We show that the volume function is strictly log concave on the big and movable cone of divisors (but on no larger cone), extending \cite[Theorem D]{bfj09}.

\begin{thrm}
\label{thm sigma strict logcon}
Let $X$ be a smooth projective variety of dimension $n$.  For any two big divisor classes $L_{1}$, $L_{2}$, we have
\begin{equation*}
\vol(L_{1} + L_{2})^{1/n} \geq \vol(L_{1})^{1/n} + \vol(L_{2})^{1/n}
\end{equation*}
with equality if and only if the (numerical) positive parts $P_{\sigma}(L_{1})$, $P_{\sigma}(L_{2})$ are proportional.  Thus the function $L \mapsto \vol(L)$ is strictly log concave on the cone of big and movable divisors.
\end{thrm}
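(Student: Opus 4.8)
The inequality itself is the Brunn--Minkowski inequality for volumes of big divisors, which is by now classical: it follows, for instance, from the convexity of Okounkov bodies together with the inclusion $\Delta(L_{1}+L_{2}) \supseteq \Delta(L_{1})+\Delta(L_{2})$ and the Brunn--Minkowski inequality for convex bodies, or from the concavity of the positive intersection product. The substance of the theorem is therefore the characterization of the equality case, and the plan is to isolate this on the movable cone and there reduce it to the equality case of a Khovanskii--Teissier type inequality.

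First I would reduce to movable classes. Writing $P_{i} = P_{\sigma}(L_{i})$ and using $\vol(L_{i}) = \vol(P_{i})$ together with the subadditivity of $N_{\sigma}$, which makes $P_{\sigma}(L_{1}+L_{2}) - P_{1} - P_{2}$ pseudo-effective, monotonicity of the volume gives the chain
\[
\vol(L_{1}+L_{2})^{1/n} \geq \vol(P_{1}+P_{2})^{1/n} \geq \vol(P_{1})^{1/n}+\vol(P_{2})^{1/n} = \vol(L_{1})^{1/n}+\vol(L_{2})^{1/n}.
\]
If equality holds in the theorem then every inequality here is an equality; in particular the middle one, which is Brunn--Minkowski for the movable classes $P_{1},P_{2}$, is an equality, so the forward direction follows once I prove strict log concavity on the movable cone. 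For the converse, assuming $P_{1},P_{2}$ proportional I must also force the first inequality to be an equality, i.e. $N_{\sigma}(L_{1}+L_{2}) = N_{\sigma}(L_{1})+N_{\sigma}(L_{2})$. Here I would invoke the orthogonality relation $\langle P_{\sigma}(D)^{n-1}\rangle \cdot \Gamma = 0$ for every component $\Gamma$ of $N_{\sigma}(D)$: it shows every component of $N_{\sigma}(L_{1})+N_{\sigma}(L_{2})$ is orthogonal to the common direction of $P_{1},P_{2}$, so that by the orthogonality characterization of the $\sigma$-decomposition, $(P_{1}+P_{2}) + (N_{\sigma}(L_{1})+N_{\sigma}(L_{2}))$ is already the $\sigma$-decomposition of $L_{1}+L_{2}$.

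The crux is then strict log concavity of $\vol$ on the big and movable cone. By homogeneity, equality in Brunn--Minkowski is equivalent to the concave function $\phi(t) = \vol((1-t)P_{1}+tP_{2})^{1/n}$ being affine on $[0,1]$. Using differentiability of the volume and the formula $\frac{d}{dt}\vol(D_{t}) = n\,\langle D_{t}^{n-1}\rangle \cdot \dot D_{t}$, together with $\langle P_{1}^{n-1}\rangle\cdot P_{1} = \vol(P_{1})$, affineness of $\phi$ computes its slope at $t=0$ and translates exactly into equality in the Khovanskii--Teissier inequality
\[
\langle P_{1}^{n-1}\cdot P_{2}\rangle \geq \vol(P_{1})^{(n-1)/n}\,\vol(P_{2})^{1/n}.
\]
I would then establish the equality case of this inequality --- proportionality --- by proving a Diskant-type inequality for big movable classes, bounding the Brunn--Minkowski deficit below by a nonnegative quantity that vanishes only when $P_{1} \parallel P_{2}$.

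The main obstacle is precisely this equality analysis on the movable cone. Over the nef cone the inequality and its equality case are classical, essentially the Hodge index theorem after cutting down to a surface, and this is the content of the result we extend; the difficulty is that the positive intersection product is only ``movable,'' not represented by genuine nef classes, so cutting down to surfaces is not directly available and the extremal behaviour of the positive product must be controlled intrinsically. I would handle this by Fujita approximation, realizing $P_{1},P_{2}$ as limits of nef classes on birational models, proving the Diskant inequality on each model where Hodge-index techniques apply, and passing to the limit --- the orthogonality of the $\sigma$-decomposition being what guarantees the approximation does not lose the extremal information needed to conclude proportionality.
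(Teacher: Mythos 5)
Your proposal follows essentially the same route as the paper: the reduction to positive parts, the differentiation of $t \mapsto \vol((1-t)P_{1}+tP_{2})^{1/n}$ converting equality in Brunn--Minkowski into equality in the Khovanskii--Teissier inequality $\langle P_{1}^{n-1}\rangle \cdot P_{2} \geq \vol(P_{1})^{(n-1)/n}\vol(P_{2})^{1/n}$, and the proof of the latter's equality case by pushing the Diskant inequality for big and nef classes through Fujita approximation are precisely the content of Proposition \ref{prop teissier movable}, Theorem \ref{thrm teissier} and Proposition \ref{strictlyconvexvol}. The one step you leave compressed --- turning vanishing of the Diskant deficit into actual proportionality, which the paper isolates as Lemma \ref{lemma psefconstantbound} --- is exactly the intended final ingredient.
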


The next step is to analyze the polar transform of the volume function.  This polar transform (denoted by $\mathfrak{M}$) is defined on the movable cone of curves by the main result of \cite{BDPP13}.  It was first defined in \cite{xiao15}.

\begin{defn}(see \cite[Definition 2.2]{xiao15})
Let $X$ be a projective variety of dimension $n$.  For any curve class $\alpha \in \Mov_{1}(X)$ define
\begin{equation*}
\mathfrak{M}(\alpha) = \inf_{L \textrm{ big divisor class}} \left( \frac{L \cdot \alpha}{\vol(L)^{1/n}} \right)^{n/n-1}.
\end{equation*}
We say that a big class $L$ computes $\mathfrak{M}(\alpha)$ if this infimum is achieved by $L$.  When $\alpha$ is a curve class that is not movable, we set $\mathfrak{M}(\alpha)=0$.
\end{defn}



The main structure theorem for $\mathfrak{M}$ relies on a refined version of a theorem of \cite{BDPP13} describing the movable cone of curves.  In \cite{BDPP13}, it is proved that the movable cone $\Mov_1(X)$ is \emph{generated} by $(n-1)$-self positive products of big divisors.  We show that $\Mov_{1}(X)$ is the \emph{closure} of the set of $(n-1)$-self positive products of big divisors on the interior of $\Mov^{1}(X)$.

\begin{thrm} \label{homeothrm}
Let $X$ be a smooth projective variety of dimension $n$ and let $\alpha$ be an interior point of $\Mov_{1}(X)$.  Then there is a unique big movable divisor class $L_\alpha$ lying in the interior of $\Mov^1 (X)$ and depending continuously on $\alpha$ such that
$\langle L_\alpha^{n-1} \rangle = \alpha$.

We then have $\mathfrak{M}(\alpha) = \vol(L_{\alpha})$.
\end{thrm}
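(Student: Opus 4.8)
The plan is to recognize the positive product map $\phi\colon L\mapsto \langle L^{n-1}\rangle$ as a multiple of the gradient of the volume function, and then to extract all three assertions — existence, uniqueness together with continuity, and the formula for $\mathfrak{M}$ — from the strict log concavity of Theorem \ref{thm sigma strict logcon} combined with the polar-duality formalism of \cite{lehmannxiao2015a}. The starting point is the differentiability of the volume function: $\vol$ is $C^1$ on the big cone, with
\[
\frac{d}{dt}\Big|_{t=0}\vol(L+tD) = n\,\langle L^{n-1}\rangle\cdot D,
\]
so that $\langle L^{n-1}\rangle = \tfrac1n\nabla\vol(L)$ and in particular $\phi$ is continuous. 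Writing $v:=\vol^{1/n}$, which is homogeneous of degree $1$, one has $\langle L^{n-1}\rangle = v(L)^{n-1}\,\nabla v(L)$.

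To prove uniqueness I would deduce injectivity of $\phi$ on $\mathrm{int}\,\Mov^1(X)$ from strict concavity. By Theorem \ref{thm sigma strict logcon}, $v$ is strictly concave on the big movable cone, with equality in superadditivity exactly when positive parts are proportional; on $\mathrm{int}\,\Mov^1(X)$ the positive part is the class itself, so strict concavity holds genuinely there. Since $v$ is $1$-homogeneous, $\nabla v$ is constant along rays; and if $\nabla v(L_1)=\nabla v(L_2)$ with $L_1\not\parallel L_2$, then by Euler's relation the supporting linear functional $\xi\mapsto \nabla v(L_1)\cdot\xi$ agrees with $v$ at both $L_1$ and $L_2$, forcing $v$ to be affine on the segment $[L_1,L_2]$ and hence $v(L_1+L_2)=v(L_1)+v(L_2)$, contradicting strict concavity. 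Thus $\phi$ separates distinct rays; as $\phi$ is homogeneous of degree $n-1$ and $t\mapsto t^{n-1}$ is a bijection of $(0,\infty)$, it is injective on $\mathrm{int}\,\Mov^1(X)$, giving uniqueness of $L_\alpha$.

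For existence and continuity I would show $\phi$ is a homeomorphism onto $\mathrm{int}\,\Mov_1(X)$. As $\phi$ is continuous and injective on the open set $\mathrm{int}\,\Mov^1(X)\subseteq N^1(X)$, and $N_1(X)$ has the same dimension, invariance of domain shows $\phi$ is an open map and a homeomorphism onto its (open) image, which lies in $\mathrm{int}\,\Mov_1(X)$ because positive products are movable. It remains to prove surjectivity. Here I would combine this openness with a properness statement: if $L_i\in\mathrm{int}\,\Mov^1(X)$ and $\phi(L_i)\to\alpha\in\mathrm{int}\,\Mov_1(X)$, then the $L_i$ remain in a compact subset of $\mathrm{int}\,\Mov^1(X)$. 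Granting properness, the image is open and closed in the connected set $\mathrm{int}\,\Mov_1(X)$, hence equal to it, and $L_\alpha:=\phi^{-1}(\alpha)$ is the desired continuously varying class. I expect this boundary analysis to be the main obstacle: one must rule out that $L_i$ escapes to the boundary of the big cone (where $\vol\to 0$), to the boundary of the movable cone (where the positive product degenerates), or to infinity, while $\phi(L_i)$ stays interior and bounded. The generation result of \cite{BDPP13} already shows the closure of the image is all of $\Mov_1(X)$; the properness estimates are what upgrade this to the statement that no interior class is missed.

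Finally, the formula $\mathfrak{M}(\alpha)=\vol(L_\alpha)$ is the supporting-hyperplane half of the duality. At $L=L_\alpha$ one has $L_\alpha\cdot\alpha = L_\alpha\cdot\langle L_\alpha^{n-1}\rangle = \vol(L_\alpha)$, so the quantity minimized in the definition of $\mathfrak{M}$ equals $\vol(L_\alpha)$ there. For any other big class $L$, concavity of $v$ together with Euler's relation $\nabla v(L_\alpha)\cdot L_\alpha = v(L_\alpha)$ gives the supporting inequality $v(L)\le \nabla v(L_\alpha)\cdot L$; substituting $\nabla v(L_\alpha)=\alpha/\vol(L_\alpha)^{(n-1)/n}$ rearranges to the Diskant-type bound $L\cdot\alpha\ge \vol(L)^{1/n}\vol(L_\alpha)^{(n-1)/n}$, that is, $\bigl(L\cdot\alpha/\vol(L)^{1/n}\bigr)^{n/(n-1)}\ge \vol(L_\alpha)$. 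Hence the infimum defining $\mathfrak{M}(\alpha)$ is attained at $L_\alpha$ with value $\vol(L_\alpha)$, completing the proof.
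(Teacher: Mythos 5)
Your uniqueness argument and your derivation of $\mathfrak{M}(\alpha)=\vol(L_\alpha)$ are essentially correct and essentially the paper's own: injectivity of $L\mapsto\langle L^{n-1}\rangle$ on big movable classes follows from Teissier proportionality / strict log concavity (this is Corollary \ref{cor mov-intersection after}, via Proposition \ref{prop teissier movable}), and the supporting-hyperplane inequality $L\cdot\alpha\ge\vol(L)^{1/n}\vol(L_\alpha)^{(n-1)/n}$ is exactly the inequality $(\star)$ of Proposition \ref{prop teissier movable}. One small repair in the uniqueness step: $\phi(L_1)=\phi(L_2)$ gives only that $\nabla v(L_1)$ and $\nabla v(L_2)$ are \emph{proportional}; you must first deduce $v(L_1)=v(L_2)$ (e.g.\ from $\vol(L_1)=\langle L_2^{n-1}\rangle\cdot L_1\ge\vol(L_2)$ and symmetry) before your "equal gradients" argument applies.

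The genuine gap is the existence and continuity of $L_\alpha$, which you reduce to a properness claim ("if $\langle L_i^{n-1}\rangle\to\alpha$ interior then the $L_i$ stay in a compact subset of the interior of $\Mov^1(X)$") and then explicitly do not prove. This is not a routine verification: one must simultaneously rule out $\vol(L_i)\to 0$, escape to the non-big or non-movable boundary (where $\langle-^{n-1}\rangle$ is only upper semi-continuous, so a limit of the $L_i$ need not have positive product equal to $\alpha$), and escape to infinity. Your fallback — that \cite{BDPP13} "already shows the closure of the image is all of $\Mov_1(X)$" — misreads that result: BDPP shows $\Mov_1(X)$ is the closed \emph{convex cone generated by} such products, not the closure of the set of products; upgrading "generated by" to "closure of" is precisely the content of the theorem you are proving, so it cannot be assumed. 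The paper avoids properness altogether by running the polar-transform formalism of Section \ref{legendresection} with $\mathcal{C}=\Mov^1(X)$, $f=\vol$: the general theory (Theorem \ref{strong zariski equivalence}) produces a minimizer $L_\alpha\in G_\alpha$ and a decomposition $\alpha=\langle L_\alpha^{n-1}\rangle+n_\alpha$ with $n_\alpha\in\Mov^1(X)^*$; a first-order perturbation of the minimizing property in the direction of an arbitrary pseudo-effective divisor $E$ shows $n_\alpha\cdot E\ge 0$, hence $n_\alpha\in\Eff^1(X)^*=\Mov_1(X)$; and then $n_\alpha\cdot L_\alpha=0$ with $L_\alpha$ big forces $n_\alpha=0$. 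Continuity of $\alpha\mapsto L_\alpha$ then comes from the general continuity of positive parts (Theorem \ref{thrm positive part continuity}) rather than from inverting a proper open map. If you want to keep your topological route you must actually prove the compactness estimates; otherwise the argument via the vanishing of the negative part is the efficient way to close the gap.
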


Furthermore, we show that $\mathfrak{M}$ also admits an enumerative interpretation.  We define $\mob_{mov}$ and $\wmob_{mov}$ for curve classes analogously to $\mob$ and $\wmob$, except that we only count contributions of families whose general member is a sum of irreducible movable curves.

\begin{thrm} 
Let $X$ be a smooth projective variety of dimension $n$ and let $\alpha \in \Mov_{1}(X)^{\circ}$.  Then:
\begin{enumerate}
\item $\mathfrak{M}(\alpha) = \wmob_{mov}(\alpha)$.
\item Assume Conjecture \ref{conj mobandciconj}.  Then $\mathfrak{M}(\alpha) = \mob_{mov}(\alpha)$.
\end{enumerate}
\end{thrm}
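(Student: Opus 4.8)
The plan is to establish both equalities by matching upper and lower bounds, using Theorem \ref{homeothrm} to replace $\mathfrak{M}(\alpha)$ throughout by $\vol(L_\alpha)$, where $L_\alpha$ is the unique big movable divisor with $\langle L_\alpha^{n-1}\rangle = \alpha$. The conceptual point guiding the whole argument, and the reason the \emph{movable} mobility functions pair with $\mathfrak{M}$ (an infimum over all big divisors) rather than with $\widehat{\vol}$ (an infimum over big and nef classes), is the duality of \cite{BDPP13}: a sum of irreducible movable curves has nonnegative intersection against every pseudo-effective divisor. The overall shape of the proof parallels that of Theorem \ref{thm volversusmob}, with the movability hypothesis on the families playing the role that nefness of the test divisor plays there; the polar transform formalism recalled in Section \ref{section preliminaries} organizes the passage between the two.

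For the upper bounds $\wmob_{mov}(\alpha) \leq \mathfrak{M}(\alpha)$ and $\mob_{mov}(\alpha) \leq \mathfrak{M}(\alpha)$, I would fix an arbitrary big divisor class $L$ and a family whose general member is a sum of irreducible movable curves passing through $b$ general points (with multiplicity $\mu$ at each point in the weighted case). Writing $L = P + N$ for its divisorial Zariski decomposition, movability of the members gives $N \cdot \alpha \geq 0$, so $L \cdot \alpha \geq P \cdot \alpha$, while $\vol(L) = \vol(P)$. Passing to a birational model on which $P$ becomes nef, blowing up the points, and applying the Seshadri-constant estimate as in the proof of Theorem \ref{thm volversusmob} bounds $b$ in terms of $P \cdot \alpha$ and $\vol(P)$, hence in terms of $L \cdot \alpha$ and $\vol(L)$. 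Taking the infimum over all big $L$ yields $\mathfrak{M}(\alpha)$; this is the step where restricting to movable families is essential, since it is precisely $N \cdot \alpha \geq 0$ that licenses testing against every big divisor rather than only the nef ones.

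For the lower bounds I would construct explicit movable families realizing the count. Applying a Fujita approximation to $L_\alpha$, I pass to a model $\pi \colon X' \to X$ on which the positive part is approximated by an ample class $A$, intersect $n-1$ general members of $|mA|$, and push the resulting curves forward to $X$. These are sums of irreducible movable curves, so they are admissible for $\wmob_{mov}$ and $\mob_{mov}$, and their class tends to $\langle L_\alpha^{n-1}\rangle = \alpha$. The weighted count of such complete intersections through general points is governed by the Seshadri constants of $A$ and asymptotically achieves $\vol(A) \to \vol(L_\alpha)$, which combined with the upper bound proves part (1). For part (2), the same construction gives $\mob_{mov}(\alpha) \geq \mathfrak{M}(\alpha)$ once the unweighted count of complete intersections of $A$ is known to reach $A^n = \vol(A)$, which is exactly the content of Conjecture \ref{conj mobandciconj}; assuming it, equality follows.

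The main obstacle is the lower bound. I must control the error introduced by the Fujita approximation so that the asymptotic point-count of the constructed complete intersections genuinely converges to $\vol(L_\alpha)$, and I must simultaneously certify that the pushed-forward families really do consist of sums of irreducible movable curves, so that they are counted by the restricted functions rather than only by $\wmob$ and $\mob$. Passing from the integral complete-intersection classes on which Conjecture \ref{conj mobandciconj} is phrased to an arbitrary interior point $\alpha$ of $\Mov_1(X)$ requires, in addition, a homogeneity and continuity argument for $\mob_{mov}$, which I would extract from the continuity of the mobility functions established in \cite{lehmann14} together with the continuous dependence of $L_\alpha$ on $\alpha$ from Theorem \ref{homeothrm}.
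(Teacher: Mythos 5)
Your outline of part (1) and of both lower bounds matches the paper's argument: push forward complete intersection families from Fujita approximations of $L_{\alpha}$, use continuity of $\mob_{mov}$ and $\wmob_{mov}$ on $\Mov_1(X)^{\circ}$, and for the weighted upper bound apply the multiplicity estimate of Proposition \ref{multiplicityestimate} componentwise to a strictly movable family. One local correction: you cannot pass to a birational model on which $P_{\sigma}(L)$ ``becomes nef'' --- the positive part of the $\sigma$-decomposition is only big and movable and can merely be approximated from below by nef classes on models --- but this is harmless, since Proposition \ref{multiplicityestimate} is already stated for big and movable classes and bounds $\mathfrak{M}$ directly.

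The genuine gap is in part (2), where you have placed Conjecture \ref{conj mobandciconj} on the wrong side of the comparison. The inequality $\mob(B^{n-1}) \geq B^{n}$ for nef $B$ is known unconditionally (\cite[Theorem 6.11]{lehmann14}, quoted in the proof of Theorem \ref{thm volversusmob}), so the lower bound $\mob_{mov}(\alpha) \geq \mathfrak{M}(\alpha)$ that you extract from pushed-forward complete intersections needs no conjecture; the content of Conjecture \ref{conj mobandciconj} is the reverse inequality $\mob(A^{n-1}) \leq A^{n}$. The conjecture is genuinely needed for the upper bound $\mob_{mov}(\alpha) \leq \mathfrak{M}(\alpha)$, and your proposed route there --- the Seshadri/multiplicity estimate against big divisors --- cannot deliver it: with multiplicity one at each point, Proposition \ref{multiplicityestimate} only gives $b \leq \mathfrak{M}(\beta) + 1$ for a family through $b$ general points, while the mobility normalizes the count by $m^{n/(n-1)}/n!$, so this argument yields only $\mob_{mov}(\alpha) \leq n!\,\mathfrak{M}(\alpha)$, exactly as the same estimate yields only $\mob \leq n!\,\widehat{\vol}$ in Theorem \ref{thm volversusmob}(2). (For the weighted count the normalization carries no $n!$ and the weight-$\mu$ estimate is sharp, which is why part (1) is unconditional.) The paper closes this factor of $n!$ by a different mechanism: Lemma \ref{birneglem} and the strict-transform bookkeeping of \cite{fl14} give $\mob_{mov}(\alpha) \leq \mob(\phi^{*}\alpha)$ for every birational $\phi \colon Y \to X$; Theorem \ref{thm volversusmob}(3) --- which is where the conjecture actually enters --- gives $\mob(\phi^{*}\alpha) = \widehat{\vol}(\phi^{*}\alpha)$; and $\inf_{\phi} \widehat{\vol}(\phi^{*}\alpha) = \mathfrak{M}(\alpha)$ by the argument of Proposition \ref{prop hatvol birational}. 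Your proposal contains no mechanism for bringing the conjecture, which only speaks about complete intersection classes, to bear on an arbitrary strictly movable family, so part (2) is not proved as written.
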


As an interesting corollary of Theorem \ref{homeothrm}, we obtain:

\begin{cor}
Let $X$ be a projective variety of dimension $n$.  Then the rays over classes of irreducible curves which deform to dominate $X$ are dense in $\Mov_{1}(X)$.
\end{cor}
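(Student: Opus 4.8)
The plan is to combine Theorem \ref{homeothrm}, which identifies the interior of $\Mov_1(X)$ with the set of $(n-1)$-positive products $\langle L^{n-1} \rangle$ of big movable divisors, with the Fujita-approximation description of positive products, which realizes each such class as a limit of pushforwards of honest complete intersection curves from birational models. Since $\Mov_1(X)$ is a full-dimensional closed convex cone, its interior is dense in it, so it suffices to produce, for each $\alpha$ in the interior of $\Mov_1(X)$, a sequence of classes of irreducible curves deforming to dominate $X$ whose rays accumulate at the ray over $\alpha$. I will treat the smooth case first, the case $n=1$ being trivial since then $\Mov_1(X) = N_1(X)$ is spanned by the class of $X$ itself.

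Assume $X$ is smooth and $n \geq 2$, and fix $\alpha$ in the interior of $\Mov_1(X)$. By Theorem \ref{homeothrm} there is a big movable divisor $L$ with $\langle L^{n-1} \rangle = \alpha$. By the construction of the positive product via Fujita approximations (see \cite{BDPP13}), there are projective birational morphisms $\pi_k \colon Y_k \to X$ from smooth varieties and ample $\mathbb{Q}$-classes $A_k$ on $Y_k$ such that $\pi_{k*}(A_k^{n-1}) \to \alpha$ in $N_1(X)$. Writing $A_k = \tfrac{1}{m_k} H_k$ with $H_k$ a very ample integral divisor and $m_k \in \mathbb{Z}_{>0}$, let $C_k \subset Y_k$ be the complete intersection of $n-1$ general members of $|H_k|$.

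By Bertini's theorem $C_k$ is an irreducible curve of class $[C_k] = H_k^{n-1} = m_k^{n-1} A_k^{n-1}$, and since the defining members may be chosen through any prescribed general point, the deformations of $C_k$ sweep out $Y_k$; thus $C_k$ is an irreducible curve deforming to dominate $Y_k$. Because $C_k$ is irreducible and not contained in the $\pi_k$-exceptional locus, the restriction $\pi_k|_{C_k}$ is birational onto its image, so $\pi_{k*}[C_k] = [\pi_k(C_k)]$ is the class of an irreducible curve on $X$ whose deformations cover $X = \pi_k(Y_k)$. Consequently $\pi_{k*}(A_k^{n-1}) = m_k^{-(n-1)} [\pi_k(C_k)]$ lies on the ray over an irreducible curve deforming to dominate $X$, and these classes converge to $\alpha$. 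This gives density in the interior, hence in all of $\Mov_1(X)$.

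For a singular projective $X$ one reduces to the smooth case via a resolution $\nu \colon X' \to X$: a general deformation of an irreducible curve dominating $X$ meets the smooth locus, so strict transforms identify irreducible curves deforming to dominate $X$ with their analogues on $X'$, compatibly with the pushforward $\nu_*$ of movable-curve cones, and the smooth case pushes forward to the general statement. The main obstacle is the one technical input that makes Bertini applicable: one must ensure the Fujita-type approximations of $\langle L^{n-1} \rangle$ can be taken through genuinely ample (hence, after scaling, very ample) classes on the models, so that the approximating classes are realized by actual irreducible curves rather than merely recovered as limits of nef classes.
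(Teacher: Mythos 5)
Your argument is correct and follows essentially the same route as the paper: reduce to a resolution, use the structure theorem to write an interior class as $\langle L^{n-1}\rangle$ for a big (rational) divisor class, and realize the positive product as a limit of pushforwards of complete intersections of (very) ample divisors on Fujita-approximating models, which Bertini makes irreducible and dominating. The paper states this in three lines; you have merely filled in the Bertini and pushforward details, and the ``obstacle'' you flag at the end is not one, since ampleness of the approximating classes is built into the definition of the positive product.
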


Theorem \ref{homeothrm} shows that the map $\langle -^{n-1} \rangle$ is a homeomorphism from the interior of the movable cone of divisors to the interior of the movable cone of curves.  Thus, any chamber decomposition of the movable cone of curves naturally induces a decomposition of the movable cone of divisors and vice versa.  We extend this description to the boundary of $\Mov_{1}(X)$.

\begin{thrm}
Let $X$ be a smooth projective variety and let $\alpha$ be a curve class lying on the boundary of $\Mov_{1}(X)$.  Then exactly one of the following alternatives holds:
\begin{itemize}
\item $\alpha = \langle L^{n-1} \rangle$ for a big movable divisor class $L$ on the boundary of $\Mov^{1}(X)$.
\item $\alpha \cdot M = 0$ for a movable divisor class $M$.
\end{itemize}
The homeomorphism between the interiors given by Theorem \ref{homeothrm}
extends to map the big movable divisor classes on the boundary of $\Mov^{1}(X)$ bijectively to the classes of the first type.
\end{thrm}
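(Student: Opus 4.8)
The plan is to split the statement into three parts: (i) the two alternatives are mutually exclusive; (ii) the positive product $L\mapsto\langle L^{n-1}\rangle$ extends to an injection of the big movable classes on $\partial\Mov^1(X)$ into $\partial\Mov_1(X)$ whose image is exactly the classes of the first type; and (iii) every boundary class of $\Mov_1(X)$ is of one of the two types. For mutual exclusivity I would first record that a type-(a) class pairs strictly positively with every nonzero movable divisor. Indeed, if $\alpha=\langle L^{n-1}\rangle$ for a big class $L$, then passing to a Fujita approximation $\pi\colon X'\to X$ with $\pi^{*}L\sim A+E$, $A$ ample, gives $\langle L^{n-1}\rangle\cdot M\ge \pi_{*}(A^{n-1})\cdot M=A^{n-1}\cdot\pi^{*}M>0$ for every nonzero pseudo-effective $M$, since $A^{n-1}$ lies in the interior of $\Mov_{1}(X')$ and $\pi^{*}M\neq 0$. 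Hence a type-(a) class cannot be of type (b).

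For the extension and bijection, I would use that positive products are continuous on the interior of the pseudo-effective cone, so $L\mapsto\langle L^{n-1}\rangle$ is defined and continuous on all big movable classes, boundary ones included, with image in $\Mov_{1}(X)$. The crux is injectivity: if $\langle L_{1}^{n-1}\rangle=\langle L_{2}^{n-1}\rangle=:\alpha$ with $L_{1},L_{2}$ big and movable, then since each $L_{i}$ is movable one has $\langle L_{i}^{n-1}\rangle\cdot L_{i}=\vol(L_{i})$, and the Khovanskii--Teissier inequality (with the equality case supplied by Theorem \ref{thm sigma strict logcon}) yields $\vol(L_{2})=\alpha\cdot L_{2}\ge \vol(L_{1})^{(n-1)/n}\vol(L_{2})^{1/n}$. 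Symmetrizing forces $\vol(L_{1})=\vol(L_{2})$ and equality in Khovanskii--Teissier, so the positive parts $P_{\sigma}(L_{1}),P_{\sigma}(L_{2})$ are proportional; as the $L_{i}$ are movable they equal their positive parts, and the equal volumes give $L_{1}=L_{2}$. Finally, if $L$ is big movable on $\partial\Mov^{1}(X)$ then $\langle L^{n-1}\rangle$ must lie on $\partial\Mov_{1}(X)$: were it interior, Theorem \ref{homeothrm} would produce an interior big movable $L'$ with $\langle (L')^{n-1}\rangle=\langle L^{n-1}\rangle$, and injectivity would force $L=L'$, contradicting $L\in\partial\Mov^{1}(X)$. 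Together with the first paragraph this shows the image consists of type-(a) classes and that the extended map is a bijection onto them.

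For the dichotomy I would fix a nonzero $\alpha\in\partial\Mov_{1}(X)$, choose interior approximants $\alpha_{i}\to\alpha$, and set $L_{i}:=L_{\alpha_{i}}$ from Theorem \ref{homeothrm}, so that $\langle L_{i}^{n-1}\rangle=\alpha_{i}$ and $\alpha_{i}\cdot L_{i}=\vol(L_{i})$. Writing $L_{i}=t_{i}\ell_{i}$ with $\|\ell_{i}\|=1$ and extracting convergent subsequences $\ell_{i}\to\ell_{\infty}\in\Mov^{1}(X)$, I expect the case analysis to be exhaustive: if $t_{i}\to\infty$, then evaluating $\mathfrak{M}$ at a fixed test divisor bounds $\vol(L_{i})=\alpha_{i}\cdot L_{i}=t_{i}(\alpha_{i}\cdot\ell_{i})$ from above, forcing $\alpha\cdot\ell_{\infty}=0$ with $\ell_{\infty}\neq 0$ movable, so $\alpha$ is of type (b); if the $t_{i}$ stay bounded the limit $L=t_{\infty}\ell_{\infty}$ is nonzero (else $\langle L_{i}^{n-1}\rangle\to 0\neq\alpha$), and either $\vol(L)=0$, whence continuity of the intersection pairing gives $\alpha\cdot L=\lim\vol(L_{i})=0$ and again type (b), or $\vol(L)>0$, whence $L$ is big movable, $\langle L^{n-1}\rangle=\alpha$ by continuity on the big cone, and the second paragraph places $L$ on $\partial\Mov^{1}(X)$, giving type (a). The main obstacle I anticipate is precisely the bookkeeping of these degenerations near $\partial\Eff^{1}(X)$---in particular keeping the limiting orthogonal class nonzero and genuinely movable, and ensuring the identity $\langle L^{n-1}\rangle\cdot L=\vol(L)$ and the continuity of positive products are applied only where valid. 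I expect the cleanest route to this limit analysis is the abstract boundary theory for polar transforms of \cite{lehmannxiao2015a}, which should identify the faces of $\Mov_{1}(X)$ on which $\mathfrak{M}$ vanishes with exactly the type-(b) classes.
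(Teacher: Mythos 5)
Your overall architecture (exclusivity, injectivity and boundary-to-boundary, then a limiting argument for the dichotomy) is reasonable, and your injectivity argument is exactly the paper's Corollary \ref{cor mov-intersection after}. But the proof of mutual exclusivity contains a genuine error. The inequality $\langle L^{n-1}\rangle\cdot M\ge \pi_{*}(A^{n-1})\cdot M$ requires the class $\langle L^{n-1}\rangle-\pi_{*}(A^{n-1})$ to pair non-negatively with $M$; that difference is only pseudo-effective as a curve class (the positive product is a least upper bound in $\Eff_1$), and pseudo-effective curve classes are dual to $\Nef^1(X)$, not to $\Mov^1(X)$ or $\Eff^1(X)$. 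So the step fails for movable non-nef $M$, and the conclusion as you state it -- strict positivity against every nonzero \emph{pseudo-effective} $M$ -- would say $\langle L^{n-1}\rangle$ lies in the interior of $\Mov_1(X)=\Eff^1(X)^{*}$, which directly contradicts the theorem you are proving when $L$ is on the boundary (there $\langle L^{n-1}\rangle\cdot D=0$ for $D\in\mathbb{B}_{+}(L)$). The correct mechanism is the paper's Lemma \ref{lem computing m}: if $\alpha\cdot M=0$ for a nonzero movable $M$, then testing $\mathfrak{M}$ against $M+\epsilon A$ and using $\vol(M+\epsilon A)\ge c\,\epsilon^{n-1}$ forces $\mathfrak{M}(\alpha)=0$, whereas $\mathfrak{M}(\langle L^{n-1}\rangle)=\vol(L)>0$; this gives exclusivity. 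A second, smaller gap: in the last case of your dichotomy you conclude $L\in\partial\Mov^1(X)$ from $\alpha\in\partial\Mov_1(X)$ by citing your second paragraph, but that paragraph proves the \emph{converse} implication (boundary $L$ gives boundary $\alpha$). You need ``$L$ interior $\Rightarrow\langle L^{n-1}\rangle$ interior,'' which is not what you showed; it follows from invariance of domain applied to your injective continuous map together with Theorem \ref{homeothrm}, or from the paper's Corollary \ref{cor boundary to boundary}, which proves the equivalence via the characterization $\langle L^{n-1}\rangle\cdot D=0\iff D\in\mathbb{B}_{+}(L)$ and the identification of boundary classes of $\Mov_1(X)$ as those vanishing against extremal rays of $\Eff^1(X)$.

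On the comparison of routes: the paper does not run a limiting argument on the boundary at all. It assembles the theorem from three established pieces -- Lemma \ref{lem computing m} (the locus $\mathfrak{M}=0$ is exactly the type-(b) classes), Theorem \ref{thm mov-intersection after} (every movable $\alpha$ with $\mathfrak{M}(\alpha)>0$, boundary or not, equals $\langle L_\alpha^{n-1}\rangle$ for a unique big movable $L_\alpha$, proved via the formal Zariski decomposition for the polar transform $\mathfrak{M}'$ on $\Mov^1(X)$), and Corollary \ref{cor boundary to boundary}. Your compactness argument on the normalized divisors $L_{\alpha_i}$ for interior approximants $\alpha_i\to\alpha$ is essentially the same device the paper uses inside the proof of Lemma \ref{lem computing m}, transplanted to the boundary dichotomy; it is a workable alternative to invoking Theorem \ref{thm mov-intersection after} on the boundary, provided the two gaps above are repaired.
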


\subsection{Examples}

\cite{lehmannxiao2015a} defines the complete intersection cone $\CI_{1}(X)$ as the closure of the set of curve classes of the form $A^{n-1}$ for an ample divisor class $A$.  While $\CI_{1}(X)$ may not be convex, this cone plays an important role in \cite{lehmannxiao2015a} via the notion of a Zariski decomposition.

\subsubsection{Mori dream spaces}
If $X$ is a Mori dream space, then the movable cone of divisors admits a chamber structure defined via the ample cones on small $\mathbb{Q}$-factorial modifications.  This chamber structure behaves compatibly with the $\sigma$-decomposition and the volume function for divisors.

For curves we obtain a complementary picture using the movable cone of curves.  Note that $\Mov_{1}(X)$ is naturally preserved by small $\mathbb{Q}$-factorial modifications.  We then have a chamber decomposition induced by Theorem \ref{homeothrm}.  A good way to analyze the chambers is to compare the behavior of the two functions $\mathfrak{M}$ and $\widehat{\vol}$ restricted to $\Mov_{1}(X)$.
\begin{itemize}
\item By Theorem \ref{homeothrm}, a curve class in the interior of $\Mov_{1}(X)$ is the $(n-1)$-positive product of a big divisor class $L$ and $\mathfrak{M}(\alpha) = \vol(L)$.  Using the birational invariance of the volume for divisors, we see that $\mathfrak{M}$ is also invariant under small $\mathbb{Q}$-factorial modifications.
\item Using the Zariski decomposition of \cite{lehmannxiao2015a}, the movable cone of curves admits a ``chamber structure'' as a union of the complete intersection cones from small $\mathbb{Q}$-factorial modifications.  However, $\widehat{\vol}$ is not invariant under small $\mathbb{Q}$-factorial modifications but changes to reflect the differing structure of the pseudo-effective cone of curves.
\end{itemize}
We show that $\widehat{\vol}$ reaches its minimum value $\widehat{\vol}(\alpha) = \mathfrak{M}(\alpha)$ precisely on the complete intersection cone of $X$, and then increases on the chambers corresponding to birational models of $X$.  In this way $\widehat{\vol}$ and $\mathfrak{M}$ are the right tools for understanding the birational geometry of curves on Mori dream spaces.

\subsubsection{Toric varieties}

Suppose that $X$ is a simplicial projective toric variety of dimension $n$ defined by a fan $\Sigma$.  A class $\alpha$ in the interior of the movable cone of curves corresponds to a positive Minkowski weight on the rays of $\Sigma$.  A fundamental theorem of Minkowski attaches to such a weight a polytope $P_{\alpha}$ whose facet normals are the rays of $\Sigma$ and whose facet volumes are determined by the weights.  In fact, Minkowski's construction exactly corresponds to the bijection of Theorem \ref{homeothrm}.

\begin{lem}
If $L$ denotes the big and movable divisor class corresponding to the polytope $P_{\alpha}$ then $\langle L^{n-1} \rangle = \alpha$.  Thus $\mathfrak{M}(\alpha) = n! \vol(P_{\alpha})$.
\end{lem}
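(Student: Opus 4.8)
The plan is to reduce everything to the uniqueness part of Theorem \ref{homeothrm}: once I verify directly that $\langle L^{n-1} \rangle = \alpha$, the uniqueness there identifies $L$ with $L_\alpha$, so $\mathfrak{M}(\alpha) = \vol(L_\alpha) = \vol(L)$, and the standard toric volume formula $\vol(L) = n!\,\vol(P_\alpha)$ finishes the lemma. So the entire content is the equality $\langle L^{n-1}\rangle = \alpha$.

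First I would translate the numerical data into toric intersection theory. Let $\rho_1,\dots,\rho_d$ be the rays of $\Sigma$ with primitive generators $u_1,\dots,u_d \in N$, and let $D_1,\dots,D_d$ be the corresponding torus-invariant prime divisors. Dualizing the divisor sequence gives the exact sequence
\[ 0 \longrightarrow N_1(X) \longrightarrow \mathbb{R}^{\Sigma(1)} \longrightarrow N_\mathbb{R} \longrightarrow 0, \]
where the inclusion is $\beta \mapsto (\beta \cdot D_i)_i$ and the surjection is $(c_i)_i \mapsto \sum_i c_i u_i$. This records two facts: a curve class $\beta$ is determined by its intersection numbers $\beta \cdot D_i$, and the tuple $(a_i)_i := (\alpha \cdot D_i)_i$ satisfies the balancing condition $\sum_i a_i u_i = 0$. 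Since $\Mov_1(X)$ is dual to the pseudo-effective cone of divisors, which for a toric variety is $\Cone(D_1,\dots,D_d)$, the hypothesis $\alpha \in \Mov_1(X)^\circ$ gives $a_i > 0$ for all $i$; these are exactly the positive Minkowski weights producing $P_\alpha$.

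Next I would recall Minkowski's construction with the lattice-correct normalization: $P_\alpha$ is the polytope (unique up to translation) whose facet outer normals are the $u_i$ and whose $i$-th facet $F_i$ has lattice $(n-1)$-volume $a_i/(n-1)!$, the normalization that will match the intersection numbers. Because every ray of $\Sigma$ occurs as a genuine facet normal (all $a_i>0$), the associated class $L$ is big and movable: $P_\alpha$ is full-dimensional and its normal fan has the same rays as $\Sigma$, so $L$ is ample on the small $\mathbb{Q}$-factorial modification $X'$ whose fan is that normal fan. Passing to $X'$ preserves $\Mov_1$, the rays, and hence all the numbers $\beta \cdot D_i$, and makes $L$ nef, so $\langle L^{n-1}\rangle$ equals the ordinary product $L^{n-1}$. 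Restricting $L$ to $D_i$ identifies the polytope of $L|_{D_i}$ with $F_i$, and the toric volume formula on the $(n-1)$-dimensional variety $D_i$ yields $L^{n-1}\cdot D_i = (n-1)!\,\vol_{n-1}(F_i) = a_i = \alpha \cdot D_i$. As this holds for every $i$, the injectivity from the exact sequence forces $\langle L^{n-1}\rangle = \alpha$, and the reduction above completes the proof.

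I expect the main obstacle to be the bookkeeping of normalizations: matching Minkowski's facet volumes (a priori Euclidean, with respect to unit normals) to the lattice-normalized facet volumes appearing in toric intersection numbers, and carrying the factor $(n-1)!$ consistently, so that the facet data of $P_\alpha$ really reproduce the $a_i$. Secondary care is needed to confirm that $L$ is genuinely big and movable and that passing to $X'$ legitimately reduces the positive product $\langle L^{n-1}\rangle$ to an honest intersection product.
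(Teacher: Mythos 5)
Your proof is correct, but it takes a genuinely different route from the paper's (the statement is proved in the body as Theorem \ref{toricmfunction}). The paper stays on $X$ throughout: it shows that for \emph{every} big movable divisor $L$ with polytope $Q_L$ one has $L\cdot\alpha = n!\,V(P_\alpha^{n-1},Q_L)$, via the facet formula $V(P_\alpha^{n-1},Q_L)=\frac{1}{n}\sum_u h_{Q_L}(u)\vol(P_\alpha^u)$, and then evaluates the infimum defining $\mathfrak{M}(\alpha)$ using the Brunn--Minkowski inequality together with its equality case; the minimizers are exactly the divisors whose polytope is homothetic to $P_\alpha$, which yields $\mathfrak{M}(\alpha)=n!\vol(P_\alpha)$ and, combined with the uniqueness in Theorem \ref{thm mov-intersection after}, the identity $\langle L_\alpha^{n-1}\rangle=\alpha$. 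You instead verify $\langle L^{n-1}\rangle=\alpha$ head-on by computing intersection numbers against all the $T$-invariant divisors on the model where $L$ becomes ample, and outsource everything else to Theorem \ref{homeothrm}. Your normalization does agree with the paper's (its Euclidean facet volume $t_i|v_i|/(n-1)!$ taken with respect to unit normals equals your lattice facet volume $a_i/(n-1)!$ taken with respect to primitive normals). What your route buys is a purely toric, facet-by-facet verification; what the paper's buys is that the polar transform defining $\mathfrak{M}$ is exhibited literally as Brunn--Minkowski duality, which makes the identification of the minimizing divisor and Corollary \ref{toriccicor} transparent.

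Two points in your write-up deserve more care, though neither is a fatal gap. First, the normal fan of $P_\alpha$ need not be simplicial, so $X'$ is a small modification of $X$ but not necessarily a small $\mathbb{Q}$-factorial one; this is harmless since $L$ is still ample and $\mathbb{R}$-Cartier on $X'$ and the facet-volume formula for $L^{n-1}\cdot D_i$ does not require $\mathbb{Q}$-factoriality. Second, the assertion that $\langle L^{n-1}\rangle$ computed on $X$ is given by the honest product $L'^{\,n-1}$ on $X'$ (paired with the strict transforms $D_i'$) is the real content of that step and should be justified: either quote the mixed-volume formula for positive products recorded at the start of Section \ref{toric section}, or argue on a common toric resolution $p\colon W\to X$, $q\colon W\to X'$ that $P_\sigma(p^*L)=q^*L'$ and that $q_*p^*D_i=D_i'$ because every $p$-exceptional ray is also $q$-exceptional, whence $\langle L^{n-1}\rangle\cdot D_i = L'^{\,n-1}\cdot D_i'$.
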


When $\alpha$ happens to be in the complete intersection cone, this quantity also agrees with $\widehat{\vol}(\alpha)$.


\subsection{Outline of paper} \label{outlinesec}

In this paper we will work with projective varieties over $\mathbb{C}$, but related results can be also adjusted to arbitrary algebraically closed fields and compact K\"ahler manifolds. We give a general framework for this extension in Section \ref{section preliminaries}.
In Section \ref{section preliminaries} we briefly recall the general duality framework in \cite{lehmannxiao2015a}, and explain how the proofs
can be adjusted to arbitrary algebraically closed fields and compact hyperk\"ahler manifolds.
In Section \ref{section movcone}, we give a refined structure of the movable cone of curves and generalize several results on big and nef divisors to big and movable divisors. Section \ref{toric section} discusses toric varieties, showing some relationships with convex geometry. Section \ref{ci and mov section} compares the complete intersection and movable cone of curves.  In Section \ref{section mobility} we compare the mobility function and $\widehat{\vol}$, finishing the proof of the main result.

\subsection*{Acknowledgements}
We thank M.~Jonsson for his many helpful comments.  Some of the material on toric varieties was worked out in a conversation with J.~Huh, and we are very grateful for his help.  Lehmann would like to thank C.~Araujo, M.~Fulger, D.~Greb, R.~Lazarsfeld, S.~Payne, D.~Treumann, and D.~Yang for helpful conversations.  Xiao would like to thank his supervisor J.-P.~Demailly for suggesting an intersection-theoretic approach to study volume function, and thank W.~Ou for helpful conversations, and thank the China Scholarship Council for the support.

\section{Preliminaries}
\label{section preliminaries}

\subsection{Positivity cones}
In this section, we first fix some notations over a projective variety $X$:
\begin{itemize}
\item $N^1(X)$: the real vector space of numerical classes of divisors;
\item $N_1(X)$: the real vector space of numerical classes of curves;
\item $\Eff^1(X)$: the cone of pseudo-effective divisor classes;
\item $\Nef^1(X)$: the cone of nef divisor classes;
\item $\Mov^1(X)$: the cone of movable divisor classes;
\item $\Eff_1(X)$: the cone of pseudo-effective curve classes;
\item $\Mov_1(X)$: the cone of movable curve classes, equivalently by \cite{BDPP13} the dual of $\Eff^{1}(X)$;
\item $\CI_1(X)$: the closure of the set of all curve classes of the form $A^{n-1} $ for an ample divisor $A$.
\end{itemize}

With only a few exceptions, capital letters $A,B,D,L$ will denote $\mathbb{R}$-Cartier divisor classes and greek letters $\alpha,\beta,\gamma$ will denote curve classes.  For two curve classes $\alpha, \beta$, we write $\alpha\succeq \beta$ (resp. $\alpha\preceq \beta$) to denote that $\alpha-\beta$ (resp. $\beta-\alpha$) belongs to $\Eff_{1}(X)$.  We will do similarly for divisor classes, or two elements of a cone $\mathcal{C}$ if the cone is understood.

We will use the notation $\langle - \rangle$ for the positive product as in \cite{BDPP13}, \cite{bfj09} and \cite{Bou02}. We make a few remarks on this construction for singular projective varieties.  Suppose that $X$ has dimension $n$.  Then $N_{n-1}(X)$ denotes the vector space of $\mathbb{R}$-classes of Weil divisors up to numerical equivalence as in \cite[Chapter 19]{fulton84}.  In this setting, the $1$st and $(n-1)$st positive product should be interpreted respectively as maps $\Eff^{1}(X) \to N_{n-1}(X)$ and $\Eff^{1}(X)^{\times n-1} \to \Mov_{1}(X)$.  We will also let $P_{\sigma}(L)$ denote the positive part in this sense -- that is, pullback $L$ to better and better Fujita approximations, take its positive part, and push the numerical class forward to $X$ as a numerical Weil divisor class.  With these conventions, we still have the crucial result of \cite{bfj09} and \cite{lm09} that the derivative of the volume is controlled by intersecting against the positive part.

We define the movable cone of divisors $\Mov^{1}(X)$ to be the subset of $\Eff^{1}(X)$ consisting of divisor classes $L$ such that $N_{\sigma}(L) = 0$ and $P_{\sigma}(L) = L \cap [X]$.    On any projective variety, by \cite[Example 19.3.3]{fulton84} capping with $X$ defines an injective linear map $N^{1}(X) \to N_{n- 1}(X)$.
Thus if $D,L \in \Mov^{1}(X)$ have the same positive part in $N_{n-1}(X)$, then by the injectivity of the capping map we must have $D=L$.

To extend our results (especially the results in Section \ref{section movcone}) to arbitrary compact K\"ahler manifolds, we need to deal with transcendental objects which are not given by divisors or curves. Let $X$ be a compact K\"ahler manifold of dimension $n$.  By analogue with the projective situation, we need to deal with the following spaces and positive cones:
\begin{itemize}
\item $H^{1,1}_{\BC}(X, \mathbb{R})$: the real Bott-Chern cohomology group of bidegree $(1,1)$;
\item $H^{n-1,n-1}_{\BC}(X, \mathbb{R})$: the real Bott-Chern cohomology group of bidegree $(n-1,n-1)$;
\item $\mathcal{N}(X)$: the cone of pseudo-effective $(n-1,n-1)$-classes;
\item $\mathcal{M}(X)$: the cone of movable $(n-1,n-1)$-classes;
\item $\overline{\mathcal{K}}(X)$: the cone of nef $(1,1)$-classes, equivalently the closure of the K\"ahler cone;
\item $\mathcal{E}(X)$:  the cone of pseudo-effective $(1,1)$-classes.
\end{itemize}
Recall that we call a Bott-Chern class pseudo-effective if it contains a $d$-closed positive current, and call an $(n-1,n-1)$-class movable if it is contained in the closure of the cone generated by the classes of the form $\mu_{*}(\widetilde{\omega}_1 \wedge...\wedge \widetilde{\omega}_{n-1})$ where $\mu: \widetilde{X}\rightarrow X$ is a modification and $\widetilde{\omega}_1,...,\widetilde{\omega}_{n-1}$ are K\"ahler metrics on $\widetilde{X}$. For the basic theory of positive currents, we refer the reader to \cite{Dem}.

\subsection{Fields of characteristic $p$}
\label{charp background sec}
Almost all the results in the paper will hold for smooth varieties over an arbitrary algebraically closed field.  The necessary technical generalizations are verified in the following references:
\begin{itemize}
\item The existence of Fujita approximations over an arbitrary algebraically closed field is proved in \cite{takagi07}.
\item The basic properties of the $\sigma$-decomposition in positive characteristic are considered in \cite{mustata11}.
\item The results of \cite{Cut13} lay the foundations of the theory of positive products and volumes over an arbitrary field.
\item \cite{fl14} describes how to extend \cite{BDPP13} and most of the results of \cite{bfj09} over an arbitrary algebraically closed field.  In particular the description of the derivative of the volume function in \cite[Theorem A]{bfj09} holds for smooth varieties in any characteristic.
\end{itemize}

\subsection{Compact K\"ahler manifolds}
\label{kahler background sec}
The following results enable us to extend our results in Section \ref{section movcone} and Section \ref{ci and mov section} to arbitrary compact hyperk\"ahler manifolds.
\begin{itemize}
\item The theory of positive intersection products for pseudo-effective $(1,1)$-classes has been developed by \cite{Bou02, BDPP13, BEGZ10MAbig}.
\item Divisorial Zariski decomposition for pseudo-effective $(1,1)$-classes has been studied in \cite{Bou04, BDPP13}.
\item By \cite[Theorem 10.12]{BDPP13}, the transcendental analogues of the results in \cite{bfj09, BDPP13} are true for compact hyperk\"ahler manifolds. In particular, we have the cone duality $\mathcal{E}^* = \mathcal{M}$ and the description of the derivative of the volume for pseudo-effective $(1,1)$-classes.
\end{itemize}

\subsection{Polar transforms}  \label{legendresection}
As explained in the introduction, our results use convex analysis, and in particular a Legendre-Fenchel type transform for functions defined on a cone.  We briefly recall some definitions and results from \cite{lehmannxiao2015a} which will be used to study the function $\mathfrak{M}$.

\subsubsection{Duality transforms}
Let $V$ be a finite-dimensional $\mathbb{R}$-vector space of dimension $n$,  and let $V^{*}$ be its dual. We denote the pairing of $w^* \in V^*$ and $v \in V$ by $w^* \cdot v$.  Let $\mathcal{C}\subset V$ be a proper closed convex cone of full dimension and let $\mathcal{C}^* \subset V^*$ denote the dual cone of $\mathcal{C}$.
We let $\HConc_s (\mathcal{C})$ denote the collection of functions $f: \mathcal{C} \to \mathbb{R}$ satisfying:
\begin{itemize}
  \item $f$ is upper-semicontinuous and homogeneous of weight $s>1$;
  \item $f$ is strictly positive in the interior of $\mathcal{C}$ (and hence non-negative on $\mathcal{C}$);
  \item $f$ is $s$-concave: for any $v,x \in \mathcal{C}$ we have $f(v)^{1/s} + f(x)^{1/s} \leq f(v+x)^{1/s}$.
\end{itemize}

The polar transform $\mathcal{H}$ associates to a function $f \in \HConc_{s}(\mathcal{C})$ the function $\mathcal{H}f: \mathcal{C}^{*} \to \mathbb{R}$ defined as
\begin{align*}
\mathcal{H} f (w^*):= \inf_{v\in \mathcal{C}^{\circ}} \left(\frac{w^* \cdot v}{f(v)^{1/s}}\right)^{s/s-1}.
\end{align*}
The definition is unchanged if we instead vary $v$ over all elements of $\mathcal{C}$ where $f$ is positive.  It is not hard to see that $\mathcal{H}^{2}f = f$ for any $f \in \HConc_{s}(\mathcal{C})$.



It will be crucial to understand which points obtain the infimum in the definition of $\mathcal{H}f$.

\begin{defn}
Let $f \in \HConc_{s}(\mathcal{C})$.  For any $w^* \in \mathcal{C}^{*}$, we define $G_{w^*}$ to be the set of all $v \in \mathcal{C}$ which satisfy $f(v)>0$ and which achieve the infimum in the definition of $\mathcal{H}f(w^*)$, so that
\begin{align*}
\mathcal{H} f (w^*) = \left( \frac{w^* \cdot v}{f(v)^{1/s}} \right)^{s/s-1}.
\end{align*}
\end{defn}

\begin{rmk}
The set $G_{w^*}$ is the analogue of supergradients of concave functions. In particular, we know the differential of $\mathcal{H}f$ at $w^*$ lies in $G_{w^*}$ if $\mathcal{H}f$ is differentiable.
\end{rmk}

We next identify the collection of points where $f$ is controlled by $\mathcal{H}$.

\begin{defn}
Let $f \in \HConc_{s}(\mathcal{C})$.  We define $\mathcal{C}_{f}$ to be the set of all $v \in \mathcal{C}$ such that $v \in G_{w^*}$ for some $w^{*} \in \mathcal{C}$ satisfying $\mathcal{H}f(w^*)>0$.
\end{defn}




We say that $f \in \HConc_{s}(\mathcal{C})$ is differentiable if it is $\mathcal{C}^{1}$ on $\mathcal{C}^{\circ}$.  In this case we define the function
\begin{align*}
D: \mathcal{C}^{\circ}  \to V^{*} \qquad \qquad \textrm{by} \qquad \qquad
v  \mapsto \frac{Df(v)}{s}.
\end{align*}
We will need to understand the behaviour of the derivative along the boundary.






\begin{defn}
We say that $f \in \HConc_{s}(\mathcal{C})$ is $+$-differentiable if $f$ is $\mathcal{C}^{1}$ on $\mathcal{C}^{\circ}$
and the derivative on $\mathcal{C}^{\circ}$ extends to a continuous function on all of $\mathcal{C}_{f}$.
\end{defn}


\begin{rmk} \label{extensiontoboundaryrmk}
For $+$-differentiable functions $f$, we define the function $D: \mathcal{C}_{f} \to V^{*}$ by extending continuously from $\mathcal{C}^{\circ}$.
\end{rmk}

\subsection{Teissier proportionality and strict log concavity}
In \cite{lehmannxiao2015a}, we gave some conditions which are equivalent to the strict log concavity.

\begin{defn}
Let $f \in \HConc_{s}(\mathcal{C})$ be $+$-differentiable and let $\mathcal{C}_{T}$ be a non-empty subcone of $\mathcal{C}_{f}$.  We say that $f$ satisfies Teissier proportionality with respect to $\mathcal{C}_{T}$ if for any $v,x \in \mathcal{C}_{T}$ satisfying
\begin{equation*}
D(v) \cdot x = f(v)^{s-1/s} f(x)^{1/s}
\end{equation*}
we have that $v$ and $x$ are proportional.
\end{defn}

Note that we do not assume that $\mathcal{C}_{T}$ is convex -- indeed, in examples it is important to avoid this condition.  However, since $f$ is defined on the convex hull of $\mathcal{C}_{T}$, we can (somewhat abusively) discuss the strict log concavity of $f|_{\mathcal{C}_{T}}$:

\begin{defn} \label{defn generalized log concave}
Let $\mathcal{C}' \subset \mathcal{C}$ be a (possibly non-convex) subcone.  We say that $f$ is strictly log concave on $\mathcal{C}'$ if
\begin{align*}
f(v)^{1/s} + f(x)^{1/s}< f(v+x)^{1/s}
\end{align*}
holds whenever $v, x\in \mathcal{C}'$ are not proportional.  Note that this definition makes sense even when $\mathcal{C}'$ is not itself convex.
\end{defn}

\begin{thrm}
\label{thrm teissier}
Let $f \in \HConc_{s}(\mathcal{C})$ be $+$-differentiable.  For any non-empty subcone $\mathcal{C}_{T}$ of $\mathcal{C}_{f}$, consider the following conditions:
\begin{enumerate}
\item The restriction $f|_{\mathcal{C}_{T}}$ is strictly log concave (in the sense defined above).
\item $f$ satisfies Teissier proportionality with respect to $\mathcal{C}_{T}$.
\item The restriction of $D$ to $\mathcal{C}_{T}$ is injective.
\end{enumerate}
Then we have (1) $\implies$ (2) $\implies$ (3).  If $\mathcal{C}_{T}$ is convex, then we have (2) $\implies$ (1).  If $\mathcal{C}_{T}$ is an open subcone, then we have (3) $\implies$ (1).
\end{thrm}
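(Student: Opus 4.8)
The plan is to reduce everything to a single Khovanskii--Teissier type inequality together with a sharp description of its equality case, and then read off the three implications by elementary manipulations. Write $g = f^{1/s}$, so that $g$ is concave and homogeneous of degree $1$ on $\mathcal{C}$ and $Dg(v) = f(v)^{(1-s)/s} D(v)$ on $\mathcal{C}^{\circ}$. Euler's identity gives $D(v) \cdot v = f(v)$, i.e. $Dg(v) \cdot v = g(v)$, and the supergradient inequality for the concave function $g$ reads $g(x) \le Dg(v)\cdot x$ for all $x \in \mathcal{C}$. Combining these yields the basic inequality
\[
D(v)\cdot x \ \ge\ f(v)^{\frac{s-1}{s}}\, f(x)^{\frac1s},
\]
first on $\mathcal{C}^{\circ}$; since $f$ is $+$-differentiable and $f>0$ on $\mathcal{C}_f$, both Euler's identity and the supergradient inequality persist on all of $\mathcal{C}_f$ by continuously extending $D$. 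This is the inequality whose equality case is governed by Teissier proportionality.

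The heart of the matter is the equivalence, for $v,x \in \mathcal{C}_f$, of the two conditions (a) $D(v)\cdot x = f(v)^{(s-1)/s} f(x)^{1/s}$ and (b) $g(v+x) = g(v)+g(x)$. The implication (a) $\Rightarrow$ (b) is immediate: superadditivity gives $g(v+x) \ge g(v)+g(x)$, while the supergradient inequality at $v$ together with Euler gives $g(v+x) \le Dg(v)\cdot(v+x) = g(v) + g(x)$. For (b) $\Rightarrow$ (a) I would argue that equality in superadditivity forces $g$ to be linear along the segment from $v$ to $x$ (a concave function agreeing with its chord at the midpoint is affine on the whole segment), hence, by homogeneity, linear on the two-dimensional cone they span; differentiating the resulting identity $g(v+tx) = g(v)+t\,g(x)$ and using the continuity of the extended derivative then yields $Dg(v)\cdot x = g(x)$, which is (a).

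With this equivalence in hand the implications are short. For (1) $\Rightarrow$ (2): if $v,x \in \mathcal{C}_T$ realize the Teissier equality, then (b) holds, contradicting strict log concavity unless $v,x$ are proportional. For (2) $\Rightarrow$ (3): if $D(v)=D(w)$ with $v,w\in\mathcal{C}_T$, applying the basic inequality to both orderings of the pair and using $D(v)\cdot v = f(v)$, $D(w)\cdot w = f(w)$ forces $f(v)=f(w)$ and equality in the inequality; Teissier proportionality then gives $w=\lambda v$, and homogeneity of $f$ forces $\lambda = 1$. For (2) $\Rightarrow$ (1) when $\mathcal{C}_T$ is convex: given non-proportional $v,x\in\mathcal{C}_T$ with $g(v+x)=g(v)+g(x)$, the equivalence yields the Teissier equality, contradicting (2); here convexity is what keeps the segment $[v,x]$, and in particular the interior points needed to run the differentiation argument, inside $\mathcal{C}_f$. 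For (3) $\Rightarrow$ (1) when $\mathcal{C}_T$ is open: equality $g(v+x)=g(v)+g(x)$ makes $g$ linear on the cone spanned by $v,x$, so $v$ and $x$ both minimize $y\mapsto Dg(v)\cdot y / g(y)$; the first-order optimality condition at the interior point $x$ forces $Dg(x)$ to be proportional to $Dg(v)$, hence $D(x)$ proportional to $D(v)$, and since $D$ is homogeneous of degree $s-1$, injectivity of $D$ on the cone $\mathcal{C}_T$ makes $v,x$ proportional.

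The main obstacle I expect is the equality analysis at the boundary, i.e. the direction (b) $\Rightarrow$ (a) and the optimality characterization of $G_{w^*}$ when $v$ or $x$ lies on $\partial\mathcal{C}$. On the interior everything is classical smooth convex analysis, but the potential non-differentiability of $g$ on the boundary is precisely what obstructs a uniform argument, and this is exactly why the two converses carry extra hypotheses. Convexity of $\mathcal{C}_T$ is used to guarantee that the relevant segment lies in $\mathcal{C}_f$, so that the continuous extension of $D$ supplied by $+$-differentiability can be applied along it; openness of $\mathcal{C}_T$ is used to place $v,x$ in $\mathcal{C}^{\circ}$, where $g$ is genuinely $\mathcal{C}^1$ and the first-order condition identifying $G_{w^*}$ with the supergradient direction is an honest equality rather than an inequality.
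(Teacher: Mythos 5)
Your overall strategy---reducing everything to the Khovanskii--Teissier type inequality $D(v)\cdot x \ge f(v)^{(s-1)/s}f(x)^{1/s}$ and analysing its equality case---is the right one; it is essentially the route of \cite{lehmannxiao2015a}, from which this paper only quotes the statement without proof. Your arguments for (1)$\implies$(2), (2)$\implies$(3) and (3)$\implies$(1) are correct: the (a)$\implies$(b) half of your equivalence needs only the supergradient inequality at $v$, which does persist on $\mathcal{C}_f$ by the limiting argument you sketch (note $f>0$ on $\mathcal{C}_f$ and $f$ is continuous along segments from an interior point), and in the open case both points lie in $\mathcal{C}^{\circ}$ where $g=f^{1/s}$ is genuinely $\mathcal{C}^1$, so the first-order optimality argument is sound.

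The gap is in (b)$\implies$(a), hence in your proof of (2)$\implies$(1). You deduce $Dg(v)\cdot x=g(x)$ by differentiating $g(v+tx)=g(v)+tg(x)$ at $t=0$ ``using the continuity of the extended derivative.'' When $v\in\mathcal{C}_f$ lies on $\partial\mathcal{C}$ this does not follow: the continuous extension of $Dg$ at $v$ is only \emph{one} supergradient of the concave function $g$ there, and for any supergradient $\sigma$ one has $g'(v;x)\le \sigma\cdot x$; so from $g'(v;x)=g(x)$ you recover only $g(x)\le Dg(v)\cdot x$, which is the basic inequality again rather than the needed equality. Convexity of $\mathcal{C}_T$ does not repair this as you invoke it, since the relative interior of the segment $[v,x]$ may still lie entirely in $\partial\mathcal{C}$, where $g$ need not be differentiable. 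The correct use of convexity avoids differentiating along the segment altogether: apply Euler's identity at $v+x\in\mathcal{C}_T\subset\mathcal{C}_f$, namely $Dg(v+x)\cdot(v+x)=g(v+x)=g(v)+g(x)$, together with the basic inequality based at $v+x$, namely $Dg(v+x)\cdot v\ge g(v)$ and $Dg(v+x)\cdot x\ge g(x)$. The two lower bounds sum to the exact value, so both are equalities; these are precisely the Teissier equalities for the pairs $(v+x,v)$ and $(v+x,x)$, and Teissier proportionality applied to each pair forces $v$ and $x$ to be proportional to $v+x$, hence to each other. With this substitution your proof is complete.
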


\subsection{Sublinear boundary conditions}

Under certain conditions we can control the behaviour of $\mathcal{H}f$ near the boundary, and thus obtain the continuity.

\begin{defn}
Let $f \in \HConc_{s}(\mathcal{C})$ and let $\alpha \in (0,1)$.  We say that $f$ satisfies the sublinear boundary condition of order $\alpha$ if for any non-zero $v$ on the boundary of $\mathcal{C}$ and for any $x$ in the interior of $\mathcal{C}$, there exists a constant $C:=C(v, x)>0$ such that $f(v+\epsilon x)^{1/s}\geq C\epsilon^\alpha$.
\end{defn}

Note that the condition is always satisfied at $v$ if $f(v)>0$.  Furthermore, the condition is satisfied for any $v,x$ with $\alpha = 1$ by homogeneity and log-concavity, so the crucial question is whether we can decrease $\alpha$ slightly.

Using this sublinear condition, we get the vanishing of $\mathcal{H}f$ along the boundary.

\begin{prop} \label{sublinearcontinuity}
Let $f \in \HConc_{s}(\mathcal{C})$ satisfy the sublinear boundary condition of order $\alpha$. Then $\mathcal{H}f$ vanishes along the boundary. As a consequence, $\mathcal{H}f$ extends to a continuous function over $V^*$ by setting
$\mathcal{H}f=0$ outside $\mathcal{C}^*$.
\end{prop}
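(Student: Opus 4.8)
The plan is to first establish that $\mathcal{H}f$ vanishes at every point of the boundary of $\mathcal{C}^*$, and then to bootstrap this into the continuity of the zero-extension using the upper semicontinuity that is built into the definition of a polar transform. The engine behind the vanishing is to exploit a ``witness'' boundary direction of $\mathcal{C}$ against which $w^*$ pairs to zero, and to feed slightly perturbed versions of it into the infimum defining $\mathcal{H}f(w^*)$; the sublinear boundary condition is exactly what controls how fast $f$ degenerates along such a perturbation.

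For the vanishing, fix a nonzero $w^*$ on the boundary of $\mathcal{C}^*$. Since $\mathcal{C}$ is a proper full-dimensional closed convex cone, the interior of $\mathcal{C}^*$ consists of those covectors pairing strictly positively with every nonzero element of $\mathcal{C}$; hence there is a nonzero $v_0 \in \mathcal{C}$ with $w^* \cdot v_0 = 0$. Moreover $v_0$ must lie on the boundary of $\mathcal{C}$, for if $v_0$ were interior then $w^*$ would pair nonnegatively with $v_0 \pm t d$ for all directions $d$ and small $t$, forcing $w^* = 0$. Now fix any $x$ in the interior of $\mathcal{C}$ and test the infimum against $v_\epsilon := v_0 + \epsilon x \in \mathcal{C}^{\circ}$:
\begin{equation*}
\mathcal{H}f(w^*) \leq \left( \frac{w^* \cdot (v_0 + \epsilon x)}{f(v_0 + \epsilon x)^{1/s}} \right)^{s/(s-1)} = \left( \frac{\epsilon\,(w^* \cdot x)}{f(v_0 + \epsilon x)^{1/s}} \right)^{s/(s-1)}.
\end{equation*}
Applying the sublinear boundary condition $f(v_0 + \epsilon x)^{1/s} \geq C\epsilon^{\alpha}$ with $C = C(v_0, x) > 0$ gives
\begin{equation*}
\mathcal{H}f(w^*) \leq \left( \frac{w^* \cdot x}{C} \right)^{s/(s-1)} \epsilon^{(1-\alpha)s/(s-1)}.
\end{equation*}
Because $\alpha < 1$ and $s > 1$, the exponent $(1-\alpha)s/(s-1)$ is strictly positive, so letting $\epsilon \to 0^{+}$ forces $\mathcal{H}f(w^*) = 0$; the case $w^* = 0$ is trivial. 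This is the heart of the argument, and the one place where the strict inequality $\alpha < 1$ is essential: with $\alpha = 1$ the estimate yields only boundedness, not decay. The main obstacle is therefore purely the vanishing step, and within it the structural point of locating the witness $v_0$ on the boundary of $\mathcal{C}$ so that the hypothesis can be invoked at all.

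For continuity of the zero-extension $F$ (equal to $\mathcal{H}f$ on $\mathcal{C}^*$ and to $0$ elsewhere), the plan is to combine three facts. First, for each fixed $v \in \mathcal{C}^{\circ}$ the map $w^* \mapsto (w^* \cdot v / f(v)^{1/s})^{s/(s-1)}$ is continuous and nonnegative on $\mathcal{C}^*$, so $\mathcal{H}f$, being their pointwise infimum, is nonnegative and upper semicontinuous on $\mathcal{C}^*$. Second, by the general theory of polar transforms recalled from \cite{lehmannxiao2015a}, $\mathcal{H}f$ is concave of weight $s/(s-1)$ and hence continuous on the interior of $\mathcal{C}^*$. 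Third, $\mathcal{H}f = 0$ on the boundary of $\mathcal{C}^*$ by the previous paragraph. Continuity of $F$ then follows pointwise: at interior points of $\mathcal{C}^*$ it is inherited from $\mathcal{H}f$; at points outside the closed cone $\mathcal{C}^*$ the function is identically $0$ on an open neighborhood; and at a boundary point $w_0^*$, given $\eta > 0$, upper semicontinuity furnishes a neighborhood $U$ with $\mathcal{H}f < \eta$ on $U \cap \mathcal{C}^*$, while $F = 0 < \eta$ on $U \setminus \mathcal{C}^*$, so $0 \leq F < \eta$ throughout $U$ and $F(w_0^*) = 0$. Thus $F$ is continuous on all of $V^*$.
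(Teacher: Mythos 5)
Your proof is correct. The paper itself states this proposition without proof (it is recalled from the companion work \cite{lehmannxiao2015a}), and your argument --- locating a nonzero boundary witness $v_0\in\partial\mathcal{C}$ with $w^*\cdot v_0=0$, testing the infimum against $v_0+\epsilon x$ so that the sublinear condition yields the decay $\mathcal{H}f(w^*)\leq C'\epsilon^{(1-\alpha)s/(s-1)}$, and then combining upper semicontinuity of the infimum with concavity on the interior to get continuity of the zero-extension --- is exactly the intended one.
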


\begin{rmk}
If $f$ satisfies the sublinear condition, then $\mathcal{C}_{\mathcal{H}f} ^* =\mathcal{C}^{*\circ}$.
\end{rmk}

\subsection{Formal Zariski decompositions}

The Legendre-Fenchel transform relates the strict concavity of a function to the differentiability of its transform.  The transform $\mathcal{H}$ will play the same role in our situation; however, one needs to interpret the strict concavity slightly differently.  We will encapsulate this property using the notion of a Zariski decomposition.

\begin{defn}\label{definition formal zariski}
Let $f \in \HConc_{s}(\mathcal{C})$ and let $U \subset \mathcal{C}$ be a non-empty subcone.  We say that $f$ admits a strong Zariski decomposition with respect to $U$ if:
\begin{enumerate}
\item For every $v \in \mathcal{C}_{f}$ there are unique elements $p_{v} \in U$ and $n_{v} \in \mathcal{C}$ satisfying
\begin{equation*}
v = p_{v} + n_{v} \qquad \qquad \textrm{and} \qquad \qquad f(v) = f(p_{v}).
\end{equation*}
We call the expression $v = p_v + n_v$ the Zariski decomposition of $v$, and call $p_v$ the positive part and $n_v$ the negative part of $v$.
\item For any $v,w \in \mathcal{C}_f$ satisfying $v+w \in \mathcal{C}_f$ we have
\begin{equation*}
f(v)^{1/s} + f(w)^{1/s} \leq f(v+w)^{1/s}
\end{equation*}
with equality only if $p_{v}$ and $p_{w}$ are proportional.
\end{enumerate}
\end{defn}

In \cite{lehmannxiao2015a}, we proved the following theorem linking the existence of Zariski decomposition structure with differentiability.

\begin{thrm} \label{strong zariski equivalence}
Let $f \in \HConc_{s}(\mathcal{C})$.
Then we have the following results:
\begin{itemize}
\item If $f$ is $+$-differentiable, then $\mathcal{H}f$ admits a strong Zariski decomposition  with respect to the cone $D(\mathcal{C}_{f}) \cup \{0\}$.
\item If $\mathcal{H}f$ admits a strong Zariski decomposition with respect to a cone $U$, then $f$ is differentiable.
\end{itemize}
In the first situation, one can construct the positive part of $w^{*}$ by choosing any $v \in G_{w^{*}}$ with $f(v) > 0$ and choosing $p_{w^{*}}$ to be the unique element of the ray spanned by $D(v)$ with $\mathcal{H}f(p_{w^{*}}) = \mathcal{H}f(w^{*})$.
\end{thrm}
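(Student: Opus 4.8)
The plan is to exploit the involutivity $\mathcal{H}^{2}f=f$ together with a first-order (variational) analysis of the infimum defining $\mathcal{H}f$, so that the two bullets become, respectively, a construction of the decomposition and a formal converse. The computational heart of both is a single supergradient estimate. Fix $v\in\mathcal{C}_{f}$ with $f(v)>0$; since $f^{1/s}$ is concave and homogeneous of degree $1$ with gradient $f(v)^{1/s-1}D(v)$ at interior points, the supporting-hyperplane inequality reads $D(v)\cdot u\ge f(v)^{(s-1)/s}f(u)^{1/s}$ for all $u\in\mathcal{C}$, with equality at $u=v$ by Euler's relation $D(v)\cdot v=f(v)$. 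This shows $D(v)\in\mathcal{C}^{*}$, that $v\in G_{D(v)}$, and, after raising to the power $s^{*}:=s/(s-1)$, that $\mathcal{H}f(D(v))=f(v)$. I would first establish this on $\mathcal{C}^{\circ}$ and then extend it to all of $\mathcal{C}_{f}$ using the continuity of $D$ supplied by $+$-differentiability.

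For the first bullet, I would next record the variational inequality at a minimizer: if $w^{*}\in\mathcal{C}^{*}_{\mathcal{H}f}$ and $v\in G_{w^{*}}$ with $f(v)>0$, then differentiating $u\mapsto (w^{*}\cdot u)/f(u)^{1/s}$ at $v$ along directions into $\mathcal{C}$ gives $w^{*}\cdot u\ge \lambda\,D(v)\cdot u$ for all $u\in\mathcal{C}$, where $\lambda=(w^{*}\cdot v)/f(v)$; equivalently $n_{w^{*}}:=w^{*}-\lambda D(v)\in\mathcal{C}^{*}$. Setting $p_{w^{*}}:=\lambda D(v)$ yields an element of $D(\mathcal{C}_{f})$ (using that $D$ is homogeneous of weight $s-1$), and a short exponent computation combining $\mathcal{H}f(D(v))=f(v)$, the weight-$s^{*}$ homogeneity of $\mathcal{H}f$, and the identity $s^{*}-1=s^{*}/s$ shows $\mathcal{H}f(p_{w^{*}})=\mathcal{H}f(w^{*})$. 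This produces the decomposition $w^{*}=p_{w^{*}}+n_{w^{*}}$. For uniqueness I would argue that any admissible $w^{*}=p+n$ with $p=D(v')\in U$ forces equality throughout the supergradient estimate at some common minimizer, so that $f^{1/s}$ is affine along a segment; since a differentiable concave function has constant gradient on an affine segment, $D$ is proportional along it, and the normalization fixing $\mathcal{H}f(p)=\mathcal{H}f(w^{*})$ then pins down the positive part. Property $(2)$ is immediate: superadditivity is just the $s^{*}$-concavity of $\mathcal{H}f$, and in the equality case a single $v$ lies in $G_{w_{1}^{*}}\cap G_{w_{2}^{*}}$, so both positive parts lie on the ray of $D(v)$ and are proportional.

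For the second bullet I would pass to $g:=\mathcal{H}f$, so that $f=\mathcal{H}g$ by involutivity, and reduce differentiability of $f$ on $\mathcal{C}^{\circ}$ to showing that $G_{v}$ is a single ray for each interior $v$: the normalized elements $w^{*}/g(w^{*})^{1/s^{*}}$ of $G_{v}$ are precisely the supergradients of $f^{1/s}$ at $v$, and a concave function with a unique supergradient is $C^{1}$ there. Take $w_{1}^{*},w_{2}^{*}\in G_{v}$; as in the equality analysis above, $w_{1}^{*}+w_{2}^{*}\in G_{v}$ and $g^{1/s^{*}}$ is additive on them. The crucial observation is that for interior $v$ the negative parts vanish: comparing $(p_{w^{*}}\cdot v)/g(p_{w^{*}})^{1/s^{*}}$ with the minimal value $f(v)^{1/s}$ forces $n_{w^{*}}\cdot v=0$, and since $n_{w^{*}}\in\mathcal{C}^{*}$ while $v$ is interior this gives $n_{w^{*}}=0$, so $w^{*}=p_{w^{*}}\in U$. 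Then property $(2)$ of the strong Zariski decomposition of $g$, in the equality case, gives that $p_{w_{1}^{*}}$ and $p_{w_{2}^{*}}$ are proportional, i.e. $w_{1}^{*}$ and $w_{2}^{*}$ are proportional. Hence $G_{v}$ is a single ray and $f$ is differentiable at $v$.

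The main obstacle is the boundary analysis underlying the first bullet: the minimizers $v\in G_{w^{*}}$ that govern the decomposition typically lie on $\partial\mathcal{C}$, where $f$ need not be classically differentiable, so both the supergradient estimate and the variational inequality must be obtained as limits from $\mathcal{C}^{\circ}$. This is exactly where $+$-differentiability — the continuous extension of $D$ to $\mathcal{C}_{f}$ — is indispensable, and where the uniqueness argument (constancy of the gradient along an affine segment that may touch the boundary) needs the most care.
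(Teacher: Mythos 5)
First, note that this paper does not prove Theorem \ref{strong zariski equivalence}; it is quoted from the companion paper \cite{lehmannxiao2015a}, so I am measuring your argument against the approach taken there. Your overall architecture is the right one and essentially matches it: the supergradient estimate $D(v)\cdot u\ge f(v)^{(s-1)/s}f(u)^{1/s}$ (with equality at $u=v$ by Euler), giving $\mathcal{H}f(D(v))=f(v)$ and $v\in G_{D(v)}$; the first-order optimality inequality at a minimizer, giving $w^{*}-\lambda D(v)\in\mathcal{C}^{*}$ with $\lambda=(w^{*}\cdot v)/f(v)$ and the exponent bookkeeping $\mathcal{H}f(\lambda D(v))=\mathcal{H}f(w^{*})$; and, for the converse, the identification of normalized elements of $G_{v}$ with supergradients of $f^{1/s}$ at $v$, so that vanishing of negative parts at interior points (forced by $n_{w^{*}}\cdot v=0$ and $v\in\mathcal{C}^{\circ}$) plus property (2) of the decomposition collapses $G_{v}$ to a single ray. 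Your second bullet is complete and correct, and your property (2) for the first bullet (a common minimizer $v\in G_{w_{1}^{*}}\cap G_{w_{2}^{*}}$ in the equality case) is fine \emph{given} uniqueness.

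The genuine gap is in the uniqueness step of the first bullet, which is also exactly what makes the construction of $p_{w^{*}}$ well defined. You reduce to: $f^{1/s}$ is linear on the cone spanned by two points $v,v'$ of $\mathcal{C}_{f}$, hence ``a differentiable concave function has constant gradient on an affine segment, so $D$ is proportional along it.'' That fact is true only where $f^{1/s}$ is actually differentiable, because its proof uses that the supergradient at each point of the segment is \emph{unique}. But the points of $G_{w^{*}}$ (and the points $v'$ with $D(v')=p_{w^{*}}$) typically lie on $\partial\mathcal{C}$, where $+$-differentiability supplies only a continuous extension of $D$ from $\mathcal{C}^{\circ}$ — i.e.\ it selects one supergradient of $f^{1/s}$ at $v$, with no claim that it is the only one. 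So the cited fact does not apply, and without a substitute you cannot conclude that two admissible positive parts $D(v_{1}'),D(v_{2}')$ lie on the same ray, nor that the recipe ``take any $v\in G_{w^{*}}$'' is independent of $v$. A substitute does exist and is where the real work lies: for a segment $v_{t}$ inside a face on which $f^{1/s}$ is linear and which lies in $\mathcal{C}_{f}$, one shows (using the integral representation of $f^{1/s}$ along $v_{t}+ru$ for $u\in\mathcal{C}^{\circ}$, valid by $+$-differentiability) that $t\mapsto D(v_{t})\cdot u$ is concave; combined with the degree-$0$ homogeneity of $f^{(1-s)/s}D$ on the two-dimensional subcone, a concave $0$-homogeneous continuous function on a salient cone is constant, which forces $D(v_{1}')\parallel D(v_{2}')$. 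You flag the boundary as ``where the uniqueness argument needs the most care,'' but the argument you offer in its place is the one that breaks there, so as written the proof of property (1)'s uniqueness — and hence of the last sentence of the theorem — is incomplete.
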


Under some additional conditions, we can get the continuity of the Zariski decompositions.

\begin{thrm}\label{thrm positive part continuity}
Let $f \in \HConc_{s}(\mathcal{C})$ be $+$-differentiable.  Then the function taking an element $w^{*} \in \mathcal{C}^{* \circ}$ to its positive part $p_{w^*}$ is continuous.

If furthermore $G_{v} \cup \{ 0 \}$ is a unique ray for every $v \in \mathcal{C}_{f}$ and $\mathcal{H}f$ is continuous on all of $\mathcal{C}^{*}_{\mathcal{H}f}$, then the Zariski decomposition is continuous on all of $\mathcal{C}^{*}_{\mathcal{H}f}$.
\end{thrm}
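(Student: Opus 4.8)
The plan is to reduce the continuity of the entire Zariski decomposition to the continuity of the positive part map $w^* \mapsto p_{w^*}$, since the negative part is $n_{w^*} = w^* - p_{w^*}$ and subtraction is continuous. Throughout I write $s' = s/(s-1)$ for the weight of $\mathcal{H}f \in \HConc_{s'}(\mathcal{C}^*)$. By Theorem \ref{strong zariski equivalence}, $\mathcal{H}f$ admits a strong Zariski decomposition with respect to $U = D(\mathcal{C}_f) \cup \{0\}$, where $p_{w^*}$ is obtained by choosing any $v \in G_{w^*}$ with $f(v)>0$ and rescaling $D(v)$ so that $\mathcal{H}f(p_{w^*}) = \mathcal{H}f(w^*)$. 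A short computation using Euler's identity $D(v)\cdot v = f(v)$ and the concavity inequality $D(v)\cdot x \ge f(v)^{(s-1)/s} f(x)^{1/s}$ shows $\mathcal{H}f(D(v)) = f(v)$; normalizing $f(v)=1$ this gives the explicit formula $p_{w^*} = \mathcal{H}f(w^*)^{1/s'} D(v)$, which I will use repeatedly.

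For the first statement, fix $w^* \in \mathcal{C}^{*\circ}$ and a sequence $w_k^* \to w^*$ in $\mathcal{C}^{*\circ}$. Since $\mathcal{H}f$ is $s'$-concave it is continuous on $\mathcal{C}^{*\circ}$, so $\mathcal{H}f(w_k^*) \to \mathcal{H}f(w^*) > 0$. Choosing minimizers $v_k \in G_{w_k^*}$ normalized by $f(v_k) = 1$, the identity $w_k^* \cdot v_k = \mathcal{H}f(w_k^*)^{1/s'}$ together with the coercivity of $w^* \cdot (-)$ on $\mathcal{C}$ (valid because $w^*$ is interior to $\mathcal{C}^*$) confines the $v_k$ to a compact set. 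Passing to a subsequence with $v_k \to v_\infty$, upper semicontinuity of $f$ and the defining infimum of $\mathcal{H}f(w^*)$ force $f(v_\infty) = 1$ and $v_\infty \in G_{w^*}$. By $+$-differentiability $D$ is continuous on $\mathcal{C}_f$, so $D(v_k) \to D(v_\infty)$, and since $\mathcal{H}f(w_k^*)^{1/s'} \to \mathcal{H}f(w^*)^{1/s'}$ we obtain $p_{w_k^*} \to \mathcal{H}f(w^*)^{1/s'} D(v_\infty) = p_{w^*}$. Because the positive part is a single well-defined point, every subsequence has a further subsequence converging to $p_{w^*}$, so the full sequence converges, giving continuity on $\mathcal{C}^{*\circ}$.

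For the second statement the difficulty is that for $w^*$ on the boundary of $\mathcal{C}^*$ (but still in $\mathcal{C}^*_{\mathcal{H}f}$) the functional $w^*\cdot(-)$ is no longer coercive, so the minimizers $v_k$ may escape to infinity and the argument above breaks. Instead I would exploit compactness on the side of the Zariski parts. As $\mathcal{C}$ is full-dimensional, $\mathcal{C}^*$ is salient, so in $w_k^* = p_{w_k^*} + n_{w_k^*}$ both summands are bounded by a fixed multiple of $\|w_k^*\|$; passing to a subsequence, $p_{w_k^*} \to P$ and $n_{w_k^*} \to N$ with $P + N = w^*$ and $P,N \in \mathcal{C}^*$. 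The hypothesis that $\mathcal{H}f$ is continuous on $\mathcal{C}^*_{\mathcal{H}f}$ gives $\mathcal{H}f(p_{w_k^*}) = \mathcal{H}f(w_k^*) \to \mathcal{H}f(w^*)$, so upper semicontinuity of $\mathcal{H}f$ yields $\mathcal{H}f(P) \ge \mathcal{H}f(w^*)$, while $s'$-concavity gives $\mathcal{H}f(w^*)^{1/s'} \ge \mathcal{H}f(P)^{1/s'} + \mathcal{H}f(N)^{1/s'}$. Together these force $\mathcal{H}f(P) = \mathcal{H}f(w^*) > 0$ and $\mathcal{H}f(N) = 0$, so $w^* = P + N$ is a candidate Zariski decomposition, and by the uniqueness in Definition \ref{definition formal zariski} it suffices to prove $P \in U$.

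The main obstacle, and the step where the remaining hypothesis is essential, is exactly showing $P \in U = D(\mathcal{C}_f) \cup \{0\}$. A priori $P$ is only a limit of elements $p_{w_k^*} = D(y_k)$ with $y_k \in \mathcal{C}_f$ and $f(y_k) = \mathcal{H}f(w_k^*)$ bounded away from $0$ and $\infty$; since the $y_k$ may run off toward the locus where $f$ degenerates, the cone $D(\mathcal{C}_f)$ need not be closed, so $P \in U$ is not automatic. Here I would invoke the hypothesis that $G_v \cup \{0\}$ is a single ray for every $v \in \mathcal{C}_f$, which pins down the minimizing direction canonically and makes the correspondence between a point and its image under $D$ rigid enough that the limit $P$ is again of the form $D(v_\infty)$ for a genuine $v_\infty \in \mathcal{C}_f$ with $f(v_\infty) = \mathcal{H}f(w^*)$: using the matching of values $\mathcal{H}f(P) = \mathcal{H}f(w^*)$ and the continuity of $\mathcal{H}f$ on $\mathcal{C}^*_{\mathcal{H}f}$, one identifies the limiting minimizer direction and, via the unique-ray property, concludes that $P$ coincides with its own positive part and hence lies in $U$. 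Once $P \in U$ is established, uniqueness of the decomposition gives $P = p_{w^*}$; since this holds for every subsequential limit, $p_{w_k^*} \to p_{w^*}$, and the Zariski decomposition is continuous on all of $\mathcal{C}^*_{\mathcal{H}f}$.
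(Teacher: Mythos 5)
Your proof of the first statement is correct and complete: coercivity of $w^{*}\cdot(-)$ for interior $w^{*}$ gives compactness of the normalized minimizers, upper semicontinuity of $f$ together with the defining infimum identifies any limit point as an element of $G_{w^{*}}$ with $f=1$, and $+$-differentiability converts this into convergence of $D(v_{k})$; the formula $p_{w^{*}}=\mathcal{H}f(w^{*})^{1/s'}D(v)$ is also right.

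The second statement is where there is a genuine gap. Your reduction (compactness of $p_{w_{k}^{*}}+n_{w_{k}^{*}}$ in the salient cone $\mathcal{C}^{*}$, forcing $\mathcal{H}f(P)=\mathcal{H}f(w^{*})$ and $\mathcal{H}f(N)=0$, then appealing to uniqueness once $P\in U$ is known) is sound, but the step $P\in U$ --- which you yourself flag as the crux and the only place the unique-ray hypothesis can enter --- is never actually argued: ``one identifies the limiting minimizer direction and \ldots concludes'' describes what must be done rather than doing it. Here is how the hypothesis is actually used, and it bypasses $U$ entirely. Pick $v\in G_{w^{*}}$ with $f(v)=1$ (nonempty since $w^{*}\in\mathcal{C}^{*}_{\mathcal{H}f}$). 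From $P\cdot v+N\cdot v=w^{*}\cdot v=\mathcal{H}f(w^{*})^{1/s'}$, the general inequality $P\cdot v\geq\mathcal{H}f(P)^{1/s'}f(v)^{1/s}=\mathcal{H}f(w^{*})^{1/s'}$ and $N\cdot v\geq 0$ force equality, i.e.\ $P\in G_{v}$. But $w^{*}\in G_{v}$ as well (by the symmetry $v\in G_{w^{*}}\Leftrightarrow w^{*}\in G_{v}$ when both values are positive), and so is $p_{w^{*}}$, since $p_{w^{*}}\cdot v=\mathcal{H}f(w^{*})^{1/s'}D(v)\cdot v=\mathcal{H}f(w^{*})^{1/s'}f(v)$. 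The unique-ray hypothesis then makes $P$, $w^{*}$ and $p_{w^{*}}$ all proportional, and equality of their $\mathcal{H}f$-values gives $P=p_{w^{*}}=w^{*}$. Note what this computation reveals: under the stated hypotheses the positive-part map is the \emph{identity} on $\mathcal{C}^{*}_{\mathcal{H}f}$, so continuity of $w^{*}\mapsto p_{w^{*}}$ there is vacuous, and the substantive content of ``the Zariski decomposition is continuous'' --- the content actually invoked in Theorem \ref{thm continuity big movable}, namely continuity of $\alpha\mapsto L_{\alpha}$ --- is the continuity up to the boundary of $\mathcal{C}^{*}$ of the normalized minimizer $w^{*}\mapsto v_{w^{*}}\in G_{w^{*}}$, i.e.\ of the inverse of $D$. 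For boundary points your coercivity argument from part one fails, and one instead needs a compactness argument on a slice of $\mathcal{C}$ combined with the unique-ray property to pin down the limiting minimizer; your proposal does not address this.
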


\subsection{Zariski decomposition for curves}
\label{section curvezar}
In \cite{lehmannxiao2015a}, as an application of the above formal Zariski decomposition to the situation:
\begin{align*}
\mathcal{C}=\Nef^1 (X),\quad f=\vol, \quad \mathcal{C}^*=\Eff_1 (X), \quad \mathcal{H}f=\widehat{\vol},
\end{align*}
we obtain the Zariski decomposition for curves. The following result is important in the proof of Theorem \ref{thm volversusmob}.

\begin{defn}
\label{def zariski decomposition}
Let $X$ be a projective variety of dimension $n$ and let $\alpha \in \Eff_{1}(X)^{\circ}$ be a big curve class. Then a Zariski decomposition for $\alpha$ is a decomposition
\begin{align*}
\alpha = B^{n-1} + \gamma
\end{align*}
where $B$ is a big and nef $\mathbb{R}$-Cartier divisor class, $\gamma$ is pseudo-effective, and $B \cdot \gamma = 0$.  We call $B^{n-1}$ the ``positive part'' and $\gamma$ the ``negative part" of the decomposition.
\end{defn}

\begin{thrm}
\label{thm curve decomposition}
Let $X$ be a projective variety of dimension $n$ and let $\alpha \in \Eff_{1}(X)^{\circ}$ be a big curve class. Then $\alpha$ admits a unique Zariski decomposition $\alpha = B_{\alpha} ^{n-1} + \gamma$.
Furthermore,
\begin{equation*}
\widehat{\vol}(\alpha) = \widehat{\vol}(B_{\alpha}^{n-1}) = \vol(B_{\alpha})
\end{equation*}
and $B_{\alpha}$ is the unique big and nef divisor class with this property satisfying $B_{\alpha}^{n-1} \preceq \alpha$.  The class $B_{\alpha}$ depends continuously on $\alpha$. 
\end{thrm}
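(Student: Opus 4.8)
The plan is to obtain every assertion as the geometric shadow of the abstract Zariski-decomposition machinery of Section~\ref{legendresection}, applied to the polar pair
\begin{equation*}
\mathcal{C} = \Nef^1(X), \qquad f = \vol, \qquad \mathcal{C}^* = \Eff_1(X), \qquad \mathcal{H}f = \widehat{\vol}.
\end{equation*}
The essential simplification is that on the nef cone the volume is the honest top self-intersection $\vol(L) = L^n$, so $f$ is the restriction of a polynomial and all positive products degenerate to ordinary products: $\langle L^{n-1}\rangle = L^{n-1}$ for $L$ nef.

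First I would check $\vol \in \HConc_n(\Nef^1(X))$: homogeneity of weight $n$ is clear, positivity on the interior (the ample cone) and upper semicontinuity are standard, and $n$-concavity is the Khovanskii--Teissier inequality $((L_1+L_2)^n)^{1/n}\ge (L_1^n)^{1/n}+(L_2^n)^{1/n}$ for nef classes. Since $f$ is polynomial on $\mathcal{C}$, its derivative is defined and continuous on all of $N^1(X)$, so $f$ is trivially $+$-differentiable with $D(L)=L^{n-1}$. One also checks the sublinear boundary condition for $\vol$ (using $(v+\epsilon x)^n \ge n\epsilon\, v^{n-1}\cdot x$ along the nef boundary), so that by the remark following Proposition~\ref{sublinearcontinuity} the domain $\mathcal{C}^*_{\widehat{\vol}}$ is exactly $\Eff_1(X)^\circ$. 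Now Theorem~\ref{strong zariski equivalence} applies: $\widehat{\vol}$ admits a strong Zariski decomposition with respect to $U = D(\mathcal{C}_{\vol})\cup\{0\}$. I would identify $\mathcal{C}_{\vol}$ with the big and nef classes (every big nef $L$ computes $\widehat{\vol}(L^{n-1})=\vol(L)>0$, and $f(L)>0$ forces $L$ big nef), hence $U$ with $\{B^{n-1} : B \textrm{ big and nef}\}$. This gives, for each big $\alpha$, a unique $p_\alpha = B_\alpha^{n-1}\in U$ and $\gamma = \alpha - p_\alpha \in \Eff_1(X)$ with $\widehat{\vol}(\alpha) = \widehat{\vol}(B_\alpha^{n-1})$.

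It remains to recognize this as the decomposition of Definition~\ref{def zariski decomposition} and to extract the quantitative statements. The identity $\widehat{\vol}(B^{n-1}) = \vol(B)$ is a one-line Khovanskii--Teissier computation: for any big nef $B$ and any test class $A$ one has $A\cdot B^{n-1}\ge \vol(A)^{1/n}\vol(B)^{(n-1)/n}$ with equality at $A=B$, so the defining infimum is attained at $A=B$ and equals $B^n$. For orthogonality, the construction in Theorem~\ref{strong zariski equivalence} places $B_\alpha$ (up to positive scaling, which does not affect the scale-invariant ratio) in $G_\alpha$, so $B_\alpha$ computes $\widehat{\vol}(\alpha)$; combining $\widehat{\vol}(\alpha) = (B_\alpha\cdot\alpha/\vol(B_\alpha)^{1/n})^{n/(n-1)}$ with $\widehat{\vol}(\alpha)=\vol(B_\alpha)$ forces $B_\alpha\cdot\alpha = B_\alpha^n$, i.e. $B_\alpha\cdot\gamma = 0$. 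For the uniqueness clause I would observe that any big nef $B$ with $\vol(B)=\widehat{\vol}(\alpha)$ and $B^{n-1}\preceq\alpha$ satisfies exactly the defining conditions of a positive part ($B^{n-1}\in U$, $\alpha-B^{n-1}\in\Eff_1(X)$, and $\widehat{\vol}(\alpha)=\vol(B)=\widehat{\vol}(B^{n-1})$), so uniqueness of the positive part gives $B^{n-1}=B_\alpha^{n-1}$; the injectivity of $L\mapsto L^{n-1}$ on big nef classes — the Teissier proportionality furnished by the strict log concavity of Theorem~\ref{thm sigma strict logcon} — then yields $B=B_\alpha$.

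Finally, continuity of $B_\alpha$ in $\alpha$ splits into two continuities. The first part of Theorem~\ref{thrm positive part continuity} gives directly that the positive part $p_\alpha = B_\alpha^{n-1}$ varies continuously on $\Eff_1(X)^\circ$. Then I would invert $L\mapsto L^{n-1}$ continuously using Theorem~\ref{homeothrm}, which exhibits $\langle -^{n-1}\rangle$ as a homeomorphism of the interiors of the movable cones; since $B_\alpha$ is big and nef, hence interior to $\Mov^1(X)$, and $p_\alpha$ is interior to $\Mov_1(X)$, the inverse homeomorphism recovers $B_\alpha$ continuously from $p_\alpha$. I expect the main obstacle to be the non-formal geometric input underlying the whole scheme: verifying that $\mathcal{C}_{\vol}$ and the computing sets $G_\alpha$ consist of genuine big and nef divisor classes (so that the abstract positive part really has the form $B^{n-1}$), together with the single-ray property of $G_\alpha$ needed to pin down $B_\alpha$. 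Both rest on the strict Khovanskii--Teissier/log concavity of the volume and on the homeomorphism of Theorem~\ref{homeothrm}; everything else is a formal reading-off from Section~\ref{legendresection}.
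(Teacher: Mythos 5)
Your proposal is correct and follows exactly the route the paper takes: Theorem \ref{thm curve decomposition} is quoted from the companion paper, and Section \ref{section curvezar} states explicitly that it is obtained by applying the formal Zariski decomposition machinery (Theorem \ref{strong zariski equivalence}, Theorem \ref{thrm positive part continuity}) to $\mathcal{C}=\Nef^1(X)$, $f=\vol$, $\mathcal{C}^*=\Eff_1(X)$, $\mathcal{H}f=\widehat{\vol}$, which is precisely your argument, including the identification of $U$ with the classes $B^{n-1}$ and the Khovanskii--Teissier/Teissier-proportionality inputs. One small repair: your bound for the sublinear boundary condition should use $(v+\epsilon x)^n \geq n\epsilon^{n-1}\, v\cdot x^{n-1}$ (order $(n-1)/n$) rather than $n\epsilon\, v^{n-1}\cdot x$, since $v^{n-1}\cdot x$ can vanish for $v$ on the boundary of the nef cone, whereas $v\cdot x^{n-1}>0$ for any nonzero nef $v$ and ample $x$.
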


\begin{rmk}
As explained in \cite[Remark 5.1]{lehmannxiao2015a}, the above result holds in the K\"ahler setting -- we have a similar decomposition for any interior point of the pseudo-effective $(n-1,n-1)$-cone $\mathcal{N}$.
\end{rmk}

\section{Positive products and movable curves}
\label{section movcone}

In this section, we study the movable cone of curves and its relationship to the positive product of divisors.  A key tool in this study is the following function of \cite[Definition 2.2]{xiao15}:

\begin{defn}[see \cite{xiao15} Definition 2.2]
Let $X$ be a projective variety of dimension $n$.  For any curve class $\alpha \in \Mov_{1}(X)$ define
\begin{equation*}
\mathfrak{M}(\alpha) = \inf_{L \textrm{ big divisor class}} \left( \frac{L \cdot \alpha}{\vol(L)^{1/n}} \right)^{n/n-1}.
\end{equation*}
We say that a big class $L$ computes $\mathfrak{M}(\alpha)$ if this infimum is achieved by $L$.  When $\alpha$ is a curve class that is not movable, we set $\mathfrak{M}(\alpha)=0$.
\end{defn}

In other words, $\mathfrak{M}$ is the function on $\Mov_{1}(X)$ defined as the polar transform of the volume function on $\Eff^{1}(X)$.  Dually, we can think of the volume function on divisors as the polar transform of $\mathfrak{M}$; this viewpoint allows us to apply the general theory of convexity developed in \cite{lehmannxiao2015a} to $\vol$. 

In this section we first prove some new results concerning the volume function for divisors.  We will then return to the study of $\mathfrak{M}$ below, where we show that it measures the volume of the ``$(n-1)$st root'' of $\alpha$.

\subsection{The volume function on big and movable divisors}
We first extend several well known results on big and nef divisors to big and movable divisors.
The key will be an extension of Teissier proportionality theorem for big and nef divisors (see \cite{lehmannxiao2015a, bfj09}) to big and movable divisors.

\begin{lem} \label{lemma psefconstantbound}
Let $X$ be a projective variety of dimension $n$.  Let $L_{1}$ and $L_{2}$ be big movable divisor classes.  Set $s$ to be the largest real number such that $L_{1} - sL_{2}$ is pseudo-effective.  Then
\begin{equation*}
s^{n} \leq \frac{\vol(L_{1})}{\vol(L_{2})}
\end{equation*}
with equality if and only if $L_{1}$ and $L_{2}$ are proportional.
\end{lem}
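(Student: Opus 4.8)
The inequality itself is essentially monotonicity of the volume, while the equality case is a ``Diskant-type'' argument. Here is the plan. Since $s$ is maximal with $L_{1}-sL_{2}\in\Eff^{1}(X)$, the class $N:=L_{1}-sL_{2}$ is pseudo-effective, so $L_{1}\succeq sL_{2}$. By monotonicity and homogeneity of $\vol$ we get $\vol(L_{1})\geq\vol(sL_{2})=s^{n}\vol(L_{2})$, which is the asserted inequality. For the trivial direction of the equality statement, if $L_{1}=cL_{2}$ then, using that $L_{2}$ is big (so $-L_{2}\notin\Eff^{1}(X)$), one checks $s=c$ and both sides equal $c^{n}\vol(L_{2})$; hence proportionality forces equality.

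The real content is the converse. Assume $\vol(L_{1})=s^{n}\vol(L_{2})$, equivalently $\vol(L_{1})^{1/n}=s\,\vol(L_{2})^{1/n}$. For $t\in[0,s]$ I would write $L_{1}-tL_{2}=\tfrac{s-t}{s}L_{1}+\tfrac{t}{s}N$; since $L_{1}$ is big and $N$ is pseudo-effective, $L_{1}-tL_{2}$ is big for $t\in[0,s)$. Set $\psi(t):=\vol(L_{1}-tL_{2})^{1/n}$, which is concave on $[0,s]$ by log-concavity of the volume and differentiable at $t=0$ since $\vol$ is $C^{1}$ on the big cone. Because $L_{1}-tL_{2}\succeq(s-t)L_{2}$, monotonicity yields the lower bound $\psi(t)\geq(s-t)\vol(L_{2})^{1/n}=:\ell(t)$, and the equality hypothesis says exactly that $\psi(0)=\ell(0)$.

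Now $\psi-\ell\geq 0$ on $[0,s]$ and vanishes at the left endpoint, so the right derivative satisfies $\psi'(0)\geq\ell'(0)=-\vol(L_{2})^{1/n}$. On the other hand, the derivative formula for the volume (intersection against the positive product, available in this setting by the cited results) gives $\psi'(0)=-\vol(L_{1})^{(1-n)/n}\,\langle L_{1}^{n-1}\rangle\cdot L_{2}$. Combining these yields $\langle L_{1}^{n-1}\rangle\cdot L_{2}\leq\vol(L_{1})^{(n-1)/n}\vol(L_{2})^{1/n}$. The reverse inequality is the Khovanskii--Teissier inequality, itself obtained from superadditivity of $\vol(\cdot)^{1/n}$ together with the same derivative formula (apply it to $t\mapsto\vol(L_{1}+tL_{2})^{1/n}$). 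Hence equality holds: $\langle L_{1}^{n-1}\rangle\cdot L_{2}=\vol(L_{1})^{(n-1)/n}\vol(L_{2})^{1/n}$.

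It then remains to deduce $L_{1}\propto L_{2}$ from this equality in Khovanskii--Teissier. Feeding the equality back into the concave function $t\mapsto\vol(L_{1}+tL_{2})^{1/n}$ forces $\vol(L_{1}+L_{2})^{1/n}=\vol(L_{1})^{1/n}+\vol(L_{2})^{1/n}$, and strict log concavity on the big and movable cone (Theorem~\ref{thm sigma strict logcon}) gives $P_{\sigma}(L_{1})\propto P_{\sigma}(L_{2})$; since $L_{1},L_{2}$ are movable, these positive parts are $L_{1},L_{2}$ themselves. The main obstacle is precisely this last proportionality input: the equality case of Khovanskii--Teissier for \emph{movable} rather than merely nef classes. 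To avoid circularity with Theorem~\ref{thm sigma strict logcon}, the robust route is instead to pass to Fujita approximations of $L_{1}$ and $L_{2}$, invoke the classical Teissier proportionality for big and nef divisors, and verify that equality survives the limit; the delicate point is controlling the interaction of the threshold $s$ and the positive product $\langle L_{1}^{n-1}\rangle$ with the approximation. The differentiability of $\vol$ at $L_{1}$ and the positive-product derivative formula are used throughout but are available by the references cited in Section~\ref{section preliminaries}.
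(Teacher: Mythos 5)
Your inequality argument and the trivial direction of the equality case are fine, but the converse has a genuine gap. You reduce the equality case to the statement ``equality in Khovanskii--Teissier for big movable classes implies proportionality.'' In this paper that statement is Proposition \ref{prop teissier movable}, whose proof of the equality characterization \emph{uses} Lemma \ref{lemma psefconstantbound} (via the Diskant inequality extended to movable classes), and Theorem \ref{thm sigma strict logcon} is downstream of both. So the reduction is circular, as you suspected. Your proposed escape route --- Fujita approximation plus classical Teissier proportionality for nef classes, with ``equality surviving the limit'' --- is not carried out, and it is exactly the hard point: equality of the limits does not force the approximants to be near equality, and even approximate equality for the approximants only yields approximate proportionality if one has a quantitative stability estimate, which is precisely what the Diskant inequality supplies. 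In other words, the machinery you would need to close the gap is the machinery the paper builds \emph{after} this lemma, using this lemma as input.

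The paper's actual argument is much more elementary and avoids Khovanskii--Teissier entirely. If $\vol(L_{1}) = s^{n}\vol(L_{2}) = \vol(sL_{2})$, then the decomposition $L_{1} = sL_{2} + (L_{1}-sL_{2})$ has a movable part $sL_{2}$ of full volume and a pseudo-effective remainder; by the uniqueness of such decompositions (\cite[Proposition 5.3]{fl14}, identifying them with the $\sigma$-decomposition of \cite{Nak04}) one gets $P_{\sigma}(L_{1}) = sL_{2}$, and since $L_{1}$ is movable, $L_{1} = P_{\sigma}(L_{1}) = sL_{2}$. You should also note that the lemma is stated for arbitrary projective $X$, so a reduction to a resolution (comparing the threshold $s$ upstairs and downstairs via positive parts, and using injectivity of the capping map $N^{1}(X) \to N_{n-1}(X)$ for the equality case) is needed; your proposal works only on a smooth $X$ where the $\mathcal{C}^{1}$ regularity and the derivative formula for $\vol$ are available.
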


\begin{proof}
We first prove the case when $X$ is smooth.  Certainly we have $\vol(L_{1}) \geq \vol(sL_{2}) = s^{n}\vol(L_{2})$.  If they are equal, then since $sL_{2}$ is movable and $L_{1}-sL_{2}$ is pseudo-effective we get a Zariski decomposition of $$L_1 = sL_2 +(L_1 -sL_2)$$
in the sense of \cite{fl14}. By \cite[Proposition 5.3]{fl14}, this decomposition coincides with the numerical version of the $\sigma$-decomposition of \cite{Nak04} so that $P_{\sigma}(L_{1}) = sL_{2}$.  Since $L_{1}$ is movable, we obtain equality $L_{1}=sL_{2}$.

For arbitrary $X$, let $\phi: X' \to X$ be a resolution.  The inequality follows by pulling back $L_{1}$ and $L_{2}$ and replacing them by their positive parts.  Indeed using the numerical analogue of \cite[III.1.14 Proposition]{Nak04} we see that $\phi^{*}L_{1} - sP_{\sigma}(\phi^{*}L_{2})$ is pseudo-effective if and only if $P_{\sigma}(\phi^{*}L_{1}) - sP_{\sigma}(\phi^{*}L_{2})$ is pseudo-effective, so that $s$ can only go up under this operation.   To characterize the equality, recall that if $L_{1}$ and $L_{2}$ are movable and $P_{\sigma}(\phi^{*}L_{1}) = sP_{\sigma}(\phi^{*}L_{2})$ as elements of $N_{n-1}(X)$, then $L_{1} = sL_{2}$ as elements of $N^{1}(X)$ by the injectivity of the capping map.
\end{proof}


\begin{prop} \label{prop teissier movable}
Let $X$ be a projective variety of dimension $n$.  Let $L_{1},L_{2}$ be big and movable divisor classes.  Then
\begin{equation*}
\langle L_{1}^{n-1} \rangle \cdot L_{2} \geq \vol(L_{1})^{n-1/n} \vol(L_{2})^{1/n}
\end{equation*}
with equality if and only if $L_{1}$ and $L_{2}$ are proportional.
\end{prop}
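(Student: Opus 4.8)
The plan is to reduce to the case of smooth $X$ and then to extract both the inequality and its equality clause from the behaviour of $\vol^{1/n}$ along the one-parameter family $L_1 + tL_2$, using the already-established superadditivity of Theorem \ref{thm sigma strict logcon} together with the differentiability of the volume on the big cone. First, for the singular reduction I would argue exactly as in the proof of Lemma \ref{lemma psefconstantbound}: choose a resolution $\phi\colon X' \to X$ and replace $L_1, L_2$ by the positive parts $P_{\sigma}(\phi^{*}L_i)$, which are big and movable on $X'$. Using the projection formula together with the facts that $\langle(\phi^{*}L_1)^{n-1}\rangle$ depends only on $P_{\sigma}(\phi^{*}L_1)$ and pushes forward to $\langle L_1^{n-1}\rangle$, and that $\vol(L_i) = \vol(P_{\sigma}(\phi^{*}L_i))$, one checks that neither side of the claimed inequality changes. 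Thus it suffices to treat smooth $X$ with $L_1, L_2$ big and movable, where $P_{\sigma}(L_i) = L_i$.

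For the inequality on smooth $X$, set $g(t) = \vol(L_1 + tL_2)^{1/n}$, which is concave for $t$ in the interval where $L_1 + tL_2$ is big (in particular on $[0,\infty)$) since $\vol^{1/n}$ is superadditive and homogeneous. Superadditivity gives $g(t) \ge g(0) + t\,\vol(L_2)^{1/n}$ for $t \ge 0$, while $g$ is differentiable at $0$ with $g'(0) = \vol(L_1)^{(1-n)/n}\,\langle L_1^{n-1}\rangle \cdot L_2$, by the Boucksom--Favre--Jonsson / Lazarsfeld--Musta\c{t}\u{a} derivative formula $\tfrac{d}{dt}\vol(L_1 + tL_2)\big|_{t=0} = n\,\langle L_1^{n-1}\rangle \cdot L_2$ recalled in Section \ref{section preliminaries}. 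Comparing the right derivatives at $t=0$ of the two sides of the superadditivity estimate yields $g'(0) \ge \vol(L_2)^{1/n}$, which after multiplying by $\vol(L_1)^{(n-1)/n}$ is precisely $\langle L_1^{n-1}\rangle \cdot L_2 \ge \vol(L_1)^{(n-1)/n}\vol(L_2)^{1/n}$. The converse direction of the equality statement is immediate: if $L_2 = cL_1$ then $\langle L_1^{n-1}\rangle \cdot L_2 = c\,\langle L_1^{n-1}\rangle \cdot L_1 = c\,\vol(L_1)$, which matches the right-hand side since $\langle L_1^{n-1}\rangle \cdot L_1 = \vol(L_1)$.

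For the equality clause, suppose $\langle L_1^{n-1}\rangle \cdot L_2 = \vol(L_1)^{(n-1)/n}\vol(L_2)^{1/n}$. Consider the gap function $\psi(t) = g(t) - g(0) - t\,\vol(L_2)^{1/n}$; it is concave on $[0,\infty)$, satisfies $\psi \ge 0$ and $\psi(0) = 0$, and by the hypothesis $\psi'(0) = 0$. Concavity then forces $\psi'(t) \le \psi'(0) = 0$, so $\psi$ is non-increasing and hence $\psi \le 0$; together with $\psi \ge 0$ this gives $\psi \equiv 0$ on $[0,\infty)$. Evaluating at $t=1$ yields $\vol(L_1 + L_2)^{1/n} = \vol(L_1)^{1/n} + \vol(L_2)^{1/n}$, i.e. equality in Theorem \ref{thm sigma strict logcon}. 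That theorem then forces $P_{\sigma}(L_1)$ and $P_{\sigma}(L_2)$ to be proportional; since $L_1, L_2$ are movable we have $P_{\sigma}(L_i) = L_i$, so $L_1$ and $L_2$ are proportional.

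The main obstacle I anticipate is the singular reduction rather than the smooth one-variable calculus: one must verify carefully that passing to a resolution and to positive parts leaves both $\langle L_1^{n-1}\rangle \cdot L_2$ and $\vol(L_1)^{(n-1)/n}\vol(L_2)^{1/n}$ invariant (invoking the pushforward and monotonicity properties of positive products, the projection formula, and the injectivity of the capping map $N^{1}(X) \to N_{n-1}(X)$ to recover honest proportionality of classes on $X$ at the very end). A secondary technical point is confirming that both the derivative formula and the superadditivity apply on the full big (not merely nef) cone, which is exactly what Theorem \ref{thm sigma strict logcon} and the conventions of Section \ref{section preliminaries} are designed to supply. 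I note that, alternatively, the equality clause could be deduced abstractly from Theorem \ref{thrm teissier} (the implication (1) $\Rightarrow$ (2)) applied with $f = \vol$ and $\mathcal{C}_{T}$ the big movable cone, once one checks that this cone lies in $\mathcal{C}_{f}$ and that $\vol$ is $+$-differentiable there.
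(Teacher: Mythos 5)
Your singular-to-smooth reduction and your derivation of the inequality are fine: differentiating the superadditivity estimate $\vol(L_1+tL_2)^{1/n}\geq \vol(L_1)^{1/n}+t\vol(L_2)^{1/n}$ at $t=0$ via the derivative formula of \cite{bfj09} is a clean alternative to the paper's route (which instead passes to Fujita approximations and applies the Khovanskii--Teissier inequality for nef classes upstairs). The converse direction of the equality clause is also correct. The problem is the forward direction: your gap-function argument correctly reduces the hypothesis to $\vol(L_1+L_2)^{1/n}=\vol(L_1)^{1/n}+\vol(L_2)^{1/n}$, but you then invoke Theorem \ref{thm sigma strict logcon} to conclude that $P_\sigma(L_1)$ and $P_\sigma(L_2)$ are proportional. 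In this paper that theorem is \emph{not} a prior input: its content for big and movable classes is Proposition \ref{strictlyconvexvol}, whose proof reads ``this follows immediately from Proposition \ref{prop teissier movable} and Theorem \ref{thrm teissier}.'' So the strict log concavity on the big and movable cone is a \emph{consequence} of the very proposition you are proving, and your argument is circular. The only previously available version is \cite[Theorem D]{bfj09}, which covers big and \emph{nef} classes; the whole point of this proposition is to extend the proportionality statement beyond the nef cone. Your fallback suggestion (Theorem \ref{thrm teissier}, (1) $\Rightarrow$ (2)) has the same defect, since hypothesis (1) is precisely the strict log concavity you do not yet have.

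What the paper does instead is to import a \emph{quantitative} form of the equality case: the Diskant inequality \cite[Theorem F]{bfj09} for the big and nef approximants $A_{1,\epsilon}, A_{2,\epsilon}$, namely
\begin{equation*}
(A_{1,\epsilon}^{n-1}\cdot A_{2,\epsilon})^{n/n-1}-\vol(A_{1,\epsilon})\vol(A_{2,\epsilon})^{1/n-1}\geq \bigl((A_{1,\epsilon}^{n-1}\cdot A_{2,\epsilon})^{1/n-1}-s_\epsilon\vol(A_{2,\epsilon})^{1/n-1}\bigr)^n,
\end{equation*}
which survives the limit $\epsilon\to 0$ and, together with Lemma \ref{lemma psefconstantbound} (whose equality case is settled directly via the $\sigma$-decomposition, not via log concavity), forces $s_L^n=\vol(L_1)/\vol(L_2)$ and hence proportionality. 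To repair your proof you would need to replace the appeal to Theorem \ref{thm sigma strict logcon} by an argument of this kind, i.e.\ something that controls the equality case using only results already available at this point in the paper.
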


\begin{proof}
We first suppose $X$ is smooth.  Set $s_{L}$ to be the largest real number such that $L_{1} - s_{L}L_{2}$ is pseudo-effective, and fix an ample divisor $H$ on $X$.

For any $\epsilon > 0$, by taking sufficiently good Fujita approximations we may find a birational map $\phi_{\epsilon}: Y_{\epsilon} \to X$ and ample divisor classes $A_{1,\epsilon}$ and $A_{2,\epsilon}$ such that
\begin{itemize}
\item $\phi_{\epsilon}^{*}L_{i} - A_{i,\epsilon}$ is pseudo-effective for $i=1,2$;
\item $\vol(A_{i,\epsilon}) > \vol(L_i) - \epsilon$ for $i=1,2$;
\item $\phi_{\epsilon *} A_{i,\epsilon}$ is in an $\epsilon$-ball around $L_{i}$ for $i=1,2$.
\end{itemize}
Furthermore:
\begin{itemize}
\item By applying the argument of \cite[Theorem 6.22]{fl14}, we may ensure
\begin{equation*}
\phi_{\epsilon}^{*}(\langle L_{1}^{n-1} \rangle - \epsilon H^{n-1}) \preceq A_{1,\epsilon}^{n-1} \preceq \phi_{\epsilon}^{*}(\langle L_{1}^{n-1} \rangle + \epsilon H^{n-1}).
\end{equation*}
\item Set $s_{\epsilon}$ to be the largest real number such that $A_{1,\epsilon} - s_{\epsilon}A_{2,\epsilon}$ is pseudo-effective.  Then we may ensure that $s_{\epsilon} < s_{L} + \epsilon$.
\end{itemize}
By the Khovanskii-Teissier inequality for nef divisor classes, we have
$$
 (A_{1,\epsilon}^{n-1} \cdot A_{2,\epsilon})^{n/n-1}\geq \vol(A_{1,\epsilon}) \vol(A_{2,\epsilon})^{1/n-1}.
$$
Note that $\langle L^{n-1} \rangle \cdot L_{2}$ is approximated by $A_{1,\epsilon}^{n-1} \cdot A_{2,\epsilon}$ by the projection formula.  Taking a limit as $\epsilon$ goes to $0$, we see that
\begin{equation*} \label{eqstar}
\langle L_{1}^{n-1} \rangle \cdot L_{2} \geq \vol(L_{1})^{n-1/n} \vol(L_{2})^{1/n}.  \tag{$\star$}
\end{equation*}
On the other hand, the Diskant inequality for big and nef divisors in \cite[Theorem F]{bfj09} implies that
\begin{align*}
(A_{1,\epsilon}^{n-1} \cdot A_{2,\epsilon})^{n/n-1} - & \vol(A_{1,\epsilon}) \vol(A_{2,\epsilon})^{1/n-1} \\
& \geq \left(  (A_{1,\epsilon}^{n-1} \cdot A_{2,\epsilon})^{1/n-1} - s_{\epsilon} \vol(A_{2,\epsilon})^{1/n-1}  \right)^{n} \\
& \geq \left( (A_{1,\epsilon}^{n-1} \cdot A_{2,\epsilon})^{1/n-1} - (s_{L} + \epsilon) \vol(A_{2,\epsilon})^{1/n-1}  \right)^{n}.
\end{align*}
Taking a limit as $\epsilon$ goes to $0$ again, we see that
\begin{align*}
(\langle L_{1}^{n-1} \rangle \cdot L_{2} )^{n/n-1} - & \vol(L_{1}) \vol(L_{2})^{1/n-1} \\
& \geq \left(  (\langle L_1 ^{n-1} \rangle \cdot L_{2} )^{1/n-1} - s_{L} \vol(L_{2})^{1/n-1}  \right)^{n}.
\end{align*}
Thus we extend the Diskant inequality to big and movable divisor classes.
Lemma \ref{lemma psefconstantbound}, equation (\ref{eqstar}) and the above Diskant inequality together show that
\begin{equation*}
\langle L_{1}^{n-1} \rangle \cdot L_{2} = \vol(L_{1})^{n-1/n} \vol(L_{2})^{1/n}
\end{equation*}
if and only if $L_{1}$ and $L_{2}$ are proportional.

Now suppose $X$ is singular.  The inequality can be computed by passing to a resolution $\phi: X' \to X$ and replacing $L_{1}$ and $L_{2}$ by their positive parts, since the left hand side can only decrease under this operation.  To characterize the equality, recall that if $L_{1}$ and $L_{2}$ are movable and $P_{\sigma}(\phi^{*}L_{1}) = sP_{\sigma}(\phi^{*}L_{2})$ as elements of $N_{n-1}(X)$, then $L_{1} = sL_{2}$ as elements of $N^{1}(X)$ by the injectivity of the capping map.
\end{proof}

\begin{rmk}
As a byproduct of the proof above, we get the Diskant inequality for big and movable divisor classes.
\end{rmk}

\begin{rmk}\label{rmk teissier prop movable}
In the analytic setting, applying Proposition \ref{prop teissier movable} and the same method as \cite{lehmannxiao2015a},
it is not hard to generalize Proposition \ref{prop teissier movable} to any number of big and movable divisor classes provided we have sufficient regularity for degenerate Monge-Amp\`ere equations in big classes:
\begin{itemize}
\item Let $L_1, ..., L_n$ be $n$ big divisor classes over a smooth complex projective variety $X$, then we have
\begin{align*}
\langle L_1 \cdot ... \cdot L_n \rangle \geq \vol(L_1)^{1/n}\cdot ... \cdot \vol(L_n)^{1/n}
\end{align*}
where the equality is obtained if and only if $P_{\sigma}(L_1),...,P_{\sigma}(L_n)$ are proportional.
\end{itemize}
We only need to characterize the equality situation. To see this, we need the fact that the above positive intersection $\langle L_1 \cdot ... \cdot L_n \rangle$ depends only on the positive parts $P_{\sigma}(L_i)$, which follows from the analytic construction of positive product \cite[Proposition 3.2.10]{Bou02}. Then by the method in \cite{lehmannxiao2015a}
where we apply \cite{BEGZ10MAbig} or \cite[Theorem D]{demailly2014holder},
we reduce it to the case of a pair of divisor classes, e.g. we get
\begin{align*}
\langle P_{\sigma}(L_1) ^{n-1} \cdot P_{\sigma}(L_2)\rangle = \vol(L_1)^{n-1/n}\vol(L_2)^{1/n}.
\end{align*}
By the definition of positive product we always have
\begin{align*}
\langle P_{\sigma}(L_1) ^{n-1} \cdot P_{\sigma}(L_2)\rangle \geq \langle P_{\sigma}(L_1) ^{n-1} \rangle \cdot P_{\sigma}(L_2)
\geq \vol(L_1)^{n-1/n}\vol(L_2)^{1/n},
\end{align*}
this then implies the equality
\begin{align*}
\langle P_{\sigma}(L_1) ^{n-1}\rangle  \cdot P_{\sigma}(L_2) = \vol(L_1)^{n-1/n}\vol(L_2)^{1/n}.
\end{align*}
By Proposition \ref{prop teissier movable}, we immediately obtain the desired result.
\end{rmk}

\begin{cor}
\label{cor mov-intersection after}
Let $X$ be a smooth projective variety of dimension $n$. Let $\alpha \in \Mov_{1}(X)$ be a big movable curve class.
All big divisor classes $L$ satisfying $\alpha=\langle L^{n-1}\rangle$ have the same positive part $P_\sigma(L)$.
\end{cor}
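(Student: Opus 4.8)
The plan is to reduce to the case of big and \emph{movable} divisor classes and then extract everything from the equality characterization in Proposition \ref{prop teissier movable}.

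First I would carry out the reduction. For any big divisor class $L$, its positive part $P_\sigma(L)$ is again big, since $\vol(P_\sigma(L)) = \vol(L) > 0$, and movable by definition of $\Mov^1(X)$; moreover the positive product $\langle L^{n-1}\rangle$ is unchanged upon passing to the positive part, so $\langle L^{n-1}\rangle = \langle P_\sigma(L)^{n-1}\rangle$. Hence, given two big classes $L, L'$ with $\langle L^{n-1}\rangle = \langle L'^{n-1}\rangle = \alpha$, I may set $M = P_\sigma(L)$ and $M' = P_\sigma(L')$, which are big and movable with $\langle M^{n-1}\rangle = \langle M'^{n-1}\rangle = \alpha$. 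It then suffices to prove that $M = M'$, since this is exactly the assertion $P_\sigma(L) = P_\sigma(L')$.

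The key numerical input is the identity $\langle M^{n-1}\rangle \cdot M = \vol(M)$ for a big movable class $M$. This follows directly from Proposition \ref{prop teissier movable} applied with both arguments equal to $M$: the inequality reads $\langle M^{n-1}\rangle \cdot M \geq \vol(M)^{(n-1)/n}\vol(M)^{1/n} = \vol(M)$, and equality holds because $M$ is trivially proportional to itself. In particular $\alpha \cdot M = \vol(M)$ and $\alpha \cdot M' = \vol(M')$. Next I would feed the pair $(M, M')$ into Proposition \ref{prop teissier movable}: the left-hand side $\langle M^{n-1}\rangle \cdot M' = \alpha \cdot M' = \vol(M')$ satisfies $\vol(M') \geq \vol(M)^{(n-1)/n}\vol(M')^{1/n}$, which rearranges to $\vol(M') \geq \vol(M)$. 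Running the same argument with the roles of $M$ and $M'$ reversed gives $\vol(M) \geq \vol(M')$, so $\vol(M) = \vol(M')$. But then the displayed inequality is in fact an equality, so the equality clause of Proposition \ref{prop teissier movable} forces $M$ and $M'$ to be proportional, say $M = cM'$; comparing volumes yields $c^n = 1$, hence $c = 1$ and $M = M'$, completing the argument.

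The one delicate point is the reduction step, specifically the fact that $\langle L^{n-1}\rangle$ is insensitive to replacing $L$ by $P_\sigma(L)$ (equivalently, that the negative part $N_\sigma(L)$ contributes nothing to the positive product). I would justify this by the standard behaviour of positive products under Fujita approximation, as already invoked in Remark \ref{rmk teissier prop movable}. Everything after the reduction is a short, self-contained consequence of the equality case of Proposition \ref{prop teissier movable}, so I do not anticipate any further obstacle.
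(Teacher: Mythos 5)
Your proposal is correct and follows essentially the same route as the paper: both arguments rest on the identity $\langle M^{n-1}\rangle\cdot M=\vol(M)$, the inequality of Proposition \ref{prop teissier movable} applied symmetrically to deduce $\vol(M)=\vol(M')$, and the equality clause of that proposition plus the $\sigma$-decomposition to conclude proportionality and hence equality. The paper's version is merely terser, leaving the reduction to positive parts and the invariance of $\langle L^{n-1}\rangle$ under $L\mapsto P_\sigma(L)$ implicit, whereas you spell these out.
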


\begin{proof}
Suppose $L_{1}$ and $L_{2}$ have the same positive product.  We have $\vol(L_{1}) = \langle L_{2}^{n-1} \rangle \cdot L_{1}$ so that $\vol(L_{1}) \geq \vol(L_{2})$.  By symmetry we obtain the reverse inequality, hence equality everywhere, and we conclude by Proposition \ref{prop teissier movable} and the $\sigma$-decomposition for smooth varieties.
\end{proof}

As a consequence of Proposition \ref{prop teissier movable}, we show the strict log concavity of the volume function $\vol$ on the cone of big and movable divisors.

\begin{prop} \label{strictlyconvexvol}
Let $X$ be a projective variety of dimension $n$. Then the volume function $\vol$ is strictly log concave on the cone of big and movable divisor classes.
\end{prop}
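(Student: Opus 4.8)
The plan is to deduce the statement directly from the Teissier proportionality established in Proposition \ref{prop teissier movable}, via a one-parameter integration argument. Fix big and movable classes $L_{1}, L_{2}$ that are not proportional, write $V_{i} = \vol(L_{i})^{1/n}$, and set $L_{t} = L_{1} + tL_{2}$ for $t \in [0,1]$. Since $L_{1}$ is big and $L_{2}$ is big and movable, each $L_{t}$ is again big and movable, so $h(t) := \vol(L_{t})^{1/n}$ is concave (Brunn--Minkowski) and, using the differentiability of $\vol$ on the big cone together with $D\vol(L) = n\langle L^{n-1}\rangle$ recalled in Section \ref{section preliminaries}, it is $\mathcal{C}^{1}$ on $[0,1]$ with
\begin{equation*}
h'(t) = \frac{\langle L_{t}^{n-1}\rangle \cdot L_{2}}{\vol(L_{t})^{(n-1)/n}}.
\end{equation*}

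The key step is to apply Proposition \ref{prop teissier movable} to the pair of big movable classes $L_{t}$ and $L_{2}$, which gives
\begin{equation*}
\langle L_{t}^{n-1}\rangle \cdot L_{2} \geq \vol(L_{t})^{(n-1)/n}\vol(L_{2})^{1/n},
\end{equation*}
and hence $h'(t) \geq V_{2}$, with equality for a given $t$ if and only if $L_{t}$ and $L_{2}$ are proportional. For any $t \in [0,1]$, proportionality of $L_{t} = L_{1} + tL_{2}$ and $L_{2}$ would force $L_{1}$ to be proportional to $L_{2}$, contrary to assumption; thus $h'(t) > V_{2}$ strictly for every $t \in [0,1]$. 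Integrating this strict pointwise inequality and using $h(0) = V_{1}$, the fundamental theorem of calculus yields $\vol(L_{1}+L_{2})^{1/n} = h(1) > h(0) + V_{2} = V_{1} + V_{2}$, which is precisely the strict log concavity asserted (in the sense of Definition \ref{defn generalized log concave}). I would also note that the same conclusion follows formally by invoking Theorem \ref{thrm teissier}: taking $\mathcal{C} = \Eff^{1}(X)$, $f = \vol \in \HConc_{n}(\Eff^{1}(X))$, and $\mathcal{C}_{T}$ the (convex) big and movable cone, Proposition \ref{prop teissier movable} is exactly Teissier proportionality $(2)$, and the implication $(2) \implies (1)$ for convex $\mathcal{C}_{T}$ gives the result.

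For singular $X$, I would reduce to the inequality and equality characterization already proved in Proposition \ref{prop teissier movable}: pass to a resolution $\phi$, replace the classes by their positive parts, and use the injectivity of the capping map $N^{1}(X) \to N_{n-1}(X)$ to recover proportionality of the $L_{i}$ from proportionality of $P_{\sigma}(\phi^{*}L_{i})$. The only point requiring care — and the step I expect to be the main obstacle — is the regularity needed to run the integration cleanly: one must confirm that $L_{t}$ remains big and movable throughout $[0,1]$ (so that Proposition \ref{prop teissier movable} genuinely applies at every $t$, including the endpoint $t=0$) and that $h$ is $\mathcal{C}^{1}$ up to the endpoints, which rests on the differentiability of the volume function on the big cone. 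All the genuine analytic content, however, is already isolated in Proposition \ref{prop teissier movable}; the passage from there to strict log concavity is essentially formal.
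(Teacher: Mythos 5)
Your proposal is correct and rests on the same key input as the paper, namely the Teissier proportionality of Proposition \ref{prop teissier movable}; indeed your closing remark invoking Theorem \ref{thrm teissier} (the implication $(2)\implies(1)$ for the convex big and movable cone) is precisely the paper's one-line proof. Your integration argument is simply a correct, explicit unfolding of that formal implication, with the endpoint and regularity concerns you flag already covered by the convexity of the big and movable cone and the $\mathcal{C}^{1}$ property of $\vol$ on the big cone.
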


\begin{proof}
Since the big and movable cone is convex and since the derivative of $\vol$ is continuous, this follows immediately from Proposition \ref{prop teissier movable} and Theorem \ref{thrm teissier}.
\end{proof}

\subsection{The function $\mathfrak{M}$}

We now return to the study of the function $\mathfrak{M}$. We are in the situation:
\begin{align*}
\mathcal{C}=\Eff^1 (X), \quad f=\vol,\quad \mathcal{C}^*=\Mov_1 (X),\quad \mathcal{H}f= \mathfrak{M}.
\end{align*}
Note that $\mathcal{C}^*=\Mov_1 (X)$ follows from the main result of \cite{BDPP13}.

As preparation for using the polar transform theory, we recall the analytic properties of the volume function for divisors on smooth varieties.  By \cite{bfj09} the volume function on the pseudo-effective cone of divisors is differentiable on the big cone (with $D(L) = \langle L^{n-1} \rangle$).  In the notation of Section \ref{legendresection} the cone $\Eff^{1}(X)_{\vol}$ coincides with the big cone, so that $\vol$ is $+$-differentiable.  The volume function is $n$-concave, and is strictly $n$-concave on the big and movable cone by Proposition \ref{strictlyconvexvol}.  Furthermore, it admits a strong Zariski decomposition with respect to the movable cone of divisors using the $\sigma$-decomposition of \cite{Nak04} and Proposition \ref{strictlyconvexvol}.

\begin{rmk} \label{firstsingrmk}
Note that if $X$ is not smooth (or at least $\mathbb{Q}$-factorial), then it is unclear whether $\vol$ admits a Zariski decomposition structure with respect to the cone of movable divisors.  For this reason, we will focus on smooth varieties in this section.  See Remark \ref{singularcasermk} for more details.
\end{rmk}

Our first task is to understand the behaviour of $\mathfrak{M}$ on the boundary of the movable cone of curves.  Note that $\vol$ does not satisfy a sublinear condition, so that $\mathfrak{M}$ may not vanish on the boundary of $\Mov_{1}(X)$.

\begin{lem} \label{lem computing m}
Let $X$ be a smooth projective variety of dimension $n$ and let $\alpha$ be a movable curve class.  Then $\mathfrak{M}(\alpha) = 0$ if and only if $\alpha$ has vanishing intersection against a non-zero movable divisor class $L$.
\end{lem}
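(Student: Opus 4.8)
The plan is to prove both implications, handling the degenerate case $\alpha = 0$ separately (there $\mathfrak{M}(\alpha) = 0$ and every divisor class meets $\alpha$ in $0$, so both sides hold), and assuming $\alpha \neq 0$ from now on. I would first record a reduction used throughout: since $\alpha \in \Mov_{1}(X) = \Eff^{1}(X)^{*}$ is non-zero, every big class lies in the interior of $\Eff^{1}(X)$ and therefore pairs strictly positively with $\alpha$. Consequently any movable class $L$ with $L \cdot \alpha = 0$ is automatically non-big, i.e.\ $\vol(L) = 0$. This already shows that the lemma is really a statement about the boundary behaviour of $\mathfrak{M}$, consistent with the fact (noted just before the lemma) that $\vol$ fails the sublinear condition.

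For the forward direction, suppose $\mathfrak{M}(\alpha) = 0$. Then there are big classes $L_{k}$ with $(L_{k}\cdot\alpha)/\vol(L_{k})^{1/n} \to 0$; rescaling so that $\vol(L_{k}) = 1$, this reads $L_{k}\cdot\alpha \to 0$. Replacing $L_{k}$ by its positive part $P_{\sigma}(L_{k})$ preserves $\vol$ and can only decrease $L_{k}\cdot\alpha$, since the negative part is effective (hence pseudo-effective) and pairs non-negatively with the movable class $\alpha$; so I may assume each $L_{k}$ is big and movable with $\vol(L_{k}) = 1$ and $L_{k}\cdot\alpha \to 0$. If the $L_{k}$ stayed bounded, a subsequential limit would be a movable class of volume $1$ (by continuity of $\vol$), hence big, with zero intersection against $\alpha$ --- impossible for $\alpha \neq 0$ by the reduction above. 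Therefore $\|L_{k}\| \to \infty$, and the unit vectors $L_{k}/\|L_{k}\|$ subconverge in the closed cone $\Mov^{1}(X)$ to a non-zero movable class $L$ with $L\cdot\alpha = \lim (L_{k}\cdot\alpha)/\|L_{k}\| = 0$.

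For the converse, let $L \in \Mov^{1}(X)\setminus\{0\}$ satisfy $L\cdot\alpha = 0$ and fix any big movable class $M$. For $t \geq 0$ the class $B_{t} = tL + M$ is big, and the key point is that $B_{t}\cdot\alpha = M\cdot\alpha$ is \emph{constant} in $t$ because $L\cdot\alpha = 0$. Hence $\mathfrak{M}(\alpha) \leq (M\cdot\alpha)^{n/(n-1)}/\vol(B_{t})^{1/(n-1)}$ for every $t$, and it suffices to show $\vol(tL + M) \to \infty$ as $t \to \infty$. To this end, choose an ample class $A$ with $M \succeq A$ (possible since $M$ is big) and let $\nu$ be the numerical dimension of $L$, namely the largest integer with $\langle L^{\nu}\rangle \cdot A^{n-\nu} > 0$. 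Super-additivity of the positive product gives
\begin{equation*}
\vol(tL + M) = \langle (tL+M)^{n}\rangle \geq \binom{n}{\nu} t^{\nu}\,\langle L^{\nu}\cdot M^{n-\nu}\rangle,
\end{equation*}
while monotonicity and the compatibility of the positive product with the nef class $A$ yield $\langle L^{\nu}\cdot M^{n-\nu}\rangle \geq \langle L^{\nu}\cdot A^{n-\nu}\rangle = \langle L^{\nu}\rangle\cdot A^{n-\nu} > 0$. Since $L$ is movable and non-zero we have $\langle L\rangle = L\cap[X] \neq 0$ in $N_{n-1}(X)$ by injectivity of the capping map, so $\langle L\rangle\cdot A^{n-1} = L\cdot A^{n-1} > 0$ and thus $\nu \geq 1$; hence $t^{\nu} \to \infty$ and $\vol(tL+M) \to \infty$, completing the proof.

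I expect the last positivity input to be the main obstacle, and it is exactly where movability is indispensable. For a rigid pseudo-effective $L$ (e.g.\ an effective divisor with $P_{\sigma}(L) = 0$) one has $\langle L\rangle = 0$, so $\nu = 0$ and $\vol(tL + M)$ stays bounded; indeed the lemma is false without the movability hypothesis. The substance of the argument is therefore that movability forces $\langle L\rangle = L \neq 0$, which in turn makes the leading mixed positive product $\langle L^{\nu}\cdot M^{n-\nu}\rangle$ strictly positive and drives the volume to infinity. The remaining ingredients --- super-additivity, monotonicity, and the behaviour of positive products against nef classes --- are standard and citable from \cite{Bou02, bfj09, BDPP13}.
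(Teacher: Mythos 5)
Your proof is correct, and one half of it takes a genuinely different technical route from the paper. The forward direction is essentially the paper's own argument: both of you pass to positive parts $P_{\sigma}(L_{k})$ (using that the movable class $\alpha$ pairs non-negatively with the effective negative part) and extract a non-zero movable accumulation point from a rescaled sequence; the only difference is the normalization --- you divide by $\|L_{k}\|$ and rule out boundedness via strict positivity of $\alpha$ against big classes, while the paper divides by $A^{n-1} \cdot L_{k}$ and bounds that denominator below by $\vol(L_{k})^{1/n}$ using Khovanskii--Teissier. In the converse the strategy is the same (perturb the degenerate direction and show the volume ratio tends to zero) but the volume estimate is obtained differently. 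The paper works with $M + \epsilon A$ for $A$ ample, realizes $M + \tfrac{\epsilon}{2}A$ as the pushforward of an ample class from a modification, and gets $\vol(M+\epsilon A) \geq c\,\epsilon^{n-1}$ from the elementary inequality $\vol(N_{1}+N_{2}) \geq n\,N_{1}^{n-1}\cdot N_{2}$ for nef classes, so the ratio decays like $\epsilon^{1/n}$. You let $t \to \infty$ in $tL + M$ (the same one-parameter family after rescaling, with $\epsilon = 1/t$) and invoke super-additivity and monotonicity of mixed positive products to get $\vol(tL+M) \geq \binom{n}{\nu} t^{\nu} \langle L^{\nu}\rangle \cdot A^{n-\nu}$, with $\nu \geq 1$ because $\langle L \rangle = P_{\sigma}(L) = L \neq 0$ for $L$ movable. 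Your bound is sharper when $\nu > 1$ but relies on the heavier formalism of positive products; the paper's is more self-contained. It is worth noting that the remark immediately following the lemma in the paper records exactly your viewpoint --- that $P_{\sigma}(L) \neq 0$ forces the sublinear boundary condition of order $(n-1)/n$ for $\vol$ on the movable cone --- and observes that this fact ``can be used in the previous proof''; your argument is in effect a worked-out version of that alternative.
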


\begin{proof}
We first show that if there exists some nonzero movable divisor class $M$ such that $\alpha\cdot M=0$ then $\mathfrak{M}(\alpha) = 0$. Fix an ample divisor class $A$.  Note that $M+\epsilon A$ is big and movable for any $\epsilon>0$. Thus there exists some modification $\mu_{\epsilon}: Y_{\epsilon} \rightarrow X$ and an ample divisor class $A_\epsilon$ on $Y_{\epsilon}$ such that $M+\frac{\epsilon}{2} A=\mu_{\epsilon*} A_\epsilon$. So we can write $$M+\epsilon A=\mu_{\epsilon *}\left(A_\epsilon + \frac{\epsilon}{2} \mu_{\epsilon}^*A \right),$$
which implies
\begin{align*}
\vol(M+\epsilon A)&=\vol\left(\mu_{\epsilon*}\left(A_\epsilon + \frac{\epsilon}{2} \mu_{\epsilon}^*A\right) \right)\\
&\geq \vol \left(A_\epsilon + \frac{\epsilon}{2} \mu_{\epsilon}^*A \right)\\
&\geq n \left(\frac{\epsilon}{2} \mu_{\epsilon}^*A \right)^{n-1}\cdot A_\epsilon\\
&\geq c\epsilon^{n-1} A^{n-1}\cdot M.
\end{align*}
Consider the following intersection number
$$
\rho_\epsilon=\alpha \cdot \frac{M+\epsilon A}{\vol(M+\epsilon A)^{1/n}}.
$$
The above estimate shows that $\rho_\epsilon$ tends to zero as $\epsilon$ tends to zero, and so $\mathfrak{M}(\alpha)=0$.

Conversely, suppose that $\mathfrak{M}(\alpha) = 0$.  From the definition of $\mathfrak{M}(\alpha)$, we can take a sequence of big divisor classes $L_k$ with $\vol(L_k)=1$ such that $$\underset{k\rightarrow \infty}{\lim}(\alpha \cdot L_k)^\frac{n}{n-1} = \mathfrak{M}(\alpha).$$
Moreover, let $P_\sigma(L_k)$ be the positive part of $L_k$.  Then we have $\vol(P_\sigma(L_k))=1$ and $$\alpha\cdot P_\sigma(L_k) \leq \alpha\cdot L_k$$ since $\alpha$ is movable. Thus we can assume the sequence of big divisor classes $L_k$ is movable in the beginning.

Fix an ample divisor $A$ of volume $1$, and consider the classes $L_{k}/(A^{n-1} \cdot L_{k})$.  These lie in a compact slice of the movable cone, so they must have a non-zero movable accumulation point $L$, which without loss of generality we may assume is a limit.

Choose a modification $\mu_{\epsilon}: Y_{\epsilon} \to X$ and an ample divisor class $A_{\epsilon,k}$ on $Y$ such that
\begin{align*}
A_{\epsilon,k} \leq \mu_{\epsilon}^{*}L_{k}, \quad \vol(A_{\epsilon,k}) > \vol(L_{k}) - \epsilon
\end{align*}
Then
\begin{equation*}
L_{k} \cdot A^{n-1} \geq A_{\epsilon,k} \cdot \mu_{\epsilon}^{*}A^{n-1} \geq \vol(A_{\epsilon,k})^{1/n}
\end{equation*}
by the Khovanskii-Teissier inequality.  Taking a limit over all $\epsilon$, we find $L_{k} \cdot A^{n-1} \geq \vol(L_{k})^{1/n}$.  Thus
\begin{align*}
L \cdot \alpha  = \lim_{k \to \infty} \frac{L_{k} \cdot \alpha}{L_{k} \cdot A^{n-1}}
 \leq \mathfrak{M}(\alpha)^{n-1/n} = 0.
\end{align*}
\end{proof}

\begin{exmple}
Note that a movable curve class $\alpha$ with positive $\mathfrak{M}$ need not lie in the interior of the movable cone of curves.  A simple example is when $X$ is the blow-up of $\mathbb{P}^{2}$ at one point, $H$ denotes the pullback of the hyperplane class.  For surfaces the functions $\mathfrak{M}$ and $\vol$ coincide, so $\mathfrak{M}(H) = 1$ even though $H$ is on the boundary of $\Mov_{1}(X) = \Nef^{1}(X)$.

It is also possible for a big movable curve class $\alpha$ to have $\mathfrak{M}(\alpha) = 0$.  This occurs for the projective bundle $X = \mathbb{P}_{\mathbb{P}^{1}}(\mathcal{O} \oplus \mathcal{O} \oplus \mathcal{O}(-1))$. There are two natural divisor classes on $X$: the class $f$ of the fibers of the projective bundle and the class $\xi$ of the sheaf $\mathcal{O}_{X/\mathbb{P}^{1}}(1)$.  Using for example \cite[Theorem 1.1]{fulger11} and \cite[Proposition 7.1]{fl14}, one sees that $f$ and $\xi$ generate the algebraic cohomology classes with the relations
$f^{2} = 0$, $\xi^{2}f = -\xi^{3} = 1$ and that $\Mov^{1}(X) = \langle f, \xi \rangle$ and $\Mov_{1}(X) = \langle \xi f, \xi^{2}+\xi f \rangle$.
We see that the big and movable curve class $\xi^{2} + \xi f$ has vanishing intersection against the movable divisor $\xi$ so that $\mathfrak{M}(\xi^{2}+\xi f) = 0$ by Lemma \ref{lem computing m}.
\end{exmple}

\begin{rmk}
Another perspective on Lemma \ref{lem computing m} is provided by the numerical dimension of \cite{Nak04} and \cite{Bou04}.  We recall from \cite{lehmann13} the fact that on a smooth variety the following conditions are equivalent for a class $L \in \Eff^{1}(X)$.  (They both correspond to the non-vanishing of the numerical dimension.)
\begin{itemize}
\item Fix an ample divisor class $A$. For any big class $D$, there is a positive constant $C$ such that $Ct^{n-1} < \vol(L + tA)$ for all $t>0$.
\item $P_{\sigma}(L) \neq 0$.
\end{itemize}
In particular, this implies that $\vol$ satisfies the sublinear boundary condition of order $n-1/n$ when restricted to the movable cone, and this fact can be used in the previous proof.  A variant of this statement in characteristic $p$ is proved by \cite{chms14}.
\end{rmk}

In many ways it is most natural to define $\mathfrak{M}$ using the movable cone of divisors instead of the pseudo-effective cone of divisors.  Conceptually, this coheres with the fact that the polar transform can be calculated using the positive part of a Zariski decomposition. Recall that the positive part $P_{\sigma}(L)$ of a pseudo-effective divisor $L$ has $P_{\sigma}(L) \preceq L$ and $\vol(P_{\sigma}(L)) = \vol(L)$.  Arguing as in Lemma \ref{lem computing m} by taking positive parts, we see that for any $\alpha \in \Mov_{1}(X)$ we have
\begin{align*}
\mathfrak{M}(\alpha)= \inf_{D \textrm{ big and movable}} \left( \frac{D \cdot \alpha}{\vol(D)^{1/n}} \right)^{n/n-1}.
\end{align*}
Thus for $X$ smooth it is perhaps better to consider the following polar transform:
\begin{align*}
\mathcal{C}=\Mov^1 (X), \quad f=\vol,\quad \mathcal{C}^*=\Mov^1 (X)^*,\quad \mathcal{H}f= \mathfrak{M}'.
\end{align*}
Since $\vol$ satisfies a sublinear condition on $\Mov^1 (X)$, the function $\mathfrak{M}'$ is strictly positive exactly in $\Mov^1 (X)^{*\circ}$ and extends to a continuous function over $N_1(X)$.  The relationship between the two functions is given by
\begin{equation*}
\mathfrak{M}'|_{\Mov_{1}(X)} = \mathfrak{M};
\end{equation*}
this follows immediately from the description for $\mathfrak{M}$ earlier in this paragraph.  In fact by Theorem \ref{strong zariski equivalence} $\mathfrak{M}'$ admits a strong Zariski decomposition.  Using the interpretation of positive parts via derivatives as in Theorem \ref{strong zariski equivalence}, the results of \cite{bfj09} and \cite{lm09} show that the positive parts for the Zariski decomposition of $\mathfrak{M}'$ lie in $\Mov_{1}(X)$.  In this way one can think of $\mathfrak{M}$ as the ``Zariski projection'' of $\mathfrak{M}'$.

Note one important consequence of this perspective: Lemma \ref{lem computing m} shows that the subcone of $\Mov_{1}(X)$ where $\mathfrak{M}$ is positive lies in the interior of $\Mov^{1}(X)^{*}$.  Thus this region agrees with $\Mov_{1}(X)_{\mathfrak{M}}$ and $\mathfrak{M}$ extends to a differentiable function on an open set containing this cone by applying Theorem \ref{strong zariski equivalence}.  In particular $\mathfrak{M}$ is $+$-differentiable and continuous on $\Mov_{1}(X)$.

We next prove a refined structure of the movable cone of curves.  Recall that by \cite{BDPP13} the movable cone of curves $\Mov_1(X)$ is generated by the $(n-1)$-self positive products of big divisors.  In other words, any curve class in the interior of $\Mov_{1}(X)$ is a \emph{convex combination} of such positive products.  We show that $\Mov_1(X)$ actually coincides with the closure of such products (which naturally form a cone).

\begin{thrm}
\label{thm mov-intersection after}
Let $X$ be a smooth projective variety of dimension $n$. Then any movable curve class $\alpha$ with $\mathfrak{M}(\alpha) > 0$ has the form
\begin{equation*}
\alpha=\langle L_{\alpha}^{n-1}\rangle
\end{equation*}
for a unique big and movable divisor class $L_{\alpha}$.  We then have $\mathfrak{M}(\alpha) = \vol(L_{\alpha})$ and any big and movable divisor computing $\mathfrak{M}(\alpha)$ is proportional to $L_{\alpha}$.
\end{thrm}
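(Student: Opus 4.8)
The plan is to reduce the entire statement to a single first-order (variational) computation performed at a divisor class that computes the infimum defining $\mathfrak{M}(\alpha)$. The key structural fact I will exploit is that the derivative of the volume is the positive product, $D\vol(L) = n\langle L^{n-1}\rangle$ on the big cone (as recorded from \cite{bfj09}), so that the stationarity condition for the optimization problem automatically produces a class of the form $\langle L^{n-1}\rangle$. The three assertions (existence/form, value, proportionality) will all fall out of this computation, and uniqueness will come directly from Corollary \ref{cor mov-intersection after}.

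The first task is to show that, because $\mathfrak{M}(\alpha) > 0$, the infimum is actually \emph{achieved} by a big and movable class. Using the reformulation $\mathfrak{M}(\alpha) = \inf_{D\ \mathrm{big\ movable}}(D\cdot\alpha/\vol(D)^{1/n})^{n/n-1}$ from the excerpt, I normalize to $\vol(L)=1$ and minimize $L\cdot\alpha$ over movable classes of volume $1$. I would take a minimizing sequence $L_k$ and argue that it stays bounded: if $\|L_k\|\to\infty$ along a subsequence, then $L_k/\|L_k\|$ converges to a nonzero movable class $L_\infty$ with $\vol(L_\infty)=\lim \|L_k\|^{-n}=0$ and $L_\infty\cdot\alpha=\lim (L_k\cdot\alpha)/\|L_k\| = 0$ (since $L_k\cdot\alpha$ converges to the positive constant $\mathfrak{M}(\alpha)^{(n-1)/n}$), and Lemma \ref{lem computing m} then forces $\mathfrak{M}(\alpha)=0$, a contradiction. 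Hence the sequence is bounded; by continuity of $\vol$ and closedness of $\Mov^1(X)$ a convergent subsequence yields a big and movable minimizer $L$ with $L\cdot\alpha = \mathfrak{M}(\alpha)^{(n-1)/n}>0$ and $\vol(L)=1$.

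The heart of the argument is the stationarity condition at this $L$. Since the big cone is open, $\vol$ is $C^1$ there, and the infimum over big movable classes equals the infimum over all big classes (replace by positive parts), $L$ is an \emph{interior} minimizer of the scale-invariant quotient $D\mapsto (D\cdot\alpha)/\vol(D)^{1/n}$. Differentiating in an arbitrary direction $D'\in N^1(X)$ and setting the derivative to zero yields, after clearing the volume factors, the vector identity $\vol(L)\,\alpha = (L\cdot\alpha)\,\langle L^{n-1}\rangle$ in $N_1(X)$. Putting $L_\alpha := (L\cdot\alpha/\vol(L))^{1/(n-1)}L$, which is again big and movable, gives $\alpha = \langle L_\alpha^{n-1}\rangle$, and a one-line homogeneity computation gives $\vol(L_\alpha)=\mathfrak{M}(\alpha)$. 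I expect the crux to lie in the interplay between this step and the previous one: the calculus itself is routine, and the real content is guaranteeing that the minimizer exists and sits in the open big cone (so that \emph{all} directional derivatives vanish), which is exactly what the hypothesis $\mathfrak{M}(\alpha)>0$ buys us through Lemma \ref{lem computing m}.

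Finally, uniqueness is immediate: if $L_1,L_2$ are big and movable with $\langle L_1^{n-1}\rangle=\alpha=\langle L_2^{n-1}\rangle$, then by Corollary \ref{cor mov-intersection after} they share a common positive part, and since a movable class equals its own positive part we get $L_1=L_2$. For the last assertion I would simply rerun the same first-order computation at an arbitrary big and movable $D$ computing $\mathfrak{M}(\alpha)$: it again produces $\alpha=\langle D_\alpha^{n-1}\rangle$ with $D_\alpha$ a positive scalar multiple of $D$, whence uniqueness forces $D_\alpha=L_\alpha$ and $D$ is proportional to $L_\alpha$. I note that this direct route uses essentially only the differentiability of $\vol$; alternatively one could invoke the strong Zariski decomposition of Theorem \ref{strong zariski equivalence} and argue that the negative part of $\alpha$ vanishes, but the variational computation seems cleaner and makes the proportionality statement transparent.
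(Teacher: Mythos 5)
Your proof is correct, and while it rests on the same two pillars as the paper's argument --- the differentiability of $\vol$ on the big cone with derivative $n\langle L^{n-1}\rangle$ from \cite{bfj09}, and Lemma \ref{lem computing m} to guarantee a minimizer --- the execution is genuinely different. The paper applies the abstract strong Zariski decomposition (Theorem \ref{strong zariski equivalence}) to the polar transform $\mathfrak{M}'$ of $\vol$ restricted to $\Mov^1(X)$, which a priori only yields $\alpha = \langle L_\alpha^{n-1}\rangle + n_\alpha$ with $n_\alpha \in \Mov^1(X)^*$; it must then kill $n_\alpha$ by a separate argument, using movability of $\alpha$ to show $n_\alpha \cdot E \geq 0$ for every pseudo-effective $E$, and then $n_\alpha \cdot L_\alpha = 0$ together with the bigness of $L_\alpha$ to force $n_\alpha = 0$. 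You sidestep the negative part entirely by observing that the infimum is attained at a point of the \emph{open} big cone, so the full gradient of $D \mapsto (D\cdot\alpha)/\vol(D)^{1/n}$ vanishes there and the Euler equation $\vol(L)\,\alpha = (L\cdot\alpha)\,\langle L^{n-1}\rangle$ falls out directly; this also makes the value $\mathfrak{M}(\alpha)=\vol(L_\alpha)$ and the proportionality of all minimizers immediate. Note that you still use the movability of $\alpha$, just at a different point: it is what lets you restrict the infimum to movable divisor classes, so that the compactness argument produces a \emph{movable} limit $L_\infty$ to which Lemma \ref{lem computing m} applies (an unbounded minimizing sequence of merely big classes could degenerate to a rigid effective divisor, where the lemma says nothing). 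What the paper's route buys is uniformity --- the same abstract theorem also delivers the continuity and strict concavity statements (Theorems \ref{thm continuity big movable} and \ref{thm strict concavity M hatvol}) in one package; what your route buys is a self-contained and transparent derivation, with uniqueness delegated to Corollary \ref{cor mov-intersection after} exactly as in the paper.
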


\begin{proof}
Applying Theorem \ref{strong zariski equivalence} to $\mathfrak{M}'$, we get
$$\alpha =D(L_\alpha)+n_\alpha$$
where $L_\alpha$ is a big movable class computing $\mathfrak{M}(\alpha)$ and $n_\alpha \in \Mov^1 (X)^*$. As $D$ is the differential of $\vol^{1/n}$ on big and movable divisor classes, we have
$D(L_\alpha)=\langle L_{\alpha} ^{n-1}\rangle$. Note that $\mathfrak{M}(\alpha) = \langle L_{\alpha}^{n-1} \rangle \cdot L_{\alpha} = \vol(L_{\alpha})$.

To finish the proof, we observe that $n_\alpha \in \Mov_1 (X)$. This follows since $\alpha$ is movable: by the definition of $L_\alpha$, for any pseudo-effective divisor class $E$ and $t\geq 0$ we have
\begin{align*}
 \frac{\alpha\cdot L_\alpha}{\vol(L_\alpha)^{1/n}} \leq \frac{\alpha\cdot P_\sigma (L_\alpha+tE)}{\vol( L_\alpha+tE)^{1/n}}
 \leq \frac{\alpha\cdot (L_\alpha+tE)}{\vol( L_\alpha+tE)^{1/n}}
\end{align*}
with equality at $t=0$. This then implies
\begin{align*}
n_\alpha \cdot E \geq 0.
\end{align*}
Thus $n_\alpha \in \Mov_1 (X)$.
Intersecting against $L_\alpha$, we have $n_\alpha \cdot L_\alpha =0$. This shows $n_\alpha =0$ because $L_\alpha$ is an interior point of $\Eff^1 (X)$ and $\Eff^1 (X)^* =\Mov_1 (X)$.

So we have
$$\alpha =D(L_\alpha)=\langle L_{\alpha} ^{n-1}\rangle.$$
Finally, the uniqueness follows from Corollary \ref{cor mov-intersection after}.
\end{proof}

We note in passing that we immediately obtain:

\begin{cor}
Let $X$ be a projective variety of dimension $n$.  Then the rays spanned by classes of irreducible curves which deform to cover $X$ are dense in $\Mov_{1}(X)$.
\end{cor}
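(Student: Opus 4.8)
The plan is to combine the refined structure theorem just proved with a Fujita-approximation plus Bertini argument, exhibiting honest irreducible covering curves whose numerical classes limit onto any prescribed movable class. First I would reduce to the smooth case by passing to a resolution $\pi \colon \widetilde{X} \to X$. Since $\pi^{*}\Eff^{1}(X) \subseteq \Eff^{1}(\widetilde{X})$, the projection formula gives $\pi_{*}\Mov_{1}(\widetilde{X}) \subseteq \Mov_{1}(X)$; conversely a general member of a covering family on $X$ avoids $\Sing(X)$ and the exceptional locus, so its strict transform forms a covering family on $\widetilde{X}$, whence $\pi_{*}\Mov_{1}(\widetilde{X})$ is dense in $\Mov_{1}(X)$. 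As the image under the birational morphism $\pi$ of an irreducible curve covering $\widetilde{X}$ is an irreducible curve covering $X$, it suffices to produce a dense set of such rays on the smooth model, so I may assume $X$ smooth.

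Next I would reduce to interior classes. The interior of $\Mov_{1}(X)$ is dense in $\Mov_{1}(X)$, so it is enough to approximate each $\alpha \in \Mov_{1}(X)^{\circ}$. Every such $\alpha$ pairs strictly positively with every nonzero class of $\Eff^{1}(X) = \Mov_{1}(X)^{*}$, in particular with every nonzero movable divisor, so $\mathfrak{M}(\alpha) > 0$ by Lemma \ref{lem computing m}. Theorem \ref{thm mov-intersection after} then writes $\alpha = \langle L_{\alpha}^{n-1} \rangle$ as a \emph{single} positive product of a big and movable divisor class $L_{\alpha}$. This is the crucial point: a mere BDPP-style expression of $\alpha$ as a positive \emph{combination} of such products would only yield reducible representatives, whereas a single positive product is what permits an irreducible one.

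Now I would realize $\langle L_{\alpha}^{n-1} \rangle$ geometrically. By the construction of the positive product through Fujita approximations (exactly as invoked in the proof of Proposition \ref{prop teissier movable} via \cite[Theorem 6.22]{fl14}), for every $\epsilon > 0$ there is a modification $\mu_{\epsilon} \colon Y_{\epsilon} \to X$ with $Y_{\epsilon}$ smooth and an ample class $A_{\epsilon}$ on $Y_{\epsilon}$ with $\mu_{\epsilon *} A_{\epsilon}^{n-1}$ within $\epsilon$ of $\langle L_{\alpha}^{n-1} \rangle$. After perturbing $A_{\epsilon}$ to a nearby rational class and clearing denominators, I may assume $A_{\epsilon}$ is a very ample integral class; then by Bertini a general complete intersection $C = D_{1} \cap \cdots \cap D_{n-1}$ of members $D_{i} \in |A_{\epsilon}|$ is an irreducible curve with $[C] = A_{\epsilon}^{n-1}$ in $N_{1}(Y_{\epsilon})$, and these curves cover $Y_{\epsilon}$ (through a general point one takes the $D_{i}$ to pass through that point). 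For general $C$ the morphism $\mu_{\epsilon}$ is an isomorphism near $C$, so $\mu_{\epsilon}(C)$ is an irreducible curve covering $X$ with $[\mu_{\epsilon}(C)] = \mu_{\epsilon *}[C] = \mu_{\epsilon *} A_{\epsilon}^{n-1}$. These classes approximate $\alpha$, proving density of the rays of irreducible covering curve classes.

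The main obstacle is the geometric realization in the third step: the positive product is a priori only a numerical limit of intersection classes on a tower of modifications, and one must convert it into a genuine algebraic family of irreducible curves dominating $X$ whose classes converge to $\alpha$. Concretely, the care lies in (i) choosing the Bertini perturbation of $A_{\epsilon}$ and clearing denominators while keeping the resulting ray $\epsilon$-close to that of $\alpha$, and (ii) checking that a general complete intersection meets the locus where $\mu_{\epsilon}$ is an isomorphism, so that its pushforward is reduced, irreducible, uncontracted, and of degree one onto its image, hence of the expected numerical class. These are routine once set up, but they constitute the substantive content beyond the formal input of Theorem \ref{thm mov-intersection after}.
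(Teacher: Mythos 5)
Your proposal is correct and follows essentially the same route as the paper: reduce to a resolution, invoke Theorem \ref{thm mov-intersection after} to write interior classes as single positive products $\langle L^{n-1}\rangle$, and realize these as limits of pushforwards of complete intersections of (very) ample divisors on Fujita approximations, which are irreducible covering curves by Bertini. The extra care you take with the rational perturbation and with $\mu_{\epsilon}$ being an isomorphism near a general point of $C$ (it need not be an isomorphism on a whole neighborhood of $C$) only fleshes out steps the paper leaves implicit.
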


\begin{proof}
It suffices to prove this on a resolution of $X$.  By Theorem \ref{thm mov-intersection after} it suffices to show that any class of the form $\langle L^{n-1} \rangle$ for a big divisor $L$ is a limit of rescalings of classes of irreducible curves which deform to cover $X$.  Indeed, we may even assume that $L$ is a $\mathbb{Q}$-Cartier divisor.  Then the positive product is a limit of the pushforward of complete intersections of ample divisors on birational models, whence the result.
\end{proof}

We can also describe the boundary of $\Mov_{1}(X)$, in combination with Lemma \ref{lem computing m}.

\begin{cor} \label{cor boundary to boundary}
Let $X$ be a smooth projective variety of dimension $n$.  Let $\alpha$ be a movable class with $\mathfrak{M}(\alpha) > 0$ and let $L_\alpha$ be the unique big movable divisor whose positive product is $\alpha$.  Then $\alpha$ is on the boundary of $\Mov_{1}(X)$ if and only if $L_\alpha$ is on the boundary of $\Mov^{1}(X)$.
\end{cor}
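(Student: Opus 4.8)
The plan is to deduce the statement from the fact that $\langle -^{n-1}\rangle$ is a \emph{homeomorphism} between the big movable divisor classes and the locus where $\mathfrak{M}$ is positive, and then to use invariance of domain to match the ambient interiors of these two (full-dimensional) parameter spaces with the relevant cone interiors. Set $U = \Mov^1(X)\cap\Eff^1(X)^\circ$ (the big movable divisor classes) and $W = \{\alpha\in\Mov_1(X): \mathfrak{M}(\alpha) > 0\}$. First I would record that $\Phi := \langle -^{n-1}\rangle$ is continuous on the big cone and, by Theorem \ref{thm mov-intersection after} together with Corollary \ref{cor mov-intersection after}, restricts to a bijection $\Phi: U\to W$ with $\mathfrak{M}(\Phi(L)) = \vol(L)$. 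Its inverse $\alpha\mapsto L_\alpha$ is continuous (this is the content of Theorem \ref{homeothrm}; alternatively it follows from the continuity of positive parts in Theorem \ref{thrm positive part continuity} applied to $\mathfrak{M}'$, whose computing classes lie on the rays through the $L_\alpha$). Hence $\Phi$ is a homeomorphism $U\to W$. Because $X$ is smooth, $N^1(X)$ and $N_1(X)$ are dual and so of equal dimension $\rho(X)$, which is exactly what makes invariance of domain applicable.

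Next I would match the interiors. On the divisor side, a big class has a whole neighbourhood inside the open big cone, so $\operatorname{int}U = \Mov^1(X)^\circ\cap\Eff^1(X)^\circ$; thus for big $L$ one has $L\in\Mov^1(X)^\circ\iff L\in\operatorname{int}U$, and $L\in\partial\Mov^1(X)\iff L\in U\setminus\operatorname{int}U$. On the curve side I claim $W^\circ = \Mov_1(X)^\circ$: by definition $W\subseteq\Mov_1(X)$, while Lemma \ref{lem computing m} shows that any $\alpha\in\Mov_1(X)^\circ$ meets every nonzero movable divisor positively and hence has $\mathfrak{M}(\alpha)>0$, giving $\Mov_1(X)^\circ\subseteq W\subseteq\Mov_1(X)$; taking interiors yields $W^\circ=\Mov_1(X)^\circ$.

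The core step is then invariance of domain. Since $\operatorname{int}U$ is open in $N^1(X)$ and $\Phi$ is continuous and injective there, $\Phi(\operatorname{int}U)$ is open in $N_1(X)$ and contained in $W$, hence in $W^\circ$. Applying the same principle to the continuous injection $\Phi^{-1}$ on the open set $W^\circ\subseteq N_1(X)$ gives $\Phi^{-1}(W^\circ)\subseteq\operatorname{int}U$, i.e. $W^\circ\subseteq\Phi(\operatorname{int}U)$. Therefore $\Phi(\operatorname{int}U)=W^\circ$, and since $\Phi$ is a bijection $U\to W$ it also carries $U\setminus\operatorname{int}U$ onto $W\setminus W^\circ$. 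Unwinding through the identifications above, for $\alpha\in W$ with associated $L_\alpha$ this reads $\alpha\in\Mov_1(X)^\circ\iff L_\alpha\in\Mov^1(X)^\circ$, and passing to complements (inside $\Mov_1(X)$ on one side, inside the big movable classes on the other) gives precisely $\alpha\in\partial\Mov_1(X)\iff L_\alpha\in\partial\Mov^1(X)$.

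The one delicate ingredient, and the step I expect to require the most care, is the continuity of the inverse $\alpha\mapsto L_\alpha$: this is what upgrades the set-theoretic bijection $\Phi$ to a genuine homeomorphism and lets invariance of domain run in \emph{both} directions. A direct attempt to argue, say, that an interior movable class cannot have $\langle L_\alpha^{n-1}\rangle\cdot E=0$ for a nonzero effective divisor $E$ appears to need a Kleiman-type strict positivity criterion for the interior of the movable cone, which the volume inequalities of Lemma \ref{lemma psefconstantbound} and Proposition \ref{prop teissier movable} do not by themselves supply; routing the argument through the already-established continuity of the Zariski-type decomposition of $\mathfrak{M}'$ is therefore the efficient path, and invariance of domain does the rest.
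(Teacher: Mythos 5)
Your argument is correct, but it is genuinely different from the one in the paper. The paper's proof is short and explicit: a class $\alpha$ lies on $\partial\Mov_1(X)$ iff $\alpha\cdot D=0$ for some $D$ spanning an extremal ray of $\Eff^1(X)$; Lemma \ref{lem computing m} forces such a $D$ to be non-movable, hence (by Nakayama's structure theory) the class of an integral divisor; and then the characterization $\langle L_\alpha^{n-1}\rangle\cdot D=0\iff D\in\mathbb{B}_+(L_\alpha)$ from \cite{bfj09} identifies exactly when this happens in terms of $L_\alpha$ sitting on $\partial\Mov^1(X)$. Your route replaces all of this base-locus and extremal-ray input with soft topology: once $\Phi$ is a homeomorphism between the big movable divisor classes and $\{\mathfrak{M}>0\}$, invariance of domain forces interiors to correspond. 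What your approach buys is robustness (it avoids $\mathbb{B}_+$ and Nakayama entirely, so it transports to any setting where the homeomorphism statement holds); what it costs is that it leans on the continuity of $\alpha\mapsto L_\alpha$, which the paper only extracts from the polar-transform machinery in Theorem \ref{thm continuity big movable}, whereas the paper's proof of this corollary needs no continuity at all. Two points of care: (i) you must cite Theorem \ref{homeothrm} \emph{only} for the continuity of $\alpha\mapsto L_\alpha$ on the interior --- its assertion that $L_\alpha$ lies in $\Mov^1(X)^\circ$ for interior $\alpha$ is precisely (half of) the corollary being proved, so invoking it wholesale would be circular; the non-circular reference is Theorem \ref{thm continuity big movable} (or Theorem \ref{thrm positive part continuity}), which is independent of this corollary. (ii) The inclusion $\Phi(U)\subseteq W$, i.e.\ $\mathfrak{M}(\langle L^{n-1}\rangle)=\vol(L)>0$ for \emph{every} big movable $L$, is slightly more than Theorem \ref{thm mov-intersection after} states (which starts from $\mathfrak{M}(\alpha)>0$); it follows in one line from Proposition \ref{prop teissier movable}, but should be said. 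With those two caveats addressed, the argument is complete.
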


\begin{proof}
Note that $\alpha$ is on the boundary of $\Mov_{1}(X)$ if and only if it has vanishing intersection against a class $D$ lying on an extremal ray of $\Eff^{1}(X)$.  Lemma \ref{lem computing m} shows that in this case $D$ is not movable, so by \cite[Chapter III.1]{Nak04} $D$ is (after rescaling) the class of an integral divisor on $X$ which we continue to call $D$.  By \cite[Proposition 4.8 and Theorem 4.9]{bfj09}, the equation $\langle L_\alpha ^{n-1} \rangle \cdot D = 0$ holds if and only if $D \in \mathbb{B}_{+}(L_\alpha)$.  Altogether, we see that $\alpha$ is on the boundary of $\Mov_{1}(X)$ if and only if $L_\alpha$ is on the boundary of $\Mov^{1}(X)$.
\end{proof}

Arguing using abstract properties of polar transforms just as in \cite{lehmannxiao2015a}, the good analytic properties of the volume function for divisors imply most of the other analytic properties of $\mathfrak{M}$.

\begin{thrm} (see Theorem \ref{thrm positive part continuity} and compare with \cite[Theorem 5.6]{lehmannxiao2015a})
\label{thm continuity big movable}
Let $X$ be a smooth projective variety of dimension $n$.  For any movable curve class $\alpha$ with $\mathfrak{M}(\alpha)>0$, let $L_{\alpha}$ denote the unique big and movable divisor class satisfying $\langle L_{\alpha}^{n-1} \rangle = \alpha$.   As we vary $\alpha$ in $\Mov_1 (X)_{\mathfrak{M}}$, $L_\alpha$ depends continuously on $\alpha$.
\end{thrm}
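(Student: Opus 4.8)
The plan is to argue by sequential compactness, reducing the statement to the claim that the minimizers $L_{\alpha_k}$ cannot escape to infinity or collapse onto the boundary of the big cone. Concretely, I would take a sequence $\alpha_k \to \alpha$ in $\Mov_1(X)_{\mathfrak{M}}$ and show that every accumulation point of the associated sequence of divisor classes $L_{\alpha_k}$ equals $L_\alpha$; since a bounded sequence all of whose accumulation points coincide must converge, this yields continuity. This is the concrete incarnation of the abstract statement Theorem~\ref{thrm positive part continuity}: the positive part of $\mathfrak{M}'$ at $\alpha$ is $\alpha$ itself (its negative part vanishes by Theorem~\ref{thm mov-intersection after}), and what we are really extracting is the continuous dependence of the minimizing ray $G_\alpha = \mathbb{R}_{\geq 0}\, L_\alpha$ on $\alpha$.

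First I would record the identity $L_\alpha \cdot \alpha = \langle L_\alpha^{n-1}\rangle \cdot L_\alpha = \vol(L_\alpha) = \mathfrak{M}(\alpha)$, which follows from Theorem~\ref{thm mov-intersection after} together with the standard equality $\langle L^{n-1}\rangle \cdot L = \vol(L)$ for big $L$ (Euler's relation for the weight-$n$ homogeneous function $\vol$, using $D(L) = \langle L^{n-1}\rangle$). By the continuity of $\mathfrak{M}$ on $\Mov_1(X)$ established above, the numbers $\vol(L_{\alpha_k}) = \mathfrak{M}(\alpha_k)$ converge to $\mathfrak{M}(\alpha) > 0$, hence are bounded above and bounded away from zero for large $k$.

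The main obstacle is the boundedness of $\{L_{\alpha_k}\}$ in $N^1(X)$, and this is precisely where the hypothesis $\mathfrak{M}(\alpha) > 0$ is used. By the discussion following Lemma~\ref{lem computing m}, a class $\alpha \in \Mov_1(X)_{\mathfrak{M}}$ lies in the interior of $\Mov^1(X)^*$; consequently the functional $D \mapsto \alpha \cdot D$ is strictly positive on $\Mov^1(X)\setminus\{0\}$, and compactness of the slice $\{D \in \Mov^1(X) : \|D\| = 1\}$ produces a constant $c>0$ with $\alpha \cdot D \geq c\|D\|$ for all movable $D$. For $k$ large the nearby classes satisfy $\alpha_k \cdot D \geq \tfrac{c}{2}\|D\|$ uniformly in $D$; applying this to the movable divisor $D = L_{\alpha_k}$ and invoking the identity above gives $\tfrac{c}{2}\|L_{\alpha_k}\| \leq \alpha_k \cdot L_{\alpha_k} = \vol(L_{\alpha_k})$, which is bounded. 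Hence $\{L_{\alpha_k}\}$ is bounded.

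Finally I would identify the accumulation points. Passing to a subsequence, suppose $L_{\alpha_k} \to L'$ in the closed movable cone. Continuity of $\vol$ gives $\vol(L') = \lim_k \vol(L_{\alpha_k}) = \mathfrak{M}(\alpha) > 0$, so $L'$ is big and movable. Since positive products are continuous on the big cone, $\langle (L')^{n-1}\rangle = \lim_k \langle L_{\alpha_k}^{n-1}\rangle = \lim_k \alpha_k = \alpha$. By the uniqueness clause of Theorem~\ref{thm mov-intersection after} (and Corollary~\ref{cor mov-intersection after}), this forces $L' = L_\alpha$. Thus every accumulation point of the bounded sequence $\{L_{\alpha_k}\}$ equals $L_\alpha$, whence $L_{\alpha_k} \to L_\alpha$, which is the desired continuity. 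I expect the only genuinely delicate point to be the boundedness step; once properness is in hand, the identification of limits is a routine consequence of the continuity of $\vol$, the continuity of the positive product, and uniqueness.
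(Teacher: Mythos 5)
Your argument is correct, and it is a genuinely self-contained alternative to what the paper does: the paper gives no in-text proof at all, instead invoking the abstract polar-transform statement (Theorem \ref{thrm positive part continuity} from the prequel), whereas you unwind that machinery into a direct compactness argument. The comparison is instructive. A literal application of Theorem \ref{thrm positive part continuity} is slightly subtle here, because on $\Mov_1(X)_{\mathfrak{M}}$ the ``positive part'' map $\alpha \mapsto \langle L_\alpha^{n-1}\rangle$ is just the identity (the negative part vanishes by Theorem \ref{thm mov-intersection after}), so the real content is the continuity of the minimizing ray $G_\alpha$ rather than of $D(L_\alpha)$ -- which is exactly what your argument establishes. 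Your two pillars are the right ones and are both supplied by the paper: uniqueness of $L_\alpha$ via Corollary \ref{cor mov-intersection after} (ultimately Teissier proportionality, Proposition \ref{prop teissier movable}), and the fact that $\Mov_1(X)_{\mathfrak{M}}$ lies in the interior of $\Mov^1(X)^*$ (the remark after Lemma \ref{lem computing m}), which gives the coercivity bound $\alpha_k \cdot D \geq \tfrac{c}{2}\|D\|$ and hence boundedness of the minimizers. The remaining steps -- Euler's relation $\langle L^{n-1}\rangle \cdot L = \vol(L)$ from $D\vol(L) = n\langle L^{n-1}\rangle$, continuity of $\vol$ and of the positive product on the big cone, and the ``bounded sequence with a unique accumulation point converges'' closing move -- are all legitimate. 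What your route buys is transparency and independence from the formal Zariski-decomposition framework; what the paper's route buys is brevity and uniformity, since the same abstract theorem is reused for $\widehat{\vol}$ and $\mathfrak{M}'$ alike.
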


\begin{thrm} (
compare with \cite[Theorem 5.11]{lehmannxiao2015a})
\label{thm direvative movable}
Let $X$ be a smooth projective variety of dimension $n$.
For a curve class $\alpha=\langle L_\alpha ^{n-1}\rangle$ in $\Mov_1 (X)_{\mathfrak{M}}$ and for an arbitrary curve class $\beta\in N_1(X)$ we have
\begin{align*}
\left. \frac{d}{dt} \right|_{t=0} \mathfrak{M}(\alpha+t \beta)=\frac{n}{n-1}P_\sigma (L_\alpha) \cdot \beta.
\end{align*}
\end{thrm}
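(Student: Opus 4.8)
The plan is to leverage the fact, already secured in the discussion preceding Theorem~\ref{thm continuity big movable}, that $\mathfrak{M}$ extends to a differentiable function on an open neighborhood of the cone $\Mov_{1}(X)_{\mathfrak{M}}$ (equivalently, that $\mathfrak{M}'$ is differentiable there, via Theorem~\ref{strong zariski equivalence} and the strict log concavity of $\vol$). Once differentiability is granted, the value of the derivative is completely forced by an envelope argument together with homogeneity, so the genuine analytic content lies in the earlier differentiability statement and what remains here is a short computation. Throughout I interpret $\mathfrak{M}(\alpha+t\beta)$ for an arbitrary $\beta\in N_{1}(X)$ through this differentiable extension; since $\alpha$ lies in the interior of $\Mov^{1}(X)^{*}$ (again by the discussion before Theorem~\ref{thm continuity big movable}), $\alpha+t\beta$ stays in the domain of the extension for small $t$, so the one-sided worry of leaving $\Mov_{1}(X)$ never arises.

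First I would fix the optimal divisor: by Theorem~\ref{thm mov-intersection after}, $L_{\alpha}$ is the unique big and movable class with $\langle L_{\alpha}^{n-1}\rangle=\alpha$, and $\mathfrak{M}(\alpha)=\vol(L_{\alpha})$. I then introduce the auxiliary function
\[
\phi(\beta') = \left( \frac{L_{\alpha}\cdot\beta'}{\vol(L_{\alpha})^{1/n}} \right)^{n/n-1},
\]
which is $C^{1}$ near $\alpha$ because $L_{\alpha}\cdot\alpha=\vol(L_{\alpha})>0$. Using the description $\mathfrak{M}(\beta')=\inf_{D\textrm{ big movable}}(D\cdot\beta'/\vol(D)^{1/n})^{n/n-1}$ recorded earlier in the section, the extended $\mathfrak{M}$ satisfies $\mathfrak{M}\le\phi$ on all of $\Mov^{1}(X)^{*}$, and there is equality at $\alpha$: indeed $\phi(\alpha)=\bigl(\vol(L_{\alpha})^{n-1/n}\bigr)^{n/n-1}=\vol(L_{\alpha})=\mathfrak{M}(\alpha)$ by Theorem~\ref{thm mov-intersection after}.

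Next I extract the derivative. The difference $\mathfrak{M}-\phi$ is differentiable at $\alpha$, vanishes there, and is $\le 0$ nearby, so $\alpha$ is a local maximum and the two gradients agree, $D\mathfrak{M}(\alpha)=D\phi(\alpha)$. A direct computation gives
\[
\left.\frac{d}{dt}\right|_{t=0}\phi(\alpha+t\beta)
= \frac{n}{n-1}\left(\frac{L_{\alpha}\cdot\alpha}{\vol(L_{\alpha})^{1/n}}\right)^{1/n-1}\frac{L_{\alpha}\cdot\beta}{\vol(L_{\alpha})^{1/n}}
= \frac{n}{n-1}\,L_{\alpha}\cdot\beta,
\]
where I used $L_{\alpha}\cdot\alpha=\vol(L_{\alpha})$ to reduce the middle factor to $\vol(L_{\alpha})^{1/n}$. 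Since $L_{\alpha}$ is movable we have $L_{\alpha}\cdot\beta=P_{\sigma}(L_{\alpha})\cdot\beta$, which yields the stated formula.

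Alternatively one can argue purely from the abstract polar transform theory: the remark following the definition of $G_{w^{*}}$ identifies $D\mathfrak{M}(\alpha)$, when it exists, as a positive multiple of the unique minimizer $L_{\alpha}\in G_{\alpha}$, and the scaling constant $n/(n-1)$ is then pinned down by Euler's relation $D\mathfrak{M}(\alpha)\cdot\alpha=\tfrac{n}{n-1}\mathfrak{M}(\alpha)$ together with $L_{\alpha}\cdot\alpha=\vol(L_{\alpha})=\mathfrak{M}(\alpha)$. In either approach the only point requiring care is that the directional derivative be taken with respect to the differentiable extension of $\mathfrak{M}$ so that an arbitrary $\beta$ is admissible; the substantive input, namely differentiability of $\mathfrak{M}$ on $\Mov_{1}(X)_{\mathfrak{M}}$, is already in hand, so no new hard estimate is needed and this is not really the obstacle.
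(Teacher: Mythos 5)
Your proposal is correct and follows essentially the same route as the paper, which does not write out a separate argument for this theorem but instead appeals to the abstract polar-transform machinery (differentiability of the extension of $\mathfrak{M}$ established before Theorem \ref{thm mov-intersection after}, plus the identification of the derivative with the normalized minimizer as in Theorem \ref{strong zariski equivalence} and the remark on $G_{w^*}$). Your envelope computation, and equally your Euler-relation variant, is precisely the concrete instantiation of that machinery, and you correctly flag the one genuine subtlety -- that the derivative must be read through the differentiable extension on a neighborhood of $\alpha$ in $\Mov^{1}(X)^{*\circ}$, since $\alpha$ need not be interior to $\Mov_{1}(X)$.
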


\begin{thrm} (see Theorem \ref{strong zariski equivalence} and compare with \cite[Theorem 5.10]{lehmannxiao2015a})
\label{thm strict concavity M hatvol}
Let $X$ be a smooth projective variety of dimension $n$.  Let $\alpha_1, \alpha_2$ be two big and movable curve classes in $\Mov_1 (X)_{\mathfrak{M}}$.  Then $$\mathfrak{M}(\alpha_1+ \alpha_2)^{n-1/n}\geq \mathfrak{M}(\alpha_1)^{n-1/n}+ \mathfrak{M}(\alpha_2)^{n-1/n}$$ with equality if and only if $\alpha_1$ and $\alpha_2$ are proportional.
\end{thrm}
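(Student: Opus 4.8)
The plan is to recognize this statement as the strict-concavity (equality) clause of the strong Zariski decomposition that $\mathfrak{M}$ inherits from the volume function, and then to read off the characterization of equality from the structure theorem for the movable cone of curves. The inequality itself is nothing special: since $\mathfrak{M} = \mathcal{H}(\vol)$ is the polar transform of the weight-$n$ function $\vol$, it is a member of $\HConc_{n/(n-1)}(\Mov_1(X))$ by the general theory of \cite{lehmannxiao2015a}, and $s$-concavity with $s = n/(n-1)$ is exactly the asserted superadditivity of $\mathfrak{M}^{(n-1)/n}$. Crucially this holds on all of $\Mov_1(X)$, not merely on the positivity locus, so it is available before we know anything about where $\alpha_1 + \alpha_2$ sits. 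Thus the real content is the equality case.

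First I would invoke Theorem \ref{strong zariski equivalence}. We have already recorded that on a smooth $X$ the volume function is $+$-differentiable on $\mathcal{C} = \Eff^1(X)$, with $\mathcal{C}_{\vol}$ the big cone and $D(L) = \langle L^{n-1}\rangle$. Hence the first bullet of Theorem \ref{strong zariski equivalence} applies and shows that $\mathfrak{M} = \mathcal{H}(\vol)$ admits a strong Zariski decomposition with respect to $D(\mathcal{C}_{\vol}) \cup \{0\}$. Condition (2) of Definition \ref{definition formal zariski} for this decomposition asserts precisely that for $\alpha_1,\alpha_2$ and $\alpha_1+\alpha_2$ all lying in $\Mov_1(X)_{\mathfrak{M}}$,
\[
\mathfrak{M}(\alpha_1)^{\frac{n-1}{n}} + \mathfrak{M}(\alpha_2)^{\frac{n-1}{n}} \le \mathfrak{M}(\alpha_1 + \alpha_2)^{\frac{n-1}{n}},
\]
with equality only if the positive parts $p_{\alpha_1}$ and $p_{\alpha_2}$ are proportional. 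To apply this I must check that the hypotheses $\alpha_1,\alpha_2 \in \Mov_1(X)_{\mathfrak{M}}$ already force $\alpha_1 + \alpha_2 \in \Mov_1(X)_{\mathfrak{M}}$. This is where the inequality from the first paragraph is used to break the apparent circularity: it yields $\mathfrak{M}(\alpha_1 + \alpha_2) > 0$, and we have observed (via Lemma \ref{lem computing m}) that the locus of $\Mov_1(X)$ on which $\mathfrak{M}$ is positive coincides with $\Mov_1(X)_{\mathfrak{M}}$.

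It then remains to convert ``positive parts proportional'' into ``$\alpha_1,\alpha_2$ proportional.'' Here I would use Theorem \ref{thm mov-intersection after}: for $\alpha_i \in \Mov_1(X)_{\mathfrak{M}}$ we have $\alpha_i = \langle L_{\alpha_i}^{n-1}\rangle = D(L_{\alpha_i})$ with $L_{\alpha_i}$ big and movable, and the proof of that theorem shows that the negative part $n_{\alpha_i}$ vanishes. Thus the Zariski decomposition of $\alpha_i$ is trivial, $p_{\alpha_i} = \alpha_i$, and proportionality of the positive parts is literally proportionality of $\alpha_1$ and $\alpha_2$. This gives the forward implication, while the converse is immediate from the homogeneity of $\mathfrak{M}$; hence equality holds if and only if $\alpha_1$ and $\alpha_2$ are proportional.

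The step requiring the most care is exactly this passage from the abstract equality clause to the geometric one: everything hinges on the positive part of $\alpha_i$ being the class itself, i.e. on the vanishing of the negative part established in Theorem \ref{thm mov-intersection after} (equivalently, on the bijection of Theorem \ref{homeothrm} identifying $\Mov_1(X)_{\mathfrak{M}}$ with the $(n-1)$-self positive products of big movable divisors). Granting the earlier structure results the argument is formal, and it mirrors \cite[Theorem 5.10]{lehmannxiao2015a}. Alternatively one could argue directly by writing $\mathfrak{M}(\alpha_i) = \vol(L_{\alpha_i})$ and transporting the equality case of the Teissier/Diskant inequality (Proposition \ref{prop teissier movable}) across the homeomorphism $L \mapsto \langle L^{n-1}\rangle$; but the Zariski-decomposition route is preferable because it avoids having to reconcile $\langle L_{\alpha_1}^{n-1}\rangle + \langle L_{\alpha_2}^{n-1}\rangle$ with a single positive product.
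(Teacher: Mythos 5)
Your proposal is correct and follows exactly the route the paper intends: the paper gives no separate proof of Theorem \ref{thm strict concavity M hatvol} beyond pointing to Theorem \ref{strong zariski equivalence} and the prior structure results, and you have filled in precisely the right details (superadditivity of $\mathfrak{M}^{(n-1)/n}$ from the polar-transform formalism to place $\alpha_1+\alpha_2$ in $\Mov_1(X)_{\mathfrak{M}}$, then the vanishing of the negative part from Theorem \ref{thm mov-intersection after} to identify $p_{\alpha_i}$ with $\alpha_i$). No gaps.
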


Another application of the results in this section is the Morse-type bigness criterion for movable curve classes, which is slightly different from \cite[Theorem 5.18]{lehmannxiao2015a}.

\begin{thrm}
\label{thm movable curve morse}
Let $X$ be a smooth projective variety of dimension $n$. Let $\alpha, \beta$ be two curve classes lying in $\Mov_1 (X)_{\mathfrak{M}}$.
Write $\alpha=\langle L_\alpha^{n-1} \rangle$ and $\beta=\langle L_\beta ^{n-1} \rangle$ for the unique big and movable divisor classes $L_\alpha, L_\beta$ given by Theorem \ref{thm mov-intersection after}. Then we have
\begin{align*}
\mathfrak{M}(\alpha-\beta)^{n-1/n}
&\geq (\mathfrak{M}(\alpha)-n L_\alpha \cdot \beta)\cdot \mathfrak{M}(\alpha)^{-1/n}\\
&=(\vol(L_\alpha)-n L_\alpha \cdot \beta)\cdot \vol(L_\alpha)^{-1/n}.
\end{align*}
In particular, we have
\begin{align*}
\mathfrak{M}(\alpha-\beta)\geq \vol(L_\alpha)-\frac{n^2}{n-1} L_\alpha \cdot \beta
\end{align*}
and the curve class $\alpha-\beta$ is big whenever $\mathfrak{M}(\alpha)-n L_\alpha \cdot \beta >0$.
\end{thrm}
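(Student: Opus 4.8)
The plan is to prove the first (main) inequality and then deduce the remaining two assertions formally. Once the first inequality is in hand, the quantitative bound $\mathfrak{M}(\alpha-\beta) \geq \vol(L_\alpha) - \frac{n^2}{n-1}L_\alpha\cdot\beta$ follows by raising to the power $n/(n-1)$ and applying the elementary tangent-line estimate $(1-x)^{n/(n-1)} \geq 1 - \frac{n}{n-1}x$ for the convex function $t\mapsto t^{n/(n-1)}$, with $x = nL_\alpha\cdot\beta/\vol(L_\alpha)$. The bigness of $\alpha-\beta$ follows because the first inequality forces $\mathfrak{M}(\alpha-\beta)>0$ whenever $\mathfrak{M}(\alpha) - nL_\alpha\cdot\beta>0$; since the relevant infimum is over big and movable divisors (so it computes $\mathfrak{M}'$, which is strictly positive exactly on $\Mov^1(X)^{*\circ}$), positivity means $\alpha-\beta$ meets every nonzero movable divisor positively, hence every nonzero nef divisor positively (as $\Nef^1(X)\subseteq\Mov^1(X)$), so $\alpha-\beta$ lies in the interior of $\Eff_1(X)=\Nef^1(X)^*$. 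Finally, if $\mathfrak{M}(\alpha)-nL_\alpha\cdot\beta\leq 0$ the first inequality is trivial since its left side is nonnegative, so we may assume $\mathfrak{M}(\alpha)-nL_\alpha\cdot\beta>0$.

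For the main inequality I would work directly from the description of $\mathfrak{M}$ as a polar transform over big and movable divisor classes, namely $\mathfrak{M}(\alpha-\beta)^{(n-1)/n} = \inf_{D}\frac{D\cdot(\alpha-\beta)}{\vol(D)^{1/n}}$, and bound this infimum below uniformly in $D$. Writing $\frac{D\cdot(\alpha-\beta)}{\vol(D)^{1/n}} = \frac{\langle L_\alpha^{n-1}\rangle\cdot D}{\vol(D)^{1/n}} - \frac{D\cdot\beta}{\vol(D)^{1/n}}$ and normalizing $\vol(D)=1$, I would control the first term using the Diskant inequality for big and movable divisor classes (the byproduct recorded in the remark after Proposition \ref{prop teissier movable}) applied to the pair $L_\alpha$, $D$. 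Setting $s=\sup\{t:L_\alpha - tD\in\Eff^1(X)\}$ and $q = \langle L_\alpha^{n-1}\rangle\cdot D/\vol(L_\alpha)^{(n-1)/n}\geq 1$ (the bound $q\geq 1$ being Teissier proportionality, Proposition \ref{prop teissier movable}), the Diskant inequality rearranges into the key lower bound $s \geq \vol(L_\alpha)^{1/n}\left(q^{1/(n-1)} - (q^{n/(n-1)}-1)^{1/n}\right)$. The second term I would bound above using that $L_\alpha - sD$ is pseudo-effective while $\beta$ is movable, so that $(L_\alpha - sD)\cdot\beta\geq 0$ and hence $D\cdot\beta \leq s^{-1}L_\alpha\cdot\beta$.

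Combining these two estimates reduces everything to the single-variable inequality
\[
q\,\vol(L_\alpha)^{(n-1)/n} - \frac{L_\alpha\cdot\beta}{\vol(L_\alpha)^{1/n}\left(q^{1/(n-1)} - (q^{n/(n-1)}-1)^{1/n}\right)} \;\geq\; \left(\mathfrak{M}(\alpha) - nL_\alpha\cdot\beta\right)\mathfrak{M}(\alpha)^{-1/n},
\]
required for every $q\geq 1$. After clearing denominators this becomes a concrete inequality between the two scalars $\vol(L_\alpha)$ and $L_\alpha\cdot\beta$, and I expect verifying it to be the main technical obstacle. The delicate point is that the derivative of the relevant one-variable function blows up at $q=1$, so its infimum is not attained at the Teissier endpoint but only in the degenerate limit $q\to\infty$ (geometrically, $D$ approaching the boundary), where the bound specializes to exactly the running hypothesis $\mathfrak{M}(\alpha)\geq nL_\alpha\cdot\beta$; one must therefore control the interior behaviour and rule out any dip below the target value, which I would do by isolating the factor $\big((q-1)\vol(L_\alpha)+nL_\alpha\cdot\beta\big)\big(q^{1/(n-1)}-(q^{n/(n-1)}-1)^{1/n}\big)-L_\alpha\cdot\beta$ and showing it is nonnegative on $[1,\infty)$. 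I would also flag the bookkeeping subtlety that the infimum over big and movable divisors computes $\mathfrak{M}'$ rather than $\mathfrak{M}$; since $\mathfrak{M}'|_{\Mov_1(X)}=\mathfrak{M}$ and the positivity established above places $\alpha-\beta$ in the region where the two functions agree, the estimate descends to the stated bound for $\mathfrak{M}$.
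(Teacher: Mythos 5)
Your route is genuinely different from the paper's. The paper disposes of this theorem in two lines: it invokes the formal polar-transform machinery of the companion paper to reduce the statement to a Morse-type bigness criterion for a difference of movable \emph{divisor} classes (namely that $L-M$ is big whenever $\langle L^{n}\rangle - n\langle L^{n-1}\rangle\cdot M>0$), and then quotes that criterion from the literature. You instead attack the infimum defining $\mathfrak{M}(\alpha-\beta)$ directly with the Diskant inequality. The skeleton is sound --- the normalization $\vol(D)=1$, the bound $D\cdot\beta\leq s^{-1}L_{\alpha}\cdot\beta$ coming from $(L_{\alpha}-sD)\cdot\beta\geq 0$, and the Diskant lower bound on $s$ are all correct --- but the proof is not complete: all of the content has been pushed into the one-variable inequality $\bigl((q-1)V+nb\bigr)g(q)\geq b$, where $g(q)=q^{1/(n-1)}-(q^{n/(n-1)}-1)^{1/n}$, $V=\vol(L_{\alpha})$, $b=L_{\alpha}\cdot\beta$, and you explicitly leave it unverified. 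It is in fact true under your running reduction to the case $V\geq nb$, and the verification is short: substitute $q=t^{n-1}$ and set $u=(t^{n}-1)^{1/n}\leq t$, so that $g(q)=t-u=1/\sum_{j=0}^{n-1}t^{n-1-j}u^{j}\geq 1/(nt^{n-1})=1/(nq)$; then $\bigl((q-1)V+nb\bigr)g(q)\geq\bigl((q-1)V+nb\bigr)/(nq)\geq b$ is exactly equivalent to $(q-1)V\geq nb(q-1)$, i.e.\ to $V\geq nb$. Supplying this computation closes the main gap, and your deductions of the two corollaries (the tangent-line estimate and the bigness statement) from the first inequality are fine.

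The remaining issue is the passage from the infimum back to $\mathfrak{M}(\alpha-\beta)$ itself. Your positivity argument places $\alpha-\beta$ in $\Mov^{1}(X)^{*\circ}$ and hence in the interior of $\Eff_{1}(X)$, which suffices for bigness; but bigness does not imply membership in $\Mov_{1}(X)$, and by the paper's convention $\mathfrak{M}$ vanishes on classes that are not movable, whereas your Diskant estimate bounds the infimum over big and movable divisors, i.e.\ $\mathfrak{M}'(\alpha-\beta)$. The functions $\mathfrak{M}$ and $\mathfrak{M}'$ are only known to agree on $\Mov_{1}(X)$, so your sentence claiming that ``the positivity established above places $\alpha-\beta$ in the region where the two functions agree'' is not justified as written: you have not shown $\alpha-\beta\in\Mov_{1}(X)$, and this does not follow from $\alpha-\beta\in\Eff_{1}(X)^{\circ}$. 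You should either prove movability of $\alpha-\beta$ or state your conclusion for $\mathfrak{M}'$; the paper avoids the issue entirely by running the argument at the level of divisor classes through the abstract Zariski-decomposition formalism.
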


\begin{proof}
By \cite[Section 4.2]{lehmannxiao2015a} it suffices to prove a Morse-type bigness criterion for the difference of two movable divisor classes. So we need to prove $L-M$ is big whenever
$$\langle L^n\rangle -n \langle L^{n-1}\rangle\cdot M >0.$$
This is proved (in the K\"ahler setting) in \cite[Theorem 1.1]{xiao2014movable}.
\end{proof}

\begin{rmk}
We remark that we can not extend this Morse-type criterion from big and movable divisors to arbitrary pseudo-effective divisor classes.  A very simple construction provides the counter examples, e.g.~the blow up of $\mathbb{P}^2$ (see \cite[Example 3.8]{Tra95morse}).
\end{rmk}

Combining Theorem \ref{thm mov-intersection after} and Theorem \ref{thm continuity big movable}, we obtain:

\begin{cor} \label{coneisocor}
Let $X$ be a smooth projective variety of dimension $n$.  Then
$$\Phi: \Mov^1(X)_{\vol} \rightarrow \Mov_1 (X)_{\mathfrak{M}},\qquad L \mapsto \langle L^{n-1}\rangle $$
is a homeomorphism.
\end{cor}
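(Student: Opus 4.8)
The plan is to verify directly that $\Phi$ is a continuous bijection with continuous inverse, assembling ingredients already in hand. The core is Theorem \ref{thm mov-intersection after}: it says that every $\alpha \in \Mov_1(X)$ with $\mathfrak{M}(\alpha) > 0$ equals $\langle L_\alpha^{n-1} \rangle$ for a \emph{unique} big and movable $L_\alpha$, and that $\mathfrak{M}(\alpha) = \vol(L_\alpha)$. First I would check that $\Phi$ is well defined into the stated target. For $L \in \Mov^1(X)_{\vol}$, i.e.\ $L$ big and movable, Proposition \ref{prop teissier movable} gives $\langle L^{n-1} \rangle \cdot D \geq \vol(L)^{(n-1)/n} \vol(D)^{1/n}$ for every big movable $D$; since $\mathfrak{M}$ may be computed over big movable divisors, this yields $\mathfrak{M}(\langle L^{n-1} \rangle) \geq \vol(L) > 0$, so $\langle L^{n-1} \rangle$ indeed lies in $\Mov_1(X)_{\mathfrak{M}}$. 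Surjectivity of $\Phi$ is then the existence clause of Theorem \ref{thm mov-intersection after}, and injectivity is its uniqueness clause (equivalently Corollary \ref{cor mov-intersection after} combined with the fact that two movable divisors with a common positive part coincide). This also identifies $\Phi^{-1}$ as the assignment $\alpha \mapsto L_\alpha$.

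For continuity of $\Phi$ itself I would invoke the continuity of the positive product $L \mapsto \langle L^{n-1} \rangle$ on the big cone, a standard property (see \cite{bfj09, BDPP13}); restricting to the big and movable cone shows that $\Phi$ is continuous. For the inverse I would appeal to Theorem \ref{thm continuity big movable}, which states exactly that $L_\alpha$ depends continuously on $\alpha$ as $\alpha$ varies in $\Mov_1(X)_{\mathfrak{M}}$. Together with the bijectivity established above, these two continuity statements give that $\Phi$ is a homeomorphism.

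Since the substance has already been carried out in the two cited theorems, I expect no serious obstacle remaining; the delicate points are bookkeeping. The one thing I would be careful about is that the domain $\Mov^1(X)_{\vol}$ and the codomain $\Mov_1(X)_{\mathfrak{M}}$ correspond \emph{exactly} under $\Phi$: by the uniqueness in Theorem \ref{thm mov-intersection after}, a big movable class $L$ is recovered as $L_{\langle L^{n-1} \rangle}$, so $\Phi^{-1}$ lands precisely in $\Mov^1(X)_{\vol}$ rather than in some larger cone. Once this matching of the two cones is pinned down, the homeomorphism assertion follows formally.
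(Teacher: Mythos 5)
Your proposal is correct and follows exactly the route the paper takes: the paper derives this corollary by combining Theorem \ref{thm mov-intersection after} (which gives bijectivity and the identification $\mathfrak{M}(\alpha)=\vol(L_\alpha)$) with Theorem \ref{thm continuity big movable} (continuity of $\alpha\mapsto L_\alpha$), with continuity of $L\mapsto\langle L^{n-1}\rangle$ supplied by \cite{bfj09}. Your additional bookkeeping — checking via Proposition \ref{prop teissier movable} that the image lands in $\Mov_1(X)_{\mathfrak{M}}$ and that the two cones match exactly — is a correct and worthwhile elaboration of what the paper leaves implicit.
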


\begin{rmk}
Corollary \ref{coneisocor} gives a systematic way of translating between ``chamber decompositions'' on $\Mov_{1}(X)$ and $\Mov^{1}(X)$.  This relationship could be exploited to elucidate the geometry underlying chamber decompositions.

One potential application is in the study of stability conditions.  For example, \cite{Neu10} studies a decomposition of $\Mov_1(X)$ into chambers defining different Harder-Narasimhan filtrations of the tangent bundle of $X$ with respect to movable curves.  Let $\alpha$ be a movable curve class. Denote by $\HNF(\alpha, TX)$ the Harder-Narasimhan filtration of the tangent bundle with respect to the class $\alpha$. Then we have the following ``destabilizing chambers":
$$
\Delta_\alpha:=\{ \beta\in \Mov_1(X)| \HNF(\beta, TX)=\HNF(\alpha, TX) \}.
$$
By \cite[Theorem 3.3.4, Proposition 3.3.5]{Neu10}, the destabilizing chambers are pairwise disjoint and provide a decomposition of the movable cone $\Mov_1(X)$. Moreover, the decomposition is locally finite in $\Mov_1(X)^\circ$ and the destabilizing chambers are convex cones whose closures are locally polyhedral in $\Mov_1(X)^\circ$. In particular, if $\Mov_1(X)$ is polyhedral, then the chamber structure is finite.

For Fano threefolds, \cite{Neu10} shows that the destabilizing subsheaves are all relative tangent sheaves of some Mori fibration on $X$.  See also \cite{Keb13} for potential applications of this analysis.
It would be interesting to study whether the induced filtrations on $TX$ are related to the geometry of the movable divisors $L$ in the $\Phi$-inverse of the corresponding chamber of $\Mov_{1}(X)$.
\end{rmk}

\begin{rmk} \label{singularcasermk}
Modified versions of many of the results in this section hold for singular varieties as well (see Remark \ref{firstsingrmk}).  For example, by similar arguments we can see that any element in the interior of $\Mov_{1}(X)$ is the positive product of some big divisor class regardless of singularities.  Conversely, whenever $\mathfrak{M}$ is $+$-differentiable we obtain a Zariski decomposition structure for $\vol$ by Theorem \ref{strong zariski equivalence}.
\end{rmk}

\begin{rmk}
All the results above extend to smooth varieties over algebraically closed fields.  However, for compact K\"ahler manifolds some results rely on Demailly's conjecture on the transcendental holomorphic Morse-type inequality, or equivalently, on the extension of the results of \cite{bfj09} to the K\"ahler setting.  Since the results of \cite{bfj09} are used in an essential way in the proof of Theorems \ref{thm mov-intersection after} and \ref{lemma psefconstantbound} (via the proof of \cite[Proposition 5.3]{fl14}), the only statement in this section which extends unconditionally to the K\"ahler setting is Lemma \ref{lem computing m}.
However, these conjectures are known if $X$ is a compact hyperk\"ahler manifold (see \cite[Theorem 10.12]{BDPP13}), so all of our results extend to compact hyperk\"ahler manifolds.  
\end{rmk}

\section{Toric varieties}
\label{toric section}

We study the function $\mathfrak{M}$ on toric varieties, showing that it can be interpreted by the underlying special structures.
In this section, $X$ will denote a simplicial projective toric variety of dimension $n$.  In terms of notation, $X$ will be defined by a fan $\Sigma$ in a lattice $N$ with dual lattice $M$.  We let $\{ v_{i} \}$ denote the primitive generators of the rays of $\Sigma$ and $\{ D_{i} \}$ denote the corresponding classes of $T$-divisors.  Our goal is to interpret the properties of the function $\mathfrak{M}$ in terms of toric geometry.

\subsection{Positive product on toric varieties}
Suppose that $L$ is a big movable divisor class on the toric variety $X$.  Then $L$ naturally defines a (non-lattice) polytope $Q_{L}$: if we choose an expression $L = \sum a_{i}D_{i}$, then
\begin{equation*}
Q_{L} = \{ u \in M_{\mathbb{R}} |  \langle u,v_{i} \rangle + a_{i} \geq 0\}
\end{equation*}
and changing the choice of representative corresponds to a translation of $Q_{L}$.
Conversely, suppose that $Q$ is a full-dimensional polytope such that the unit normals to the facets of $Q$ form a subset of the rays of $\Sigma$.  Then $Q$ uniquely determines a big movable divisor class $L_{Q}$ on $X$.  The divisors in the interior of the movable cone correspond to those polytopes whose facet normals coincide with the rays of $\Sigma$.

Given polytopes $Q_{1},\ldots,Q_{n}$, let $V(Q_{1},\ldots,Q_{n})$ denote the mixed volume of the polytopes.  \cite{bfj09} explains that the positive product of big movable divisors $L_{1},\ldots,L_{n}$ can be interpreted via the mixed volume of the corresponding polytopes:
\begin{equation*}
\langle L_{1} \cdot \ldots \cdot L_{n} \rangle = n! V(Q_{1},\ldots,Q_{n}).
\end{equation*}

\subsection{The function $\mathfrak{M}$}

In this section we use a theorem of Minkowski to describe the function $\mathfrak{M}$.  We thank J.~Huh for a conversation working out this picture.

Recall that a class $\alpha \in \Mov_{1}(X)$ defines a non-negative Minkowski weight on the rays of the fan $\Sigma$ -- that is, an assignment of a positive real number $t_{i}$ to each vector $v_{i}$ such that $\sum t_{i}v_{i} = 0$.  From now on we will identify $\alpha$ with its Minkowski weight.  We will need to identify which movable curve classes are positive along a set of rays which span $\mathbb{R}^{n}$.

\begin{lem}
Suppose $\alpha \in \Mov_{1}(X)$ satisfies $\mathfrak{M}(\alpha) > 0$.  Then $\alpha$ is positive along a spanning set of rays of $\Sigma$.
\end{lem}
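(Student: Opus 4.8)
The plan is to prove the contrapositive: if the rays on which $\alpha$ is strictly positive fail to span $\mathbb{R}^{n}$, then $\mathfrak{M}(\alpha) = 0$. I would set $S = \{ i : t_{i} > 0 \}$ and let $W$ be the linear span of $\{ v_{i} : i \in S \}$, assuming $W \subsetneq \mathbb{R}^{n}$. Equivalently, there is a nonzero $u_{0} \in M_{\mathbb{R}}$ annihilating $W$, so that $\langle u_{0}, v_{i} \rangle = 0$ for every $i \in S$; the annihilator $W^{\perp} \subset M_{\mathbb{R}}$ is then a nonzero subspace.

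First I would record the toric dictionary that the argument needs. Any class $L = \sum_{i} a_{i} D_{i}$ determines the polytope $P_{L} = \{ u : \langle u, v_{i} \rangle + a_{i} \geq 0 \ \forall i \}$, whose facet normals automatically lie among the $v_{i}$, and when $L$ is big one has $\vol(L) = n!\, \vol(P_{L})$ by the standard identification of global sections of $\mathcal{O}(mL)$ with lattice points of $mP_{L}$ (a specialization of the mixed-volume formula recorded above). The pairing with the Minkowski weight is $L \cdot \alpha = \sum_{i} a_{i} t_{i}$: this expression is independent of the chosen representative precisely because $\sum_{i} t_{i} v_{i} = 0$, and it computes the intersection number in the Fulton--Sturmfels description of the cap product.

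Then I would exhibit a test sequence witnessing the vanishing of $\mathfrak{M}(\alpha)$. Fix an ample class $L_{0} = \sum_{i} a_{i} D_{i}$ and, for $k \geq 0$, set $L_{k} = L_{0} + k \sum_{i \notin S} D_{i}$, which is big as the sum of an ample and an effective class. On the one hand $L_{k} \cdot \alpha = \sum_{i \in S} a_{i} t_{i} = L_{0} \cdot \alpha$ is constant in $k$, since all the added coefficients sit on rays with $t_{i} = 0$. On the other hand, as $k \to \infty$ the polytopes $P_{L_{k}}$ increase monotonically to the region $P_{\infty} = \{ u : \langle u, v_{i} \rangle + a_{i} \geq 0 \ \forall i \in S \}$ cut out by the $S$-inequalities alone. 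Because the normals $\{ v_{i} : i \in S \}$ span only $W$, the region $P_{\infty}$ is invariant under translation by $W^{\perp} \neq \{ 0 \}$ and hence has infinite volume; therefore $\vol(P_{L_{k}}) \to \infty$ and $\vol(L_{k}) \to \infty$. Consequently the ratio $(L_{k} \cdot \alpha)/\vol(L_{k})^{1/n} \to 0$, forcing $\mathfrak{M}(\alpha) = 0$ and completing the contrapositive.

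I expect the only genuine content to be the volume blow-up step, so the main obstacle is verifying $\vol(L) = n!\, \vol(P_{L})$ for the (possibly non-nef) big classes $L_{k}$ and confirming that the limiting region $P_{\infty}$ is truly unbounded. Both follow from $W$ being a proper subspace, but one must check that each individual $P_{L_{k}}$ is itself bounded --- which holds because the rays of a complete fan positively span $\mathbb{R}^{n}$ --- so that the volumes are finite and the monotone-limit argument is valid. Alternatively one could produce a nonzero movable divisor with vanishing intersection against $\alpha$ and invoke Lemma \ref{lem computing m}, but the direct polytope computation avoids any smoothness hypothesis and is cleaner for simplicial toric varieties.
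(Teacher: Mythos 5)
Your proof is correct, and it takes a genuinely different (more self-contained) route than the paper. The paper phrases the hypothesis via a hyperplane $V$ containing all rays where $\alpha$ is positive, takes the effective toric divisor $D = \sum_{v_i \notin V} D_i$, observes that its polytope is nonzero so that $h^0(mD)$ grows and hence $P_{\sigma}(D) \neq 0$, notes $D \cdot \alpha = 0$, and then invokes Lemma \ref{lem computing m} (vanishing intersection against a nonzero movable divisor class forces $\mathfrak{M}(\alpha)=0$). You instead exhibit an explicit minimizing sequence: $L_k = L_0 + k\sum_{i \notin S} D_i$ has constant intersection with $\alpha$ while $\vol(L_k) = n!\,\vol(P_{L_k}) \to \infty$ because the limiting region $P_\infty$ is translation-invariant under the nonzero annihilator $W^{\perp}$, so the defining infimum of $\mathfrak{M}(\alpha)$ is $0$ directly. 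The two arguments are close in spirit --- your volume blow-up for $L_0 + kD$ is essentially the statement $P_{\sigma}(D) \neq 0$ unwound in polytope language, and your divisor $\sum_{i\notin S} D_i$ plays the role of the paper's $D$ --- but yours buys independence from Lemma \ref{lem computing m} and the $\sigma$-decomposition machinery, which is a real advantage here since that lemma is stated for smooth varieties while the toric section only assumes $X$ simplicial; the paper's version buys brevity and ties the toric statement to the general characterization of the vanishing locus of $\mathfrak{M}$. The points you flag (boundedness of each $P_{L_k}$ from completeness of $\Sigma$, validity of $\vol(L)=n!\,\vol(P_L)$ for big non-nef toric classes, and well-definedness of $L\cdot\alpha=\sum a_i t_i$ via the Minkowski weight relation) are exactly the right ones and all check out.
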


We will soon see that the converse is also true in Theorem \ref{toricmfunction}.

\begin{proof}
Suppose that there is a hyperplane $V$ which contains every ray of $\Sigma$ along which $\alpha$ is positive.  Since $X$ is projective, $\Sigma$ has rays on both sides of $V$.  Let $D$ be the effective divisor consisting of the sum over all the primitive generators of rays of $\Sigma$ not contained in $V$.  It is clear that the polytope defined by $D$ has non-zero projection onto the subspace spanned by $V^{\perp}$, and in particular, that the polytope defined by $D$ is non-zero.  Thus the asymptotic growth of sections of $mD$ is at least linear in $m$, so $P_{\sigma}(D) \neq 0$ and  $\alpha$ has vanishing intersection against a non-zero movable divisor.  Lemma \ref{lem computing m} shows that $\mathfrak{M}(\alpha) = 0$.
\end{proof}

Minkowski's theorem asserts the following.  Suppose that $u_{1},\ldots,u_{s}$ are unit vectors which span $\mathbb{R}^{n}$ and that $r_{1},\ldots,r_{s}$ are positive real numbers.  Then there exists a polytope $P$ with unit normals $u_{1},\ldots,u_{s}$ and with corresponding facet volumes $r_{1},\ldots,r_{s}$ if and only if the $u_{i}$ satisfy 
\begin{equation*}
r_{1}u_{1} + \ldots + r_{s}u_{s} = 0.
\end{equation*}
Moreover, the resulting polytope is unique up to translation.  (See \cite{klain04} for a proof which is compatible with the results below.)  If a vector $u$ is a unit normal to a facet of $P$, we will use the notation $\vol(P^{u})$ to denote the volume of the facet corresponding to $u$.

If $\alpha$ is positive on a spanning set of rays, then it canonically defines a polytope (up to translation) via Minkowski's theorem by choosing the vectors $u_{i}$ to be the unit vectors in the directions $v_{i}$ and assigning to each the constant
\begin{equation*}
r_{i} = \frac{t_{i}|v_{i}|}{(n-1)!}.
\end{equation*}
Note that this is the natural choice of volume for the corresponding facet, as it accounts for:
\begin{itemize}
\item the discrepancy in length between $u_{i}$ and $v_{i}$, and
\item the factor $\frac{1}{(n-1)!}$ relating the volume of an $(n-1)$-simplex to the determinant of its edge vectors.
\end{itemize}
We denote the corresponding polytope in $M_{\mathbb{R}}$ defined by the theorem of Minkowski by $P_{\alpha}$.

\begin{thrm} \label{toricmfunction}
Suppose $\alpha$ is a movable curve class which is positive on a spanning set of rays and let $P_{\alpha}$ be the corresponding polytope.  Then
\begin{equation*}
\mathfrak{M}(\alpha) = n! \vol(P_{\alpha}).
\end{equation*}
Furthermore, the big movable divisor $L_{\alpha}$ corresponding to the polytope $P_{\alpha}$ satisfies $\langle L_{\alpha}^{n-1} \rangle = \alpha$.
\end{thrm}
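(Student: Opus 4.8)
The plan is to reduce both assertions to a single weight computation, since they are linked through Theorem \ref{thm mov-intersection after}. Indeed, once I know that $\langle L_\alpha^{n-1}\rangle = \alpha$ for the big movable class $L_\alpha$ attached to $P_\alpha$, the equality $\mathfrak{M}(\alpha) = \vol(L_\alpha)$ follows, and then $\vol(L_\alpha) = \langle L_\alpha^n\rangle = n!\,\vol(P_\alpha)$ by the mixed volume description of the positive product, giving $\mathfrak{M}(\alpha) = n!\,\vol(P_\alpha)$. To avoid invoking $\mathfrak{M}(\alpha)>0$ in advance, I would instead argue directly: testing $L=L_\alpha$ in the definition of $\mathfrak{M}$ gives $\mathfrak{M}(\alpha)\le \vol(L_\alpha)$, while Proposition \ref{prop teissier movable} applied to $L_\alpha$ against arbitrary big movable test classes gives the reverse inequality. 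So everything reduces to proving $\langle L_\alpha^{n-1}\rangle = \alpha$.

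Both $\langle L_\alpha^{n-1}\rangle$ and $\alpha$ are movable curve classes, hence nonnegative Minkowski weights on the rays of $\Sigma$, and I would show they assign the same weight to each ray. The essential input is the identity, valid for every big movable divisor class $L$ with polytope $Q_L$,
\begin{equation*}
(\text{weight of }\langle L^{n-1}\rangle\text{ on }v_i)\;=\;\frac{(n-1)!}{|v_i|}\,\vol^{\mathrm{eucl}}_{n-1}(F_i),
\end{equation*}
where $F_i$ is the facet of $Q_L$ with outer normal direction $v_i$ (and the right-hand side is $0$ if $v_i$ is not a facet normal). Granting this, I apply it to $L=L_\alpha$, where by construction $Q_{L_\alpha}=P_\alpha$ has a facet of Euclidean volume $r_i = t_i|v_i|/(n-1)!$ in the direction $u_i=v_i/|v_i|$; the formula then returns weight $\frac{(n-1)!}{|v_i|}\cdot r_i = t_i$, which is exactly the weight of $\alpha$. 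Hence $\langle L_\alpha^{n-1}\rangle=\alpha$. The factor $|v_i|$ built into the definition of $r_i$ is precisely what makes this cancellation work.

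To prove the displayed identity I would first record the lattice fact that for primitive $v_i$ the sublattice $M\cap v_i^\perp$ has covolume $|v_i|$ in $v_i^\perp$, which converts Euclidean facet volume into lattice-normalized facet volume, $\vol_{M(i)}(F_i)=|v_i|^{-1}\vol^{\mathrm{eucl}}_{n-1}(F_i)$, so the identity is equivalent to $\langle L^{n-1}\rangle\cdot D_i=(n-1)!\,\vol_{M(i)}(F_i)$. For $L$ nef this is the classical computation $L^{n-1}\cdot D_i=(L|_{D_i})^{n-1}$, which is $(n-1)!$ times the lattice volume of the polytope $F_i$ of the restriction $L|_{D_i}$ on the toric divisor $D_i$. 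In a form that covers the movable case uniformly, I would instead write the positive product through mixed volumes and apply the Minkowski surface-area formula: for any big movable test class $M$,
\begin{equation*}
\langle L^{n-1}\rangle\cdot M = n!\,V(Q_L^{[n-1]},Q_M)=(n-1)!\int_{S^{n-1}} h_{Q_M}\,dS_{n-1}(Q_L,\cdot),\qquad S_{n-1}(Q_L,\cdot)=\sum_i \vol^{\mathrm{eucl}}_{n-1}(F_i)\,\delta_{u_i}.
\end{equation*}
Comparing this with the toric curve--divisor pairing $\gamma\cdot M=\sum_i s_i\,h_{Q_M}(v_i)$ (for a class $\gamma$ of weights $s_i$), and using $h_{Q_M}(v_i)=|v_i|h_{Q_M}(u_i)$, shows that $\langle L^{n-1}\rangle$ pairs against every movable $M$ exactly as the class of weights $\frac{(n-1)!}{|v_i|}\vol^{\mathrm{eucl}}_{n-1}(F_i)$ does; since movable classes span $N^1(X)$, the two classes coincide, which is the identity.

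The hard part will be the passage from the nef to the genuinely big-and-movable case. The mixed-volume comparison above is attractive because it sidesteps Fujita approximation, but it rests on the curve--divisor pairing formula for movable $M$ together with the translation-invariance that makes both sides well defined (guaranteed by the balancing condition $\sum_i \vol^{\mathrm{eucl}}_{n-1}(F_i)\,u_i=0$, equivalently $\sum_i t_iv_i=0$). If one prefers to work through $D_i$ directly, then $\langle L^{n-1}\rangle\cdot D_i=(n-1)!\vol_{M(i)}(F_i)$ for non-nef movable $L$ must be obtained by approximating $L$ by pushforwards $\mu_{\epsilon*}A_\epsilon$ of nef classes on toric modifications, applying the nef case on each $Y_\epsilon$, and checking that the facet volumes of $Q_{A_\epsilon}$ converge to those of $Q_L$ while the exceptional contributions in $\mu_\epsilon^*D_i$ vanish in the limit; this convergence of facet data is the genuinely technical point. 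Throughout, the only other care needed is bookkeeping of the lattice normalizations, which is routine once the covolume fact $\operatorname{covol}(M\cap v_i^\perp)=|v_i|$ is in hand.
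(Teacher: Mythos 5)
Your overall architecture is reasonable, and the reduction at the start is fine: once $\langle L_{\alpha}^{n-1}\rangle = \alpha$ is known, testing $L_{\alpha}$ gives $\mathfrak{M}(\alpha) \leq \vol(L_{\alpha})$, Proposition \ref{prop teissier movable} gives the reverse inequality against all big movable test classes (and the infimum over big classes may be replaced by the infimum over big movable classes by passing to positive parts), and $\vol(L_{\alpha}) = n!\vol(P_{\alpha})$ is the mixed-volume description of the volume. The problem is that you have pushed the entire content of the theorem into the weight identity for $\langle L^{n-1}\rangle$ with $L$ big, movable and \emph{not} nef, and neither of your two routes to it is complete. Route (a) you explicitly defer ("the genuinely technical point" of controlling facet volumes under Fujita approximation), so it is an acknowledged gap. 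Route (b) has a hidden substitution: the formula $\langle L_{1}\cdots L_{n}\rangle = n!\,V(Q_{1},\ldots,Q_{n})$ quoted from \cite{bfj09} computes the full $n$-fold positive product $\langle L^{n-1}\cdot M\rangle$, whereas your surface-area-measure argument needs the ordinary pairing $\langle L^{n-1}\rangle \cdot M$ of the class $\langle L^{n-1}\rangle \in N_{1}(X)$ against $M$. These two quantities do not coincide a priori (compare Remark \ref{rmk teissier prop movable}, where the inequality between them is a genuine step), so the identity $\langle L^{n-1}\rangle\cdot M = n!\,V(Q_L^{[n-1]},Q_M)$ is unjustified as written.

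The paper's proof avoids computing the weights of $\langle L^{n-1}\rangle$ altogether, and you may want to compare. It fixes the curve class $\alpha$ and computes the \emph{ordinary} intersection number $L\cdot\alpha = \sum_i a_i t_i$ straight from the definition of a Minkowski weight; the facet formula for mixed volumes (with $n-1$ copies of the curve's polytope $P_{\alpha}$, not the divisor's polytope $Q_L$) identifies this sum as $n!\,V(P_{\alpha}^{n-1},Q_{L})$. Brunn--Minkowski then gives $\mathfrak{M}(\alpha) \geq n!\vol(P_{\alpha})$ directly from the definition of $\mathfrak{M}$ as an infimum over big movable $L$, the equality case of Brunn--Minkowski shows the infimum is attained exactly when $Q_{L}$ is homothetic to $P_{\alpha}$, and since $L_{\alpha}$ therefore computes $\mathfrak{M}(\alpha)$, Theorem \ref{thm mov-intersection after} forces $\langle L_{\alpha}^{n-1}\rangle$ to be proportional to $\alpha$, with the normalization pinned down by comparing values of $\mathfrak{M}$. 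In other words, the relation $\langle L_{\alpha}^{n-1}\rangle = \alpha$ is a \emph{consequence} of the extremal characterization rather than an input, which is precisely what lets one dodge the non-nef weight computation you got stuck on. If you want to salvage your route, you would need to either prove $\langle L^{n-1}\rangle\cdot M = \langle L^{n-1}\cdot M\rangle$ for big movable toric classes or carry out the Fujita-approximation convergence of facet data; both are more work than the Brunn--Minkowski argument.
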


\begin{proof}
Let $L \in \Mov^{1}(X)$ be a big movable divisor class and denote the corresponding polytope by $Q_{L}$.  We claim that the intersection number can be interpreted as a mixed volume:
\begin{equation*}
L \cdot \alpha = n! V(P_{\alpha}^{n-1},Q_{L}).
\end{equation*}
To see this, define for a compact convex set $K$ the function $h_{K}(u) = \sup_{v \in K} \{v \cdot u\}$.  Using \cite[Equation (5)]{klain04}
\begin{align*}
V(P_{\alpha}^{n-1},Q_{L}) & = \frac{1}{n} \sum_{u \textrm{ a facet of }P_{\alpha}+Q_{L}} h_{Q_{L}}(u) \vol(P_{\alpha}^{u}) \\
& = \frac{1}{n} \sum_{\textrm{rays }v_{i}} \left( \frac{a_{i}}{|v_{i}|} \right) \left( \frac{t_{i}|v_{i}|}{(n-1)!} \right) \\
& = \frac{1}{n!} \sum_{\textrm{rays }v_{i}} a_{i}t_{i} = \frac{1}{n!}L \cdot \alpha.
\end{align*}
Note that we actually have equality in the second line because $L$ is big and movable.  Recall that by the Brunn-Minkowski inequality
\begin{equation*}
V(P_{\alpha}^{n-1},Q_{L}) \geq \vol(P_{\alpha})^{n-1/n} \vol(Q_{L})^{1/n}
\end{equation*}
with equality only when $P_{\alpha}$ and $Q_{L}$ are homothetic.  Thus
\begin{align*}
\mathfrak{M}(\alpha) & = \inf_{L \textrm{ big movable class}} \left( \frac{L \cdot \alpha}{\vol(L)^{1/n}} \right)^{n/n-1} \\
& =  \inf_{L \textrm{ big movable class}}\left(  \frac{n! V(P_{\alpha}^{n-1},Q_{L})}{n!^{1/n} \vol(Q_{L})^{1/n}} \right)^{n/n-1} \\
& \geq n! \vol(P_{\alpha}).
\end{align*}
Furthermore, the equality is achieved for divisors $L$ whose polytope is homothetic to $P_{\alpha}$, showing the computation of $\mathfrak{M}(\alpha)$.  Furthermore, since the divisor $L_{\alpha}$ defined by the polytope computes $\mathfrak{M}(\alpha)$ we see that $\langle L_{\alpha}^{n-1} \rangle$ is proportional to $\alpha$.  By computing $\mathfrak{M}$ we deduce the equality:
\begin{equation*}
\mathfrak{M}(\langle L_{\alpha}^{n-1} \rangle) = \vol(L) = n! \vol(P_{\alpha}) = \mathfrak{M}(\alpha).
\end{equation*}
\end{proof}

The previous result shows:

\begin{cor} \label{toriccicor}
Let $\alpha$ be a curve class in $\Mov_{1}(X)_{\mathfrak{M}}$.  Then $\alpha \in \CI_{1}(X)$ if and only if the normal fan to the corresponding polytope $P_{\alpha}$ is refined by $\Sigma$.  In this case we have
\begin{equation*}
\widehat{\vol}(\alpha) = n! \vol(P_{\alpha}).
\end{equation*}
\end{cor}

\begin{proof}
By the uniqueness in Theorem \ref{thm mov-intersection after}, $\alpha \in \CI_{1}(X)$ if and only if the corresponding divisor $L_{\alpha}$ as in Theorem \ref{toricmfunction} is big and nef.
\end{proof}

\section{Comparing the complete intersection cone and the movable cone}
\label{ci and mov section}

Consider the functions $\widehat{\vol}$ and $\mathfrak{M}$ on the movable cone of curves $\Mov_1(X)$.  By their definitions we always have $\widehat{\vol}\geq \mathfrak{M}$ on the movable cone, and \cite[Remark 3.1]{xiao15} asks whether one can characterize when equality holds. In this section we show:

\begin{thrm}
\label{thm solution to xiao}
Let $X$ be a smooth projective variety of dimension $n$ and let $\alpha$ be a big and movable class. Then $\widehat{\vol}(\alpha)> \mathfrak{M}(\alpha)$ if and only if $\alpha \notin \CI_1(X)$.
\end{thrm}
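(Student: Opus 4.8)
The plan is to prove the equivalence by analyzing the two inequalities $\widehat{\vol} \geq \mathfrak{M}$ separately in each direction, relying heavily on the Zariski decomposition machinery already established. Recall that $\widehat{\vol}$ is the polar transform of $\vol$ taken over big and \emph{nef} divisor classes, whereas $\mathfrak{M}$ is the polar transform taken over all big (equivalently, big and movable) divisor classes. Since the nef cone is contained in the movable cone, the infimum defining $\mathfrak{M}$ ranges over a larger set, giving the easy inequality $\widehat{\vol}(\alpha) \geq \mathfrak{M}(\alpha)$ automatically. The whole content of the theorem is therefore to detect \emph{strict} inequality precisely by failure of membership in $\CI_1(X)$.

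For the direction that $\alpha \in \CI_1(X)$ implies equality: by Theorem \ref{thm mov-intersection after} we may write $\alpha = \langle L_\alpha^{n-1} \rangle$ for a unique big and movable $L_\alpha$ computing $\mathfrak{M}(\alpha) = \vol(L_\alpha)$. The condition $\alpha \in \CI_1(X)$ should force $L_\alpha$ to actually be big and nef (this is exactly the characterization used in the proof of Corollary \ref{toriccicor}, via the uniqueness in Theorem \ref{thm mov-intersection after}). Once $L_\alpha$ is nef, it is an admissible competitor in the infimum defining $\widehat{\vol}(\alpha)$, so $\widehat{\vol}(\alpha) \leq \vol(L_\alpha) = \mathfrak{M}(\alpha)$, and combined with the easy inequality we get equality. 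Indeed, Theorem \ref{thm curve decomposition} says that for a big curve class the Zariski decomposition $\alpha = B_\alpha^{n-1} + \gamma$ gives $\widehat{\vol}(\alpha) = \vol(B_\alpha)$ with $B_\alpha$ big and nef; when $\alpha \in \CI_1(X)$ the negative part $\gamma$ should vanish and $B_\alpha = L_\alpha$.

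For the harder direction, I would argue the contrapositive: if $\widehat{\vol}(\alpha) = \mathfrak{M}(\alpha)$ then $\alpha \in \CI_1(X)$. Let $B_\alpha$ be the big and nef divisor computing $\widehat{\vol}(\alpha)$ from Theorem \ref{thm curve decomposition}, so $\widehat{\vol}(\alpha) = \vol(B_\alpha)$ and $B_\alpha^{n-1} \preceq \alpha$. The key point is to use the equality case of the Teissier-type inequality. Since $B_\alpha$ computes $\widehat{\vol}(\alpha)$ and equality $\widehat{\vol}(\alpha) = \mathfrak{M}(\alpha)$ holds, $B_\alpha$ must also compute $\mathfrak{M}(\alpha)$ as a \emph{movable} competitor; by the uniqueness clause in Theorem \ref{thm mov-intersection after} any big and movable divisor computing $\mathfrak{M}(\alpha)$ is proportional to $L_\alpha$, forcing $L_\alpha$ (up to scaling) to equal the nef class $B_\alpha$. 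Then $\alpha = \langle L_\alpha^{n-1} \rangle = \langle B_\alpha^{n-1} \rangle = B_\alpha^{n-1}$ with $B_\alpha$ nef, so $\alpha \in \CI_1(X)$ by definition.

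The main obstacle I anticipate is carefully justifying that a divisor computing $\widehat{\vol}(\alpha)$ also computes $\mathfrak{M}(\alpha)$ under the equality hypothesis, and controlling the relationship between the two infima when the optimizer for $\mathfrak{M}$ is movable but a priori not nef. The delicate issue is whether the negative part $\gamma$ in the Zariski decomposition $\alpha = B_\alpha^{n-1} + \gamma$ genuinely vanishes, versus merely having $B_\alpha \cdot \gamma = 0$; one must rule out a strictly positive negative part $\gamma$ that nonetheless does not decrease the $\widehat{\vol}$-value. I expect this to follow from the strict log concavity of $\vol$ on the big and movable cone (Proposition \ref{strictlyconvexvol}) together with the strict concavity statement for $\mathfrak{M}$ in Theorem \ref{thm strict concavity M hatvol}: strictness is exactly what upgrades the equality $\widehat{\vol} = \mathfrak{M}$ into proportionality and hence into the vanishing of $\gamma$. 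Making this strictness argument precise, and handling the boundary behavior where $\mathfrak{M}$ might vanish, is where the real work lies.
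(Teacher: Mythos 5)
Your argument is correct and is essentially the paper's: the paper assumes equality for $\alpha = \langle L^{n-1}\rangle$ with $L$ big, movable and not nef, produces a big and nef $B$ computing $\widehat{\vol}(\alpha)$ from the Zariski decomposition, and derives $\langle L^{n-1}\rangle \cdot B = \vol(L)^{(n-1)/n}\vol(B)^{1/n}$, whence $L$ and $B$ are proportional by Proposition~\ref{prop teissier movable} --- the same proportionality statement that underlies the uniqueness clause of Theorem~\ref{thm mov-intersection after} which you invoke instead. The ``delicate issue'' you flag at the end is not actually an obstacle: once $B_\alpha$ computes $\mathfrak{M}(\alpha)$ and is therefore proportional to $L_\alpha$, you immediately get $\alpha = \langle L_\alpha^{n-1}\rangle = L_\alpha^{n-1}$ with $L_\alpha$ big and nef, hence $\alpha \in \CI_1(X)$, with no need to show that $\gamma$ vanishes or to invoke Theorem~\ref{thm strict concavity M hatvol}.
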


Thus $\widehat{\vol}$ and $\mathfrak{M}$ can be used to distinguish whether a big movable curve class lies in $\CI_1(X)$ or not. This result is important in Section \ref{section mobility}.

\begin{proof}
If $\alpha = B^{n-1}$ is a complete intersection class, then $\widehat{\vol}(\alpha) = \vol(B) = \mathfrak{M}(\alpha)$.  By continuity the equality holds true for any big curve class in $\CI_{1}(X)$.

Conversely, suppose that $\alpha$ is not in the complete intersection cone.  The claim is clearly true if $\mathfrak{M}(\alpha) = 0$, so by Theorem \ref{thm mov-intersection after} it suffices to consider the case when there is a big and movable divisor class $L$ such that $\alpha = \langle L^{n-1} \rangle$.  Note that $L$ can not be big and nef since $\alpha \notin \CI_1(X)$.

We prove $\widehat{\vol}(\alpha)> \mathfrak{M}(\alpha)$ by contradiction. First, by the definition of $\widehat{\vol}$ we always have $$\widehat{\vol}(\langle L^{n-1} \rangle)\geq \mathfrak{M}(\langle L^{n-1} \rangle) = \vol(L).$$
 Suppose $\widehat{\vol}(\langle L^{n-1} \rangle) =\vol(L)$. For convenience, we assume $\vol(L)=1$.  By rescaling the positive part of a Zariski decomposition, we find a big and nef divisor class $B$ with $\vol(B)=1$ such that $\widehat{\vol}(\langle L^{n-1} \rangle) = (\langle L^{n-1} \rangle \cdot B)^{n/n-1}$. For the divisor class $B$ we get
$$
\langle L^{n-1} \rangle\cdot B= 1 =  \vol(L)^{n-1/n}\vol(B)^{1/n}.
$$
By Proposition \ref{prop teissier movable}, this implies $L$ and $B$ are proportional which contradicts the non-nefness of $L$. Thus we must have $\widehat{\vol}(\langle L^{n-1} \rangle) > \vol(L) = \mathfrak{M}(\langle L^{n-1} \rangle)$.
\end{proof}

We also obtain:
\begin{prop}
\label{prop hatvol birational}
Let $X$ be a smooth projective variety of dimension $n$ and let $\alpha$ be a big and movable curve class. Assume that $\widehat{\vol}(\phi^* \alpha)=\widehat{\vol}(\alpha)$ for any birational morphism $\phi$.  Then $\alpha \in \CI_1(X)$.
\end{prop}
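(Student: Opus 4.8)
The plan is to reduce the statement to the single equality $\widehat{\vol}(\alpha) = \mathfrak{M}(\alpha)$ and then quote Theorem \ref{thm solution to xiao}. Since $\widehat{\vol} \geq \mathfrak{M}$ holds on $\Mov_1(X)$ by definition, and Theorem \ref{thm solution to xiao} says this inequality is strict exactly off $\CI_1(X)$, it suffices to show that the hypothesis forces $\widehat{\vol}(\alpha)=\mathfrak{M}(\alpha)$. The engine will be the following claim, valid for \emph{every} movable class (writing $\widehat{\vol}_Y$ for the corresponding function on $Y$):
\[
\mathfrak{M}(\alpha) = \inf_{\phi\colon Y \to X} \widehat{\vol}_Y(\phi^{*}\alpha),
\]
where the infimum ranges over all birational morphisms from smooth projective varieties $Y$. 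Granting this, the hypothesis $\widehat{\vol}_Y(\phi^{*}\alpha)=\widehat{\vol}(\alpha)$ for all $\phi$ immediately gives $\widehat{\vol}(\alpha)=\mathfrak{M}(\alpha)$, and Theorem \ref{thm solution to xiao} yields $\alpha \in \CI_1(X)$.

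For the lower bound $\widehat{\vol}_Y(\phi^{*}\alpha)\geq \mathfrak{M}(\alpha)$, I would use the reformulation of $\mathfrak{M}$ as an infimum over big and movable divisor classes established earlier in Section \ref{section movcone}. Given a big and nef class $A'$ on $Y$, which is a competitor in $\widehat{\vol}_Y$, its pushforward $\phi_{*}A'$ is big and movable on $X$, satisfies $\vol(\phi_{*}A')\geq \vol(A')$ (sections of $mA'$ inject into sections of $m\phi_{*}A'$), and obeys $A'\cdot \phi^{*}\alpha = \phi_{*}A'\cdot \alpha$ by the projection formula. Hence the ratio defining $\widehat{\vol}_Y$ at $A'$ dominates the ratio defining $\mathfrak{M}$ at $\phi_{*}A'$; taking infima gives the bound for every $\phi$.

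For the upper bound, I would pick big and movable divisors $D_k$ on $X$, normalized so that $\vol(D_k)=1$, with $(D_k\cdot\alpha)^{n/n-1}\to \mathfrak{M}(\alpha)$. Fujita approximation then produces birational maps $\phi_k\colon Y_k \to X$ and ample classes $A_k$ with $A_k \preceq \phi_k^{*}D_k$ and $\vol(A_k)\to 1$. The crucial point is that $\phi_k^{*}\alpha$ is again movable, since it pairs non-negatively with $\Eff^{1}(Y_k)$ via the projection formula together with the duality $\Mov_1(Y_k)=\Eff^1(Y_k)^{*}$; therefore $(\phi_k^{*}D_k - A_k)\cdot \phi_k^{*}\alpha \geq 0$, so $A_k\cdot \phi_k^{*}\alpha \leq \phi_k^{*}D_k\cdot \phi_k^{*}\alpha = D_k\cdot\alpha$. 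Feeding $A_k$ into $\widehat{\vol}_{Y_k}$ and letting $k\to\infty$ gives $\limsup_k \widehat{\vol}_{Y_k}(\phi_k^{*}\alpha)\leq \mathfrak{M}(\alpha)$, completing the claim. I note that this argument runs uniformly whether or not $\mathfrak{M}(\alpha)=0$, because it works directly with the defining infimum rather than with the divisor $L_\alpha$ of Theorem \ref{thm mov-intersection after}.

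The main obstacle I anticipate is the bookkeeping around the pullback of curve classes: one must verify that $\phi^{*}$ on $N_1$ is compatible with pushforward of divisors through the projection formula $\phi^{*}\alpha\cdot D' = \alpha\cdot \phi_{*}D'$, that it preserves movability, and that it is compatible with positive products so that $\mathfrak{M}$ is genuinely a birational invariant. Once these functorial properties are in hand, both inequalities are short, and the only analytically substantive input is Fujita approximation, which must be arranged to control the volume and the intersection number simultaneously in the upper bound.
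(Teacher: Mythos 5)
Your proof is correct and follows the same basic strategy as the paper: use Fujita approximation together with the projection formula to produce birational models $\phi$ on which $\widehat{\vol}(\phi^{*}\alpha)$ is forced down to $\mathfrak{M}(\alpha)$, deduce from the hypothesis that $\widehat{\vol}(\alpha)=\mathfrak{M}(\alpha)$, and conclude via Theorem \ref{thm solution to xiao}. The one genuine difference is which divisor you approximate. The paper first invokes Theorem \ref{thm mov-intersection after} to write $\alpha=\langle L^{n-1}\rangle$ and Fujita-approximates that canonical $L$; this only works when $\mathfrak{M}(\alpha)>0$, so the case $\mathfrak{M}(\alpha)=0$ requires a separate perturbation argument with $\alpha+\delta\xi$ and a contradiction with bigness. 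You instead Fujita-approximate a minimizing sequence $D_k$ for the defining infimum of $\mathfrak{M}(\alpha)$, which treats both cases uniformly and avoids Theorem \ref{thm mov-intersection after} entirely; you also record the easy lower bound, packaging the mechanism as the clean identity $\mathfrak{M}(\alpha)=\inf_{\phi}\widehat{\vol}(\phi^{*}\alpha)$, of which only the ``$\geq$'' half is actually needed for the proposition (the other half being $\widehat{\vol}\geq\mathfrak{M}$ on $X$ itself). The functorial inputs you flag --- movability of $\phi^{*}\alpha$ via $\Mov_1=(\Eff^1)^{*}$, $\phi_{*}\phi^{*}\alpha=\alpha$, and $\vol(\phi_{*}A')\geq\vol(A')$ --- are all standard and suffice; note that for the lower bound you do not even need $\phi_{*}A'$ to be movable, since $\mathfrak{M}$ is an infimum over all big divisor classes. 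Your version is a modest streamlining of the paper's argument rather than a different route.
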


\begin{proof}
We first consider the case when $\mathfrak{M}(\alpha) > 0$.  Let $L$ be a big movable divisor class satisfying $\langle L^{n-1} \rangle = \alpha$.  Choose a sequence of birational maps $\phi_{\epsilon}: Y_{\epsilon} \to X$ and ample divisor classes $A_{\epsilon}$ on $Y_{\epsilon}$ defining an $\epsilon$-Fujita approximation for $L$.  Then $\vol(L) \geq \vol(A_{\epsilon}) > \vol(L) -\epsilon$ and the classes $\phi_{\epsilon *} A_{\epsilon}$ limit to $L$. Note that
$A_{\epsilon} \cdot \phi_{\epsilon}^{*}\alpha = \phi_{\epsilon *}A_{\epsilon} \cdot \alpha$.
This implies that for any $\epsilon > 0$ we have
\begin{equation*}
\widehat{\vol}(\alpha) = \widehat{\vol}(\phi_{\epsilon}^{*}\alpha) \leq \frac{(\alpha \cdot \phi_{\epsilon *} A_{\epsilon})^{n/n-1}}{\vol(L)^{1/n-1}}.
\end{equation*}
As $\epsilon$ shrinks the right hand side approaches $\vol(L) = \mathfrak{M}(\alpha)$, and we conclude by Theorem \ref{thm solution to xiao}.

Next we consider the case when $\mathfrak{M}(\alpha) = 0$. Choose a class $\xi$ in the interior of $\Mov_{1}(X)$ and consider the classes $\alpha + \delta \xi$ for $\delta >0$.  The argument above shows that for any $\epsilon > 0$, there is a birational model $\phi_{\epsilon}: Y_{\epsilon} \to X$ such that $$\widehat{\vol}(\phi_{\epsilon}^{*}(\alpha + \delta \xi)) < \mathfrak{M}(\alpha + \delta \xi) + \epsilon.$$
  But we also have $\widehat{\vol}(\phi_{\epsilon}^{*} \alpha) \leq \widehat{\vol}(\phi_{\epsilon}^{*}(\alpha + \delta \xi))$ since the pullback of the nef curve class $\delta \xi$ is pseudo-effective.  Taking limits as $\epsilon \to 0$, $\delta \to 0$, we see that we can make the volume of the pullback of $\alpha$ arbitrarily small, a contradiction to the assumption and the bigness of $\alpha$.
\end{proof}

Let $L$ be a big divisor class and let $\alpha=\langle L^{n-1}\rangle$ be the corresponding big movable curve class. From the Zariski decomposition $\alpha=B^{n-1}+\gamma$, we get a ``canonical" map $\pi$ from a big divisor class to a big and nef divisor class, that is, $\pi(L):=B$. Note that the map $\pi$ is continuous and satisfies $\pi^2=\pi$. It is natural to ask whether we can compare $L$ and $B$. However, if $P_\sigma (L)$ is not nef then $L$ and $B$ can not be compared:
\begin{itemize}
\item if $L\succeq B$ then we have $\vol(L)\geq \vol(B)$ which contradicts with
    Theorem \ref{thm solution to xiao};
\item if $L\preceq B$ then we have $\langle L^{n-1}\rangle \preceq B^{n-1}$ which contradicts with $\gamma \neq 0$.
\end{itemize}
If we modify the map $\pi$ a little bit, we can always get a ``canonical" nef divisor class lying below the big divisor class.

\begin{thrm}
  \label{thm big to nef}
Let $X$ be a smooth projective variety of dimension $n$, and let $\alpha$ be a big movable curve class. Let $L$ be a big divisor class such that $\alpha=\langle L^{n-1}\rangle$, and let $\alpha=B^{n-1}+ \gamma$ be the Zariski decomposition of $\alpha$. Define the map $\widehat{\pi}$ from the cone of big divisor classes to the cone of big and nef divisor classes as
$$\widehat{\pi}(L):= \left(1- \left(1-\frac{\mathfrak{M}(\alpha)}{\widehat{\vol}(\alpha)}
  \right)^{1/n} \right) B.$$
Then $\widehat{\pi}$ is a surjective continuous map satisfying $L\succeq \widehat{\pi}(L)$ and $\widehat{\pi}^2=\widehat{\pi}$.
\end{thrm}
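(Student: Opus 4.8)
The plan is to reduce everything to the extended Diskant inequality established in the proof of Proposition \ref{prop teissier movable}, applied to the pair consisting of $L$ and its nef positive part $B$. First I would record the two computations that drive the argument. Since $\alpha$ is big we have $\widehat{\vol}(\alpha) > 0$, and by Theorem \ref{thm curve decomposition} the positive part $B = B_\alpha$ is big and nef with $\widehat{\vol}(\alpha) = \vol(B)$. Using the defining relations $\alpha = B^{n-1} + \gamma$ and $B \cdot \gamma = 0$, together with $B^n = \vol(B)$ for a big nef class, I obtain the crucial identity
$$\langle L^{n-1}\rangle \cdot B = \alpha \cdot B = B^{n-1}\cdot B + \gamma \cdot B = B^n = \vol(B) = \widehat{\vol}(\alpha).$$
It is harmless to assume $\mathfrak{M}(\alpha) > 0$ (otherwise the defining scalar is $0$ and there is nothing of substance to prove); replacing $L$ by its positive part $P_\sigma(L)$, which has the same positive product $\alpha$, the same volume, and satisfies $L \succeq P_\sigma(L)$, I may moreover assume $L$ is big and movable, so that $\vol(L) = \mathfrak{M}(\alpha)$ by Theorem \ref{thm mov-intersection after} and Corollary \ref{cor mov-intersection after}.

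For the inequality $L \succeq \widehat{\pi}(L)$, let $s$ be the largest real number with $L - sB$ pseudo-effective; since $B$ is nef it suffices to show $s \geq c$, where $c = 1 - (1 - \mathfrak{M}(\alpha)/\widehat{\vol}(\alpha))^{1/n}$ is the scalar in the definition of $\widehat{\pi}$. Applying the Diskant inequality for big and movable divisor classes to $L$ and $B$ and substituting $\langle L^{n-1}\rangle \cdot B = \vol(B)$, $\vol(L) = \mathfrak{M}(\alpha)$, $\vol(B) = \widehat{\vol}(\alpha)$, the inequality collapses, after dividing by $\vol(B)^{n/(n-1)}$, to
$$1 - \frac{\mathfrak{M}(\alpha)}{\widehat{\vol}(\alpha)} \geq (1 - s)^n.$$
Taking $n$-th roots (the case $s > 1$ being automatic since $c \leq 1$) gives $s \geq c$, hence $L - cB$ is pseudo-effective, i.e. $L \succeq \widehat{\pi}(L)$. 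I expect this to be the main obstacle: one must track the exponents carefully and confirm that the Diskant bound produces \emph{exactly} the coefficient $c$ appearing in the statement, rather than merely some positive constant.

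The remaining properties follow formally once I observe that $\widehat{\pi}$ restricts to the identity on big and nef classes. Indeed, if $N$ is big and nef then $\langle N^{n-1}\rangle = N^{n-1}$ lies in $\CI_1(X)$, its Zariski decomposition has positive part $N$ and zero negative part by the uniqueness in Theorem \ref{thm curve decomposition}, and $\mathfrak{M}(N^{n-1}) = \widehat{\vol}(N^{n-1}) = \vol(N)$ by Theorem \ref{thm solution to xiao}; thus the defining scalar equals $1$ and $\widehat{\pi}(N) = N$. Since $\widehat{\pi}(L) = cB$ is itself big and nef, this yields $\widehat{\pi}^2 = \widehat{\pi}$, and surjectivity onto the big and nef cone is immediate. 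Finally, continuity of $\widehat{\pi}$ follows by composing continuous maps: $L \mapsto \langle L^{n-1}\rangle$ is continuous on the big cone, $\alpha \mapsto B_\alpha$ is continuous by Theorem \ref{thm curve decomposition}, and $\mathfrak{M}$ and $\widehat{\vol}$ are continuous with $\widehat{\vol}(\alpha) > 0$ throughout the big locus, so the scalar $c$ and the product $c\,B_\alpha$ vary continuously.
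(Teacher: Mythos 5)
Your proposal is correct and follows essentially the same route as the paper: reduce to the extended Diskant inequality for big and movable classes applied to $P_\sigma(L)$ and $B$, use the identity $\alpha\cdot B=\vol(B)=\widehat{\vol}(\alpha)$ coming from $B\cdot\gamma=0$, and read off the lower bound on $s$; the idempotence, surjectivity and continuity are handled exactly as in the paper. The only cosmetic difference is that the paper disposes of the root-extraction sign issue by first showing $s\leq 1$ from $\vol(L)\leq\vol(B)$, whereas you note that $s>1$ is automatic since $c\leq 1$ — both are fine.
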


\begin{proof}
It is clear if $L$ is nef then we have $\widehat{\pi}(L)=L$, and this implies $\widehat{\pi}$ is surjective and $\widehat{\pi}^2=\widehat{\pi}$. By Theorem \ref{thm curve decomposition} and Theorem \ref{thm continuity big movable}, we get the continuity of $\widehat{\pi}$. So we only need to verify $L\succeq \widehat{\pi}(L)$.
And this follows from the Diskant inequality for big and movable divisor classes.

Let $s$ be the largest real number such that $L\succeq s B$. By the properties of $\sigma$-decompositions, $s$ is also the largest real number such that $P_\sigma (L)\succeq s B$.
First, observe that $s\leq 1$ since $$\vol(L)=\mathfrak{M}(\alpha)\leq \widehat{\vol}(\alpha)=\vol(B).$$
Applying the Diskant inequality to $P_\sigma (L)$ and $B$, we have
\begin{align*}
  (\langle P_\sigma (L) ^{n-1}\rangle\cdot B)^{n/n-1}-&\vol(L)\vol(B)^{1/n-1}\\
   &\geq ((\langle P_\sigma (L) ^{n-1}\rangle\cdot B)^{1/n-1}- s \vol(B)^{1/n-1})^n.
\end{align*}
Note that $\widehat{\vol}(\alpha)=(\langle P_\sigma (L) ^{n-1}\rangle\cdot \frac{B}{\vol(B)^{1/n}})^{n/n-1}$ and $\mathfrak{M}(\alpha)=\vol(L)$. The above inequality implies
  $$
  s \geq 1-\left(1-\frac{\mathfrak{M}(\alpha)}{\widehat{\vol}(\alpha)}\right)^{1/n},
  $$
which yields the desired relation $L\succeq \widehat{\pi}(L)$.
\end{proof}

\begin{exmple}
\label{mdsexample}
Let $X$ be a Mori Dream Space.  Recall that a small $\mathbb{Q}$-factorial modification (henceforth SQM) $\phi: X \dashrightarrow X'$ is a birational contraction (i.e.~does not extract any divisors) defined in codimension $1$ such that $X'$ is projective $\mathbb{Q}$-factorial.   \cite{hk00} shows that for any SQM the strict transform defines an isomorphism $\phi_{*}: N^{1}(X) \to N^{1}(X')$ which preserves the pseudo-effective and movable cones of divisors.  (More generally, any birational contraction induces an injective pullback $\phi^{*}: N^{1}(X') \to N^{1}(X)$ and dually a surjection $\phi_{*}: N_{1}(X) \to N_{1}(X')$.)  The SQM structure induces a chamber decomposition of the pseudo-effective and movable cones of divisors.

One would like to see a ``dual picture'' in $N_{1}(X)$ of this chamber decomposition.  However, it does not seem interesting to simply dualize the divisor decomposition: the resulting cones are no longer pseudo-effective and are described as intersections instead of unions.  Motivated by the Zariski decomposition for curves, we define a chamber structure on the movable cone of curves as a union of the complete intersection cones on SQMs.

Note that for each SQM we obtain by duality an isomorphism $\phi_{*}: N_{1}(X) \to N_{1}(X')$ which preserves the movable cone of curves.  We claim that the strict transforms of the various complete intersection cones define a chamber structure on $\Mov_{1}(X)$.  More precisely, given any birational contraction $\phi: X \dashrightarrow X'$ with $X'$ normal projective, define
\begin{equation*}
\CI^{\circ}_{\phi} := \bigcup_{A \textrm{ ample on }X'} \langle \phi^{*}A^{n-1} \rangle.
\end{equation*}
Then
\begin{itemize}
\item $\Mov_{1}(X)$ is the union over all SQMs $\phi: X \dashrightarrow X'$ of $\overline{\CI^{\circ}_{\phi}} = \phi^{-1}_{*}\CI_{1}(X')$, and the interiors of the $\overline{\CI^{\circ}_{\phi}}$ are disjoint.
\item The set of classes in $\Mov_{1}(X)_{\mathfrak{M}}$ is the disjoint union over all birational contractions $\phi: X \dashrightarrow X'$ of the $\CI^{\circ}_{\phi}$.
\end{itemize}
To see this, first recall that for a pseudo-effective divisor $L$ the $\sigma$-decomposition of $L$ and the volume are preserved by $\phi_{*}$.  We know that each $\alpha \in \Mov_{1}(X)_{\mathfrak{M}}$ has the form $\langle L^{n-1} \rangle$ for a unique big and movable divisor $L$.   If $\phi: X \dashrightarrow X'$ denotes the birational canonical model obtained by running the $L$-MMP, and $A$ denotes the corresponding ample divisor on $X'$, then $\phi_{*}\alpha = A^{n-1}$ and $\alpha = \langle \phi^{*}A^{n-1} \rangle$.  The various claims now can be deduced from the properties of divisors and the MMP for Mori Dream Spaces as in \cite[1.11 Proposition]{hk00}.

Since the volume of divisors behaves compatibly with strict transforms of pseudo-effective divisors, the description of $\phi_{*}$ above shows that $\mathfrak{M}$ also behaves compatibly with strict transforms of movable curves under an SQM.  However, the volume function can change: we may well have $\widehat{\vol}(\phi_{*}\alpha) \neq \widehat{\vol}(\alpha)$.  The reason is that the pseudo-effective cone of curves is also changing as we vary $\phi$.  In particular, the set
\begin{equation*}
C_{\alpha,\phi} := \{ \phi_{*}\alpha - \gamma | \gamma \in \Eff_{1}(X') \}
\end{equation*}
will look different as we vary $\phi$.  Since $\widehat{\vol}$ is the same as the maximum value of $\mathfrak{M}(\beta)$ for $\beta \in C_{\alpha,\phi}$, the volume and Zariski decomposition for a given model will depend on the exact shape of $C_{\alpha,\phi}$.
\end{exmple}

\begin{rmk}
Theorem \ref{thm solution to xiao} and Theorem \ref{thm big to nef} also hold for smooth varieties over any algebraically closed field and compact hyperk\"ahler manifolds as explained in Section \ref{section preliminaries}.
\end{rmk}

\section{Comparison with mobility}
\label{section mobility}

In this section we give the proof of the main result, comparing the volume function for curves with its mobility function.
Recall from the introduction that we are trying to show (rearranged in a slightly different order):

\begin{thrm} \label{volversusmob2}
Let $X$ be a smooth projective variety of dimension $n$ and let $\alpha \in \Eff_{1}(X)$ be a pseudo-effective curve class.  Then the following results hold:
\begin{enumerate}
\item $\widehat{\vol}(\alpha) \leq \mob(\alpha) \leq n! \widehat{\vol}(\alpha)$.
\item Assume Conjecture \ref{conj mobandciconj}.  Then $\mob(\alpha) = \widehat{\vol}(\alpha)$.
\item $\widehat{\vol}(\alpha) = \wmob(\alpha)$.
\end{enumerate}
\end{thrm}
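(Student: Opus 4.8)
The plan is to prove all three parts at once by comparing both mobility functions to the volume of the positive part of the Zariski decomposition of $\alpha$. Since $\mob$, $\wmob$ and $\widehat{\vol}$ are continuous and homogeneous on $N_1(X)$, it suffices to treat big classes, so I would fix $\alpha \in \Eff_1(X)^{\circ}$ and write its Zariski decomposition $\alpha = B_\alpha^{n-1} + \gamma$ as in Theorem \ref{thm curve decomposition}, recalling that $\widehat{\vol}(\alpha) = \vol(B_\alpha)$, that $B_\alpha$ is big and nef, and that $\gamma$ is pseudo-effective with $B_\alpha \cdot \gamma = 0$. The argument then splits into lower bounds, obtained by constructing many (possibly very singular) curves through general points, and upper bounds, obtained from intersection theory; the boundary of $\Eff_1(X)$ is handled afterwards by continuity.

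For the lower bounds I would use the monotonicity of $\mob$ and $\wmob$ under $\preceq$ established in \cite{lehmann14}: since $\gamma$ is pseudo-effective we have $\alpha \succeq B_\alpha^{n-1}$, so $\mob(\alpha) \geq \mob(B_\alpha^{n-1})$ and $\wmob(\alpha) \geq \wmob(B_\alpha^{n-1})$. It then suffices to show $\mob(B^{n-1}) \geq \vol(B)$ and $\wmob(B^{n-1}) \geq \vol(B)$ for big and nef $B$, which follow from explicit complete intersection constructions. For $\mob$, taking $n-1$ general members of $|mB|$ all passing through a set of general points produces a curve of class $(mB)^{n-1}$ through those points, and since one can impose $\approx \vol(B) m^n/n!$ such conditions using $h^0(mB) = \vol(B)m^n/n! + o(m^n)$, the factor $n!$ in the definition of $\mob$ is exactly cancelled to give $\mob(B^{n-1}) \geq \vol(B)$. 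For $\wmob$ one instead requires the members of $|mB|$ to vanish to order $t$ at the points; the complete intersection then has multiplicity $\geq t^{n-1}$ at each, and the dimension count $b \approx \vol(B)(m/t)^n$ yields $\wmob(B^{n-1}) \geq \vol(B)$ after the (different) normalization in the definition of $\wmob$.

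The upper bounds are the heart of the matter. For the weighted mobility I would bound the number of general points through which an effective cycle of class $\nu m\alpha$ passes with multiplicity $\geq \nu$ by intersecting against an arbitrary big and nef class $A$ and controlling the multiplicities by a Seshadri-type estimate after blowing up; optimizing over $A$ reproduces exactly the polar-transform expression defining $\widehat{\vol}(\alpha)$ and gives $\wmob(\alpha) \leq \widehat{\vol}(\alpha)$, which together with the lower bound proves part (3). For ordinary mobility the analogous estimate bounds the dimension of the family (Chow variety) of curves of class $m\alpha$ in terms of $\vol$; combined with the incidence dimension count $\dim T \geq b(n-1)$ coming from forcing $b$ general points onto a member, this produces $\mob(\alpha) \leq n!\,\widehat{\vol}(\alpha)$. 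The surplus factor $n!$ records the gap between the crude cycle-theoretic bound and the efficiency of genuine complete intersections, and this completes part (1).

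Finally, part (2) amounts to closing this $n!$ gap under Conjecture \ref{conj mobandciconj}. The conjecture supplies the sharp identity $\mob(B^{n-1}) = \vol(B)$ for complete intersection classes, and I would propagate it to an arbitrary big movable $\alpha$ through the Zariski decomposition, using that the negative part $\gamma$ is rigid: its relevant support is governed by $\mathbb{B}_{+}(B_\alpha)$ as in Corollary \ref{cor boundary to boundary}, so it avoids general points and cannot increase the asymptotic count of general points lying on members of $m\alpha$. Thus only the complete intersection part $B_\alpha^{n-1}$ contributes, giving $\mob(\alpha) = \mob(B_\alpha^{n-1}) = \vol(B_\alpha) = \widehat{\vol}(\alpha)$, with Theorem \ref{thm solution to xiao} guaranteeing that it is exactly the complete intersection class which is seen. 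I expect the main obstacle to be precisely these upper bounds: the multi-point Seshadri and Chow-dimension estimates needed unconditionally for parts (1) and (3), and, for part (2), the rigorous reduction of a general class to the complete intersection case, which must control the contribution of the rigid part to families of curves through general points.
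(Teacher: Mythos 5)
Your lower bounds and your treatment of part (3) are essentially the paper's: monotonicity of $\mob$ and $\wmob$ under $\preceq$, the decomposition $\alpha \succeq B_\alpha^{n-1}$ from Theorem \ref{thm curve decomposition}, the unconditional identity $\wmob(B^{n-1})=B^n$ and the inequality $\mob(B^{n-1})\geq B^n$ from \cite{lehmann14}, and a multiplicity estimate against divisors with large multiplicity at general points for the upper bound on $\wmob$ (this is Proposition \ref{multiplicityestimate}; note you still need to decompose an optimal family into irreducible components, apply the estimate to each, and control the number of components by $mA\cdot\alpha$). However, two of your steps have genuine gaps.

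The upper bound $\mob(\alpha)\leq n!\,\widehat{\vol}(\alpha)$ cannot be obtained by bounding the dimension of the Chow variety of curves of class $m\alpha$ in terms of $\vol$ together with the incidence count $\dim T\geq b(n-1)$. The Chow variety is simply too big: for $X=\mathbb{P}^3$ and $\alpha$ the line class, planar curves of degree $m$ form a family of dimension $\sim m^2/2$, whereas $\widehat{\vol}(m\alpha)=m^{3/2}$, so the required bound $\dim T\lesssim (n-1)\widehat{\vol}(m\alpha)$ fails badly, and restricting to families dominating $X^b$ does not give an a priori dimension bound either. The paper avoids dimension counts entirely: it applies Proposition \ref{multiplicityestimate} with all $k_i=1$ and $r=\mc(U_i)-1$ to each irreducible component $U_i$ of an optimal family, obtaining $\widehat{\vol}(\beta_i)\geq \mc(U_i)-1$ directly, then sums using superadditivity of $\widehat{\vol}$; the factor $n!$ is purely the normalization $m^{n/n-1}/n!$ in the definition of $\mob$, not a loss from a cycle-theoretic bound.

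For part (2), the assertion that the negative part $\gamma$ is rigid, avoids general points, and hence that $\mob(\alpha)=\mob(B_\alpha^{n-1})$, is assuming the conclusion. An optimal family of irreducible curves of class $m\alpha$ need not split into pieces of classes near $mB_\alpha^{n-1}$ and $m\gamma$ and does not see the Zariski decomposition at all; moreover $\gamma$ is merely pseudo-effective with $B_\alpha\cdot\gamma=0$, not rigid, and Corollary \ref{cor boundary to boundary} says nothing about this. The paper's proof requires a genuinely nontrivial reduction: Lemma \ref{lemma fl pull back mob} (from \cite{fl14}) produces a big movable $\beta\preceq\alpha$ with $\mob(\alpha)=\mob(\beta)=\mob(\phi^*\beta)$ for every birational $\phi$, and then a Fujita-approximation argument on models $Y_\epsilon$, applying Conjecture \ref{conj mobandciconj} to the big and nef classes $A_\epsilon$, squeezes $\mob(\phi_\epsilon^*\beta)$ between $\vol(A_\epsilon)$ and $\vol(A_\epsilon+\epsilon\phi_\epsilon^*G)$, forcing $\mob(\beta)=\mathfrak{M}(\beta)$ and hence $\beta\in\CI_1(X)$ by Theorem \ref{thm solution to xiao}. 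That birational-invariance input is the essential idea missing from your sketch.
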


The upper bound in the first part improves the related result  \cite[Theorem 3.2]{xiao15}. Before giving the proof, we repeat the following estimate of $\widehat{\vol}$ in \cite{lehmannxiao2015a}.

\begin{prop} \label{multiplicityestimate}
Let $X$ be a smooth projective variety of dimension $n$.  Choose positive integers $\{k_{i} \}_{i=1}^{r}$.  Suppose that $\alpha \in \Mov_{1}(X)$ is represented by a family of irreducible curves such that for any collection of general points $x_{1},x_{2},\ldots,x_{r},y$ of $X$, there is a curve in our family which contains $y$ and contains each $x_{i}$ with multiplicity $\geq k_{i}$.  Then
\begin{equation*}
\widehat{\vol}(\alpha)^{{n-1}/{n}} \geq \mathfrak{M}(\alpha)^{{n-1}/{n}} \geq \frac{\sum_{i} k_{i}}{r^{1/n}}.
\end{equation*}
\end{prop}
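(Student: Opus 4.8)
The first inequality needs no work: since $\alpha$ is movable, the definitions give $\widehat{\vol}(\alpha) \geq \mathfrak{M}(\alpha)$ (as recorded at the start of Section \ref{ci and mov section}), and raising to the power $(n-1)/n$ preserves this. So the whole content is the lower bound $\mathfrak{M}(\alpha)^{{(n-1)}/{n}} \geq \sum_i k_i / r^{1/n}$. Unwinding the definition of $\mathfrak{M}$, and using that $x \mapsto x^{(n-1)/n}$ commutes with the infimum, this is equivalent to showing that for every big divisor class $L$ one has
\begin{equation*}
\frac{L \cdot \alpha}{\vol(L)^{1/n}} \geq \frac{\sum_i k_i}{r^{1/n}}.
\end{equation*}
Since the left-hand side is continuous and scale-invariant on the big cone, it suffices to prove this for $L$ a big Cartier divisor, the general case following by density and continuity of $\vol$.

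The plan is to bound $L \cdot \alpha$ below by intersecting a general member $C$ of the given family against a divisor in $|mL|$ with high multiplicity at the marked points. Concretely, I would fix general points $x_1,\ldots,x_r$ and an integer $a = a_m$. Imposing multiplicity $\geq a$ at each $x_i$ cuts out a subspace of $H^0(X,mL)$ of codimension at most $r\binom{a+n-1}{n} = r a^n/n! + O(a^{n-1})$. Comparing with $h^0(X,mL) = \vol(L) m^n/n! + o(m^n)$, for any $\epsilon>0$ the choice $a_m = \lfloor (1-\epsilon)(\vol(L)/r)^{1/n} m\rfloor$ makes this subspace nonzero for all large $m$. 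Because a nonzero linear system has base locus contained in a divisor, a general further point $y$ lies off that base locus, so there is a member $D$ of the subsystem with $y \notin D$; choosing a curve $C$ in the family through $y$ with $\mult_{x_i}(C) \geq k_i$, we get $y \in C \setminus D$, hence $C \not\subseteq D$. The local intersection inequality on the smooth variety $X$, namely $i(x_i; C, D) \geq \mult_{x_i}(C)\,\mult_{x_i}(D) \geq k_i\, a_m$, then yields
\begin{equation*}
m\,(L \cdot \alpha) = D \cdot C \geq \sum_i k_i\, a_m.
\end{equation*}
Dividing by $m$, sending $m \to \infty$, and then $\epsilon \to 0$ gives $L \cdot \alpha \geq (\vol(L)/r)^{1/n}\sum_i k_i$. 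Taking the infimum over all big $L$ completes the argument.

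The main obstacle is the genericity bookkeeping, and above all the need to guarantee $C \not\subseteq D$ so that $D \cdot C$ is the honest finite intersection number $mL\cdot\alpha$ rather than being undefined. This is precisely the role of the auxiliary point $y$ in the hypothesis: it forces the curve to move off any fixed divisor $D$ in our subsystem. A secondary technical point is that both the lower-order term $O(a^{n-1})$ in the multiplicity count and the error $o(m^n)$ in the volume asymptotics must be absorbed, which is why the estimate is obtained only after inserting the slack parameter $\epsilon$ and passing to the limit $m \to \infty$ before letting $\epsilon \to 0$.
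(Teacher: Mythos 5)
Your argument is correct and is essentially the paper's own proof: both reduce to the intersection bound $L \cdot \alpha \geq (\vol(L)/r)^{1/n}\sum_i k_i$ for big $L$, produce a divisor numerically equivalent to $mL$ with multiplicity roughly $m(\vol(L)/r)^{1/n}$ at the general points $x_i$ (the paper cites this as a ``standard argument'' where you spell out the parameter count), and use the auxiliary point $y$ to ensure the curve is not contained in that divisor before comparing local intersection multiplicities. No substantive difference.
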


This is just a rephrasing of well-known results in birational geometry; see for example \cite[V.2.9 Proposition]{k96}.

\begin{proof}
By continuity and rescaling invariance, it suffices to show that if $L$ is a big and movable Cartier divisor class then
\begin{equation*}
\left( \sum_{i=1}^{r} k_{i} \right) \frac{\vol(L)^{1/n}}{r^{1/n}} \leq L \cdot C.
\end{equation*}
A standard argument (see for example \cite[Example 8.22]{lehmann14}) shows that for any $\epsilon > 0$ and any very general points $\{ x_{i} \}_{i=1}^{r}$ of $X$ there is a positive integer $m$ and a Cartier divisor $M$ numerically equivalent to $mL$ and such that $\mult_{x_{i}}M \geq mr^{-1/n}\vol(L)^{1/n} - \epsilon$ for every $i$.  By the assumption on the family of curves we may find an irreducible curve $C$ with multiplicity $\geq k_{i}$ at each $x_{i}$ that is not contained $M$.  Then
\begin{equation*}
m(L \cdot C) \geq \sum_{i=1}^{r} k_{i} \mult_{x_{i}}M \geq \left(\sum_{i = 1}^{r} k_{i} \right)  \left(\frac{ m\vol(L)^{1/n}}{r^{1/n}} - \epsilon \right).
\end{equation*}
Divide by $m$ and let $\epsilon$ go to $0$ to conclude.
\end{proof}

\begin{exmple}
The most important special case is when $\alpha$ is the class of a family of irreducible curves such that for any two general points of $X$ there is a curve in our family containing them.  Proposition \ref{multiplicityestimate} then shows that $\widehat{\vol}(\alpha) \geq 1$ and $\mathfrak{M}(\alpha) \geq 1$.
\end{exmple}

We also need to give a formal definition of the mobility count.  Its properties are studied in more depth in \cite{lehmann14}.

\begin{defn} \label{mcdefn}
Let $X$ be an integral projective variety and let $W$ be a reduced variety.  Suppose that $U\subset W \times X$ is a subscheme and let $p: U \to W$ and $s: U \to X$ denote the projection maps.  The mobility count $\mc(p)$ of the morphism $p$ is the maximum non-negative integer $b$ such that the map
\begin{equation*}
U \times_{W} U \times_{W} \ldots \times_{W} U \xrightarrow{s \times s \times \ldots \times s} X \times X \times \ldots \times X
\end{equation*}
is dominant, where we have $b$ terms in the product on each side.  (If the map is dominant for every positive integer $b$, we set $\mc(p) = \infty$.)

For $\alpha \in N_{1}(X)_{\mathbb{Z}}$, the mobility count of $\alpha$, denoted $\mc(\alpha)$, is defined to be the largest mobility count of any family of effective curves representing $\alpha$.  We define the rational mobility count $\rmc(\alpha)$ in the analogous way by restricting our attention to rational families.
\end{defn}

The mobility is then defined as
\begin{equation*}
\mob(\alpha) = \limsup_{m \to \infty} \frac{\mc(m\alpha)}{m^{n/n-1}/n!}.
\end{equation*}

\begin{proof}[Proof of Theorem \ref{volversusmob2}:]
(1) We compare $\mob$ and $\widehat{\vol}$. We first prove the upper bound.  By continuity and homogeneity it suffices to prove the upper bound for a class $\alpha$ in the natural sublattice of integral classes $N_{1}(X)_{\mathbb{Z}}$.  Suppose that $p: U \to W$ is a family of curves representing $m\alpha$ of maximal mobility count for a positive integer $m$.  Suppose that a general member of $p$ decomposes into irreducible components $\{ C_{i} \}$; arguing as in \cite[Corollary 4.10]{lehmann14}, we must have $\mc(p) = \sum_{i} \mc(U_{i})$, where $U_{i}$ represents the closure of the family of deformations of $C_{i}$.  We also let $\beta_{i}$ denote the numerical class of $C_{i}$.

Suppose that $\mc(U_{i}) > 1$.  Then we may apply Proposition \ref{multiplicityestimate} with all $k_{i} = 1$ and $r = \mc(U_{i})-1$ to deduce that
\begin{equation*}
\widehat{\vol}(\beta_{i}) \geq \mc(U_{i}) - 1.
\end{equation*}
If $\mc(U_{i}) \leq 1$ then Proposition \ref{multiplicityestimate} does not apply but at least we still know that $\widehat{\vol}(\beta_i) \geq 0 \geq \mc(U_{i})-1$.  Fix an ample Cartier divisor $A$, and note that the number of components $C_{i}$ is at most $mA \cdot \alpha$.  All told, we have
\begin{align*}
\widehat{\vol}(m\alpha) & \geq \sum_{i} \widehat{\vol}(\beta_{i}) \\
& \geq \sum_{i} ( \mc(U_{i}) - 1) \\
& \geq \mc(m\alpha) - mA \cdot \alpha.
\end{align*}
Thus,
\begin{align*}
\widehat{\vol}(\alpha) & = \limsup_{m \to \infty} \frac{\widehat{\vol}(m\alpha)}{m^{n/n-1}} \\
& \geq \limsup_{m \to \infty} \frac{\mc(m\alpha) - mA \cdot \alpha}{m^{n/n-1}} = \frac{\mob(\alpha)}{n!}.
\end{align*}

The lower bound relies on the Zariski decomposition of curves in Theorem \ref{thm curve decomposition}.
By \cite[Theorem 6.11 and Example 6.2]{lehmann14} we have
 $$B^{n} \leq \mob(B^{n-1})$$
for any nef divisor $B$. With Theorem \ref{thm curve decomposition}, this implies
 $$
 \widehat{\vol}(B^{n-1}) \leq \mob(B^{n-1}).
 $$
In general, for a big curve class $\alpha$ we have
\begin{align*}
\mob(\alpha) & \geq \sup_{B \textrm{ nef, }
\alpha \succeq B^{n-1}} \mob(B^{n-1})  \\
& \geq \sup_{B \textrm{ nef, }
\alpha \succeq B^{n-1}} B^n \\
& = \widehat{\vol}(\alpha).
\end{align*}
where the last equality again follows from Theorem \ref{thm curve decomposition}. This finishes the proof.

(2) To prove the second part of Theorem \ref{volversusmob2}, we need a result of \cite{fl14}:

\begin{lem}[see \cite{fl14} Corollary 6.16]
\label{lemma fl pull back mob}
Let $X$ be a smooth projective variety of dimension $n$ and let $\alpha$ be a big curve class.  Then there is a big movable curve class $\beta$ satisfying $\beta \preceq \alpha$ such that
\begin{equation*}
\mob(\alpha) = \mob(\beta) = \mob(\phi^{*}\beta)
\end{equation*}
for any birational map $\phi: Y \to X$ from a smooth variety $Y$.
\end{lem}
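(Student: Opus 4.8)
The plan is to realize $\beta$ as the class swept out by the genuinely moving components of an optimal family for $\alpha$, and then to verify in turn its three required properties: movability together with $\beta\preceq\alpha$, bigness, and stability of the mobility under pullback. First I would construct $\beta$. For each large $m$, choose a family $p_m\colon U_m \to W_m$ of effective curves of class $m\alpha$ achieving $\mc(m\alpha)$, and decompose a general member into irreducible components. Exactly as in the additivity argument used for part (1) (following \cite[Corollary 4.10]{lehmann14}), the count $\mc(p_m)$ splits as the sum of the counts of the component subfamilies. I would then separate the components into those whose deformations dominate $X$ and the rest, and let $\beta_m$ be the class of the sum of the dominating (movable) components. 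This $\beta_m$ is represented by a covering family, so $\beta_m \in \Mov_1(X)$, and $\beta_m \preceq m\alpha$ since only effective classes were discarded. Fixing an ample class $A$, the number of components is at most $mA\cdot\alpha$, so the non-dominating components contribute a total count of $O(m)$, which is negligible after dividing by $m^{n/n-1}$. Using the homogeneity, monotonicity, and continuity of $\mob$ from \cite{lehmann14} together with the closedness of $\Mov_1(X)$, any accumulation point $\beta$ of $\{\beta_m/m\}$ lies in $\Mov_1(X)$, satisfies $\beta \preceq \alpha$, and has $\mob(\beta) = \mob(\alpha)$.

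For bigness: since $\alpha$ is big we have $\mob(\alpha) > 0$, hence $\mob(\beta) > 0$. As the mobility of a curve class is positive precisely on the big cone (cf. \cite{lehmann14}, or the bound $\mob \le n!\,\widehat{\vol}$ from part (1)), the class $\beta$ is big. Thus $\beta$ is a big movable class with $\beta \preceq \alpha$ and $\mob(\beta)=\mob(\alpha)$.

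It remains to prove $\mob(\beta) = \mob(\phi^{*}\beta)$ for every birational $\phi\colon Y \to X$ with $Y$ smooth, where $\phi^{*}$ denotes the pullback on movable curve classes, characterized by $\phi_{*}\phi^{*}\beta = \beta$ and computed on a covering family by strict transform. I would prove the two inequalities separately, in each case exploiting that the mobility count tests only \emph{general} points, which avoid the exceptional locus of $\phi$ and its image and hence lie in the open set where $\phi$ is an isomorphism. For $\mob(\phi^{*}\beta) \ge \mob(\beta)$, the strict transforms of an optimal covering family for $m\beta$ on $X$ form a family of class $m\phi^{*}\beta$ on $Y$ passing through the preimages of the chosen general points, so the count can only grow. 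For the reverse inequality, pushing forward a covering family for $m\phi^{*}\beta$ on $Y$ produces a family of class $\phi_{*}(m\phi^{*}\beta) = m\beta$ on $X$ through the images of general points; hence the two counts agree in the limit.

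I expect the main obstacle to be this birational-invariance step, specifically making the pullback $\phi^{*}$ on movable curve classes precise and checking that the strict transform of a general covering curve of class $m\beta$ genuinely represents $m\phi^{*}\beta$ as a numerical class, independently of the chosen family; this is exactly where the cycle-theoretic machinery of \cite{fl14} is needed. By contrast, the limiting extraction of $\beta$ and the estimate that the rigid components contribute negligibly are routine once one has the continuity and homogeneity of $\mob$.
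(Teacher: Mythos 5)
The paper offers no proof of this lemma: it is imported verbatim from \cite{fl14}, Corollary 6.16. Your attempt therefore has to be judged as a free-standing argument, and while its architecture (extract the movable part of optimal families, pass to a limit, treat birational invariance by transporting families) is the right one, two of the three steps have genuine gaps. The first is the deduction of $\mob(\beta)=\mob(\alpha)$. Monotonicity gives $\mob(\beta)\leq\mob(\alpha)$, but for the reverse inequality you need
\begin{equation*}
\limsup_{m\to\infty}\frac{\mc(\beta_{m})}{m^{n/n-1}/n!}\;\leq\;\mob(\beta),
\end{equation*}
and this does not follow from continuity and homogeneity of $\mob$: a single mobility count of a varying class $\beta_{m}$ does not bound $\mob$ of the limiting class from above, because the $\limsup$ defining $\mob(\beta_{m}/m)$ may be attained along an entirely different sequence of multiples. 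The only uniform upper bound on $\mc$ available from the arguments in this paper is of the shape $\mc(\gamma)\leq\widehat{\vol}(\gamma)+A\cdot\gamma$ (from the proof of Theorem \ref{volversusmob2}(1)), which loses a factor of $n!$ and so cannot yield the equality. Producing the required uniform estimate is precisely the technical content of \cite{fl14}, Section 6; it cannot be waved through with ``continuity of $\mob$.''

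The second gap is in the inequality $\mob(\phi^{*}\beta)\geq\mob(\beta)$. The pushforward direction is fine, but the strict transform of a dominating family of class $m\beta$ does \emph{not} represent $m\phi^{*}\beta$: by the negativity lemma (this is exactly Lemma \ref{birneglem} of this paper) its class is $\phi^{*}\beta_{m}'-e_{m}$, where $\beta_{m}'\preceq m\beta$ is the class of the dominating components only and $e_{m}$ is an effective exceptional class. The discrepancy $m\phi^{*}\beta-(\phi^{*}\beta_{m}'-e_{m})=\phi^{*}(m\beta-\beta_{m}')+e_{m}$ involves the numerical pullback of the discarded non-dominating effective part, and $\phi^{*}$ of a merely effective (non-movable) curve class need not be pseudo-effective; hence you cannot invoke monotonicity of $\mc$ to conclude $\mc(m\phi^{*}\beta)\geq\mc(m\beta)$. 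Repairing this requires knowing that $\beta_{m}'/m\to\beta$ (which feeds back into the first gap) and then absorbing the $o(m)$ discrepancy into a small ample perturbation before using continuity of $\mob$. You correctly identify this as ``the main obstacle,'' but identifying it is not the same as resolving it: as written, both the equality $\mob(\alpha)=\mob(\beta)$ and the birational invariance remain unproved.
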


We now prove the statement via a sequence of claims.

\begin{claim}
Assume Conjecture \ref{conj mobandciconj}.  If $\beta$ is a movable curve class with $\mathfrak{M}(\beta)>0$, then for any $\epsilon > 0$ there is a birational map $\phi_{\epsilon}: Y_{\epsilon} \to X$ such that
\begin{equation*}
\mathfrak{M}(\beta) - \epsilon \leq \mob(\phi_{\epsilon}^{*}\beta) \leq \mathfrak{M}(\beta) + \epsilon.
\end{equation*}
\end{claim}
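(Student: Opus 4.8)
The plan is to use Theorem \ref{thm mov-intersection after} to realize $\beta$ as a positive product and then sandwich the mobility of its pullback between the mobilities of two nearby complete intersection classes, where Conjecture \ref{conj mobandciconj} applies. Since $\mathfrak{M}(\beta) > 0$, Theorem \ref{thm mov-intersection after} produces a unique big and movable divisor class $L$ with $\beta = \langle L^{n-1} \rangle$ and $\mathfrak{M}(\beta) = \vol(L)$. Fix an ample class $H$ on $X$. For each $\epsilon > 0$ I would choose a sufficiently fine Fujita approximation $\phi_{\epsilon}: Y_{\epsilon} \to X$ with an ample class $A_{\epsilon}$ on $Y_{\epsilon}$ so that $\phi_{\epsilon}^{*}L - A_{\epsilon}$ is effective, $\vol(A_{\epsilon}) > \vol(L) - \epsilon$, and the positive product is approximated exactly as in the proof of Proposition \ref{prop teissier movable} (via \cite[Theorem 6.22]{fl14}), namely $\phi_{\epsilon}^{*}(\beta - \epsilon H^{n-1}) \preceq A_{\epsilon}^{n-1} \preceq \phi_{\epsilon}^{*}(\beta + \epsilon H^{n-1})$. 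Throughout I would use the birational invariance $\phi_{\epsilon}^{*}\beta = \langle (\phi_{\epsilon}^{*}L)^{n-1} \rangle$ and the monotonicity of $\mob$ with respect to $\preceq$, both available from \cite{fl14, lehmann14}.

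For the lower bound, since $\phi_{\epsilon}^{*}L \succeq A_{\epsilon}$ and positive products are monotone, $\phi_{\epsilon}^{*}\beta = \langle (\phi_{\epsilon}^{*}L)^{n-1} \rangle \succeq A_{\epsilon}^{n-1}$. Monotonicity of mobility together with Conjecture \ref{conj mobandciconj} applied to the ample class $A_{\epsilon}$ on the smooth variety $Y_{\epsilon}$ then gives $\mob(\phi_{\epsilon}^{*}\beta) \geq \mob(A_{\epsilon}^{n-1}) = A_{\epsilon}^{n} = \vol(A_{\epsilon}) > \mathfrak{M}(\beta) - \epsilon$.

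For the upper bound, the idea is to absorb the error in the positive-product approximation into an ample class. From the displayed inequality, $\phi_{\epsilon}^{*}\beta \preceq A_{\epsilon}^{n-1} + \epsilon (\phi_{\epsilon}^{*}H)^{n-1}$. Since $\delta^{n-1}(\phi_{\epsilon}^{*}H)^{n-1}$ occurs among the (nonnegative) terms of the expansion of $(A_{\epsilon} + \delta\, \phi_{\epsilon}^{*}H)^{n-1}$, taking $\delta = \epsilon^{1/(n-1)}$ yields $\phi_{\epsilon}^{*}\beta \preceq (A_{\epsilon} + \epsilon^{1/(n-1)} \phi_{\epsilon}^{*}H)^{n-1}$, where $A_{\epsilon}' := A_{\epsilon} + \epsilon^{1/(n-1)}\phi_{\epsilon}^{*}H$ is ample on $Y_{\epsilon}$. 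Monotonicity of $\mob$ and Conjecture \ref{conj mobandciconj} then give $\mob(\phi_{\epsilon}^{*}\beta) \leq \mob((A_{\epsilon}')^{n-1}) = (A_{\epsilon}')^{n}$. It remains to check that $(A_{\epsilon}')^{n}$ exceeds $\vol(L)$ by an amount tending to $0$: expanding the $n$-th power, each mixed term $A_{\epsilon}^{n-k}(\phi_{\epsilon}^{*}H)^{k}$ with $k \geq 1$ is bounded by a fixed intersection number independent of $\epsilon$ (fix $c$ with $cH - L$ effective, so $A_{\epsilon} \preceq c\,\phi_{\epsilon}^{*}H$, and pair the pseudo-effective difference against the nef products to telescope down to $c^{n-k}H^{n}$), and these terms carry positive powers of $\epsilon^{1/(n-1)}$. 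Hence $(A_{\epsilon}')^{n} = A_{\epsilon}^{n} + o(1) \leq \vol(L) + o(1)$, and after shrinking $\epsilon$ we obtain $\mob(\phi_{\epsilon}^{*}\beta) \leq \mathfrak{M}(\beta) + \epsilon$.

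The main obstacle is the upper bound: the positive-product approximation only controls $\phi_{\epsilon}^{*}\beta$ up to an error of size $\epsilon$ in $N_{n-1}(Y_\epsilon)$, and the delicate point is to repackage this error as an honest ample divisor whose top self-intersection still tends to $\vol(L)$. Enlarging $A_{\epsilon}$ by $\epsilon^{1/(n-1)}\phi_{\epsilon}^{*}H$ rather than by $\epsilon$ is precisely what lets the $(n-1)$-st power dominate the error while keeping the volume increase negligible; the one place requiring genuine care is verifying that the mixed intersection terms remain uniformly bounded as the birational model $Y_{\epsilon}$ varies, which is handled by the comparison $A_\epsilon \preceq c\,\phi_\epsilon^* H$ with $c$ fixed.
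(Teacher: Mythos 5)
Your argument is correct, and its overall shape --- sandwiching $\phi_{\epsilon}^{*}\beta$ between two complete intersection classes on a Fujita approximation and applying Conjecture \ref{conj mobandciconj} to both sides --- is the same as the paper's; the lower bound is essentially identical. Where you genuinely diverge is in how the \emph{upper} complete intersection class is produced. The paper invokes \cite[Proposition 6.24]{fl14}, which gives a divisor-level sandwich $A_{\epsilon} \leq P_{\sigma}(\phi_{\epsilon}^{*}L) \leq A_{\epsilon} + \epsilon \phi_{\epsilon}^{*}G$ for a fixed ample $G$ on $X$; monotonicity of positive products then yields $\phi_{\epsilon}^{*}\beta \preceq (A_{\epsilon} + \epsilon \phi_{\epsilon}^{*}G)^{n-1}$ at once, and the upper bound closes via $\mob\bigl((A_{\epsilon}+\epsilon\phi_{\epsilon}^{*}G)^{n-1}\bigr) = \vol(A_{\epsilon}+\epsilon\phi_{\epsilon}^{*}G) \leq \vol(L+\epsilon G) \to \vol(L)$. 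You instead use only the curve-class-level approximation from \cite[Theorem 6.22]{fl14} (the one already used in the proof of Proposition \ref{prop teissier movable}), so the error appears as $\epsilon(\phi_{\epsilon}^{*}H)^{n-1}$ rather than as an ample correction to the divisor, and you must re-absorb it into an ample class by passing to $A_{\epsilon}+\epsilon^{1/(n-1)}\phi_{\epsilon}^{*}H$. That step is fine: all terms of the binomial expansion are products of nef classes, so the $(n-1)$-st power dominates $A_{\epsilon}^{n-1}+\epsilon(\phi_{\epsilon}^{*}H)^{n-1}$, and your uniform bound on the mixed terms via $A_{\epsilon} \preceq c\,\phi_{\epsilon}^{*}H$ (with $c$ fixed on $X$) is exactly what is needed to keep the estimate independent of the varying model $Y_{\epsilon}$. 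The trade-off is that the paper's route is shorter because the divisor-level statement does the absorption for you, while yours avoids relying on \cite[Proposition 6.24]{fl14} (and on replacing $L$ by an effective representative) at the cost of the $\epsilon^{1/(n-1)}$ bookkeeping.
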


By Theorem \ref{thm mov-intersection after}, we may suppose that there is a big divisor $L$ such that $\beta = \langle L^{n-1} \rangle$.
Without loss of generality we may assume that $L$ is effective.  Fix an ample effective divisor $G$ as in\cite[Proposition 6.24]{fl14}; the proposition shows that for any sufficiently small $\epsilon$ there is a birational morphism $\phi_{\epsilon}: Y_{\epsilon} \to X$ and a big and nef divisor $A_{\epsilon}$ on $Y_{\epsilon}$ satisfying
\begin{equation*}
A_{\epsilon} \leq P_{\sigma}(\phi_{\epsilon}^{*}L) \leq A_{\epsilon} + \epsilon \phi_{\epsilon}^{*}G.
\end{equation*}
Note that $\vol(A_{\epsilon}) \leq \vol(L) \leq \vol(A_{\epsilon} + \epsilon \phi_{\epsilon}^{*}G)$.  Furthermore, we have
$$\vol(A_{\epsilon} + \epsilon \phi_{\epsilon}^{*}G) \leq \vol(\phi_{\epsilon *}A_{\epsilon} + \epsilon G) \leq \vol(L + \epsilon G).$$
Applying \cite[Lemma 6.21]{fl14} and the invariance of the positive product under passing to positive parts, we have
\begin{equation*}
A_{\epsilon}^{n-1} \preceq \phi_{\epsilon}^{*}\beta \preceq (A_{\epsilon} + \epsilon \phi_{\epsilon}^{*}G)^{n-1}.
\end{equation*}
Applying Conjecture \ref{conj mobandciconj} (which is only stated for ample divisors but applies to big and nef divisors by continuity of $\mob$), we find
\begin{equation*}
\vol(A_{\epsilon}) = \mob(A_{\epsilon}^{n-1}) \leq \mob(\phi_{\epsilon}^{*}\beta) \leq \mob((A_{\epsilon} + \epsilon \phi_{\epsilon}^*(G))^{n-1}) = \vol(A_{\epsilon} + \epsilon \phi_{\epsilon}^{*}G).
\end{equation*}
As $\epsilon$ shrinks the two outer terms approach $\vol(L) = \mathfrak{M}(\beta)$.

\begin{claim}
Assume Conjecture \ref{conj mobandciconj}.  If a big movable curve class $\beta$ satisfies $\mob(\beta) = \mob(\phi^{*}\beta)$ for every birational $\phi$ then we must have $\beta \in \CI_{1}(X)$.
\end{claim}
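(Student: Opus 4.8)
The plan is to reduce everything to the single equality $\widehat{\vol}(\beta) = \mathfrak{M}(\beta)$ and then invoke Theorem \ref{thm solution to xiao}, which says precisely that a big and movable class lies in $\CI_1(X)$ exactly when this equality holds. I would split the argument according to whether $\mathfrak{M}(\beta)$ is positive or zero; the bulk of the work is to show that the hypothesis $\mob(\beta) = \mob(\phi^*\beta)$ forces $\mathfrak{M}(\beta) > 0$ and simultaneously pins down the value of $\mob(\beta)$.

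First suppose $\mathfrak{M}(\beta) > 0$. The previous claim provides, for each $\epsilon > 0$, a birational map $\phi_\epsilon \colon Y_\epsilon \to X$ with $\mathfrak{M}(\beta) - \epsilon \le \mob(\phi_\epsilon^* \beta) \le \mathfrak{M}(\beta) + \epsilon$. Feeding in the hypothesis $\mob(\phi_\epsilon^*\beta) = \mob(\beta)$ and letting $\epsilon \to 0$ yields $\mob(\beta) = \mathfrak{M}(\beta)$. I would then squeeze $\widehat{\vol}(\beta)$ between the two: part (1) of Theorem \ref{volversusmob2} gives $\widehat{\vol}(\beta) \le \mob(\beta) = \mathfrak{M}(\beta)$, while the universal inequality $\widehat{\vol} \ge \mathfrak{M}$ on $\Mov_1(X)$ supplies the reverse. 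Hence $\widehat{\vol}(\beta) = \mathfrak{M}(\beta)$, and Theorem \ref{thm solution to xiao} gives $\beta \in \CI_1(X)$.

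The remaining, and genuinely delicate, point is to rule out $\mathfrak{M}(\beta) = 0$; note a big class with $\mathfrak{M}(\beta) = 0$ can never lie in $\CI_1(X)$ (there one would have $\widehat{\vol}(\beta) > 0 = \mathfrak{M}(\beta)$), so the hypothesis must be shown to be incompatible with $\mathfrak{M}(\beta) = 0$. Here I would mimic the perturbation argument of Proposition \ref{prop hatvol birational}, replacing $\widehat{\vol}$ by $\mob$. Fix an interior class $\xi$ of $\Mov_1(X)$, so that $\beta + \delta \xi$ is interior and hence $\mathfrak{M}(\beta + \delta\xi) > 0$ by Lemma \ref{lem computing m}. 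Applying the previous claim to $\beta + \delta\xi$ produces $\phi_{\epsilon,\delta}$ with $\mob(\phi_{\epsilon,\delta}^*(\beta + \delta\xi)) \le \mathfrak{M}(\beta + \delta\xi) + \epsilon$. Since the pullback of the nef class $\delta\xi$ is pseudo-effective and $\mob$ is monotone under adding pseudo-effective curve classes, $\mob(\phi_{\epsilon,\delta}^*\beta) \le \mob(\phi_{\epsilon,\delta}^*(\beta + \delta\xi))$; combined with the hypothesis this gives $\mob(\beta) \le \mathfrak{M}(\beta + \delta\xi) + \epsilon$. Letting $\epsilon \to 0$ and then $\delta \to 0$, continuity of $\mathfrak{M}$ forces $\mob(\beta) \le \mathfrak{M}(\beta) = 0$, contradicting $\mob(\beta) \ge \widehat{\vol}(\beta) > 0$ from part (1) and the bigness of $\beta$.

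I expect this last case to be the main obstacle: it relies on the monotonicity of $\mob$ under pseudo-effective perturbation and on the good behaviour of the pullback of movable curve classes (so that $\phi^*\xi$ remains pseudo-effective), together with the continuity of $\mathfrak{M}$ up to the boundary of $\Mov_1(X)$. The positive-$\mathfrak{M}$ case, by contrast, is a direct book-keeping combination of the previous claim, part (1), and Theorem \ref{thm solution to xiao}.
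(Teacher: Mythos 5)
Your proposal is correct and follows essentially the same route as the paper: in the case $\mathfrak{M}(\beta)>0$ both arguments use the previous claim plus the hypothesis to get $\mob(\beta)=\mathfrak{M}(\beta)$ and then squeeze against $\widehat{\vol}$ via Theorem \ref{volversusmob2}.(1) before invoking Theorem \ref{thm solution to xiao}, and in the case $\mathfrak{M}(\beta)=0$ both perturb by an interior class $\delta\xi$, apply the previous claim to $\beta+\delta\xi$, and use monotonicity of $\mob$ under the pseudo-effective class $\phi^{*}(\delta\xi)$ to force $\mob(\beta)=0$, contradicting bigness. Your write-up is slightly more explicit about why $\mathfrak{M}(\beta+\delta\xi)>0$ and why $\mob(\beta)>0$, but the substance is identical.
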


When $\mathfrak{M}(\beta) > 0$, by the previous claim we see from taking a limit that $\mob(\beta) = \mathfrak{M}(\beta)$.  By Theorem \ref{volversusmob2}.(1) and Theorem \ref{thm solution to xiao} we get
\begin{equation*}
\widehat{\vol}(\beta) \leq \mathfrak{M}(\beta) \leq \widehat{\vol}(\beta)
\end{equation*}
and Theorem \ref{thm solution to xiao} implies the result.  When $\mathfrak{M}(\beta) = 0$, fix a class $\xi$ in the interior of the movable cone and consider $\beta + \delta \xi$ for $\delta > 0$.  By the previous claim, for any $\epsilon > 0$ we can find a sufficiently small $\delta$ and a birational map $\phi_{\epsilon}: Y_{\epsilon} \to X$ such that $\mob(\phi_{\epsilon}^{*}(\beta + \delta \xi)) < \epsilon$.  We also have $\mob(\phi_{\epsilon}^{*}\beta) \leq \mob(\phi_{\epsilon}^{*}(\beta + \delta \xi))$ since the pullback of the nef curve class $\delta \xi$ is pseudo-effective.  By the assumption on the birational invariance of $\mob(\beta)$, we can take a limit to obtain $\mob(\beta)=0$, a contradiction to the bigness of $\beta$.

To finish the proof, recall that Lemma \ref{lemma fl pull back mob} implies that the mobility of $\alpha$ must coincide with the mobility of a movable class $\beta$ lying below $\alpha$ and satisfying $\mob(\pi^{*}\beta) = \mob(\beta)$ for any birational map $\pi$.  Thus we have shown
\begin{equation*}
\mob(\alpha) = \sup_{B \textrm{ nef, } \alpha \succeq B^{n-1}} \mob(B^{n-1}).
\end{equation*}
By Conjecture \ref{conj mobandciconj} again, we obtain
\begin{equation*}
\mob(\alpha) = \sup_{B \textrm{ nef, } \alpha \succeq B^{n-1}} B^{n}.
\end{equation*}
But the right hand side agrees with $\widehat{\vol}(\alpha)$ by Theorem \ref{thm curve decomposition}. This proves the equality $\mob(\alpha)=\widehat{\vol}(\alpha)$ under the Conjecture \ref{conj mobandciconj}.

(3) We now prove the equality $\widehat{\vol}=\wmob$. The key advantage is that the analogue of Conjecture \ref{conj mobandciconj} is known for the weighted mobility: \cite[Example 8.22]{lehmann14} shows that for any big and nef divisor $B$ we have $\wmob(B^{n-1}) = B^{n}$.

We first prove the inequality $\widehat{\vol} \geq \wmob$.  The argument is essentially identical to the upper bound in Theorem \ref{volversusmob2}.(1): by continuity and homogeneity it suffices to prove it for classes in $N_{1}(X)_{\mathbb{Z}}$.  Choose a positive integer $\mu$ and a family of curves of class $\mu m \alpha$ achieving $\wmc(m\alpha)$.  By splitting up into components and applying Proposition \ref{multiplicityestimate} with equal weight $\mu$ at every point we see that for any component $U_{i}$ with class $\beta_{i}$ we have
\begin{equation*}
\widehat{\vol}(\beta_{i}) \geq \mu^{n/n-1} (\wmc(U_{i}) - 1)
\end{equation*}
Arguing as in Theorem \ref{volversusmob2}.(1), we see that for any fixed ample Cartier divisor $A$ we have
\begin{equation*}
\widehat{\vol}(m\mu \alpha) \geq \mu^{n/n-1} (\wmc(m\alpha) - mA \cdot \alpha).
\end{equation*}
Rescaling by $\mu$ and taking a limit proves the statement.

We next prove the inequality $\widehat{\vol} \leq \wmob$.  Again, the argument is identical to the lower bound in Theorem \ref{volversusmob2}.(1).  It is clear that the weighted mobility can only increase upon adding an effective class.  Using continuity and homogeneity, the same is true for any pseudo-effective class.  Thus we have
\begin{align*}
\wmob(\alpha) & \geq \sup_{B \textrm{ nef, }
\alpha \succeq B^{n-1}} \wmob(B^{n-1})  \\
& = \sup_{B \textrm{ nef, }
\alpha \succeq B^{n-1}} B^n \\
& = \widehat{\vol}(\alpha).
\end{align*}
where the second equality follows from \cite[Example 8.22]{lehmann14}. This finishes the proof of the equality $\widehat{\vol}=\wmob$.
\end{proof}

\begin{rmk}
We expect Theorem \ref{volversusmob2} to also hold over any algebraically closed field, but we have not thoroughly checked the results on asymptotic multiplier ideals used in the proof of \cite[Proposition 6.24]{fl14}.
\end{rmk}

Theorem \ref{volversusmob2} yields two interesting consequences:
\begin{itemize}
\item The theorem indicates (loosely speaking) that if the mobility count of complete intersection classes is optimized by complete intersection curves, then the mobility count of \emph{any} curve class is optimized by complete intersection curves lying below the class.

This result is very surprising: it indicates that the ``positivity'' of a curve class is coming from ample divisors in a strong sense.  For example, suppose that $X$ and $X'$ are isomorphic in codimension $1$.  If we take a complete intersection class $\alpha$ on $X$, we expect that complete intersections of divisors maximize the mobility count.  However, the strict transform of these curves on $X'$ should not maximize mobility count.  Instead, if we deform these curves so that they break off a piece contained in the exceptional locus, the part left over deforms more than the original.

\item The theorem suggests that the Zariski decomposition constructed in \cite{fl14} for curves is not optimal: instead of defining a positive part in the movable cone, if Conjecture \ref{conj mobandciconj} is true we should instead define a positive part in the complete intersection cone. It would be interesting to see an analogous improvement for higher dimension cycles.
\end{itemize}

\section{$\mathfrak{M}$ and asymptotic point counts}

Finally, we show that $\mathfrak{M}$ can be given an enumerative interpretation.

\begin{defn}
Let $p: U \to W$ be a family of curves on $X$ with morphism $s: U \to X$.  We say that $U$ is strictly movable if:
\begin{enumerate}
\item For each component $U_{i}$ of $U$, the morphism $s|_{U_{i}}$ is dominant.
\item For each component $U_{i}$ of $U$, the morphism $p|_{U_{i}}$ has generically irreducible fibers.
\end{enumerate}
\end{defn}

We then define $\mob_{mov}$ and $\wmob_{mov}$ exactly analogously to $\mob$ and $\wmob$, except that we only allow contributions of strictly movable families of curves.  Note that $\mob_{mov}$ and $\wmob_{mov}$ vanish outside of $\Mov_{1}(X)$ since these classes are not represented by a sum of irreducible curves which deform to  dominate $X$.  Arguing just as in \cite[Section 5]{lehmann14}, one sees that $\mob_{mov}$ and $\wmob_{mov}$ are homogeneous of weight $n/n-1$, and are continuous in the interior of $\Mov_{1}(X)$.

\begin{lem} \label{birneglem}
Let $\phi: Y \to X$ be a birational morphism of smooth projective varieties.  Let $p: U \to W$ be a family of irreducible curves admitting a dominant map $s: U \to X$.  Let $U_{Y}$ be the family of curves defined by strict transforms.  Letting $\alpha, \alpha_{Y}$ denote respectively the classes of the families on $X,Y$, we have that $\phi^{*}\alpha - \alpha_{Y}$ is the class of an effective $\mathbb{R}$-curve.
\end{lem}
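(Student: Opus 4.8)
The plan is to prove the sharper cycle-level statement: for a general member $C$ of the family, the class $\phi^{*}\alpha$ is represented by an effective $1$-cycle whose unique non-contracted component is the strict transform $\widetilde{C}$, occurring with multiplicity one, so that $\phi^{*}\alpha-\alpha_{Y}$ is represented by an effective cycle contracted by $\phi$.

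First I would fix notation and record the easy numerical constraints. Let $C$ be a general member of $p\colon U\to W$; it is an irreducible curve with $[C]=\alpha$, and since $s$ is dominant we may assume $C\not\subseteq\phi(\mathrm{Exc}(\phi))$, so that $C$ meets $\phi(\mathrm{Exc}(\phi))$ in a finite set $\{q_{j}\}$. Its strict transform $\widetilde{C}$ has class $\alpha_{Y}$, is not contracted, and $\phi|_{\widetilde{C}}\colon\widetilde{C}\to C$ is birational, whence $\phi_{*}\alpha_{Y}=\alpha$. Using the defining property $\phi^{*}\alpha\cdot D=\alpha\cdot\phi_{*}D$ of the pullback on curve classes together with $\phi_{*}\phi^{*}=\mathrm{id}$ on $N^{1}(X)$, one gets $\phi_{*}\phi^{*}\alpha=\alpha$, so $\delta:=\phi^{*}\alpha-\alpha_{Y}$ satisfies $\phi_{*}\delta=0$; moreover, for any exceptional prime divisor $E_{i}$ we have $\phi^{*}\alpha\cdot E_{i}=\alpha\cdot\phi_{*}E_{i}=0$, hence $\delta\cdot E_{i}=-\widetilde{C}\cdot E_{i}\le 0$ because $\widetilde{C}$ is an irreducible curve not contained in $E_{i}$. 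This pins $\delta$ down as an exceptional class with the expected intersection signs, but does not by itself yield effectivity.

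The substance of the proof is therefore to produce the effective representative, for which I would compute $\phi^{*}\alpha$ as the refined (lci) pullback of the cycle $[C]$ along the morphism $\phi$ of smooth varieties. Over the open locus $V\subseteq X$ where $\phi$ is an isomorphism this pullback agrees with $\widetilde{C}$, and since $\phi$ has degree one the component $\widetilde{C}$ appears with multiplicity exactly one. The remaining contributions are localized over the finite set $\{q_{j}\}=C\cap\phi(\mathrm{Exc}(\phi))$, lie in the fibers $\phi^{-1}(q_{j})$, and are given by the local intersection multiplicities of the refined pullback, which are non-negative. Thus $\phi^{*}\alpha=[\widetilde{C}]+[Z]$ with $Z$ an effective $1$-cycle supported on $\mathrm{Exc}(\phi)$, and passing to numerical classes gives $\phi^{*}\alpha-\alpha_{Y}=[Z]$, the class of an effective $\mathbb{R}$-curve.

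The main obstacle is precisely this last effectivity claim: numerically $\delta$ is determined only up to the span of contracted classes, and a contracted class with $\delta\cdot E_{i}\le 0$ for all $i$ need not even be pseudo-effective, so one genuinely needs the geometric input that no cancellation occurs in the total transform. I would secure this from the fact that $\phi$ is birational and an isomorphism over $V$, so that the excess contributions arise solely from the fibers over the $q_{j}$ and enter with non-negative multiplicity; heuristically, this is the statement that the flat limit of the strict transforms of center-avoiding representatives of $\alpha$ is $\widetilde{C}$ together with effective cycles swept into the exceptional fibers. Everything else in the argument is the bookkeeping of $\phi_{*}$ and of intersection against exceptional divisors recorded in the second paragraph, which serves as a consistency check on the sign of $[Z]$.
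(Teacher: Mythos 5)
There is a genuine gap at the heart of your argument: the claim that the excess contributions of the refined Gysin pullback over the points $q_j$ ``enter with non-negative multiplicity.'' This is exactly the hard point, and it is asserted rather than proved. The canonical decomposition of $\phi^![C]$ over a fiber component $W_j\subset\phi^{-1}(q_j)$ is of the form $\{c(N)\cap s(W_j,\,Y\times_X C)\}_1$, and neither Segre classes nor such capped classes are effective in general; the positivity theorems for canonical decompositions (Fulton, Chapter 12) require special hypotheses such as a globally generated tangent bundle and fail on arbitrary smooth varieties. Your fallback heuristic --- that $\widetilde{C}$ plus an effective exceptional cycle arises as a flat limit of strict transforms of center-avoiding representatives of $\alpha$ --- does not repair this: $\alpha$ is the class of the single curve $C$, a general member of a dominating family need not avoid the codimension-$\geq 2$ centers of $\phi$, and in any case you would still have to identify that flat limit with the numerical pullback $\phi^*\alpha$. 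So the cycle-level statement you aim for (which is strictly stronger than the numerical statement the lemma asserts) is left unproved at its decisive step.

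Ironically, the data you record in your second paragraph as a mere ``consistency check'' is all the paper needs, and your remark that it ``does not by itself yield effectivity'' is where you part ways with the correct argument. The paper argues as in the negativity of contraction lemma: one chooses a basis $\{e_i\}$ of $\ker(\phi_*\colon N_1(Y)\to N_1(X))$ consisting of effective curves and a basis $\{f_j\}$ of $\ker(\phi_*\colon N^1(Y)\to N^1(X))$ consisting of effective divisors so that the intersection matrix $(e_i\cdot f_j)$ is negative definite with all off-diagonal entries non-negative. Writing $\delta=\phi^*\alpha-\alpha_Y=\sum c_ie_i$ (legitimate since $\phi_*\delta=0$), one has $\delta\cdot f_j=-\alpha_Y\cdot f_j\leq 0$ for every $j$, because $\alpha_Y$ is the class of a family of irreducible curves dominating $Y$ and hence meets every effective divisor non-negatively; the standard linear algebra of the negativity lemma (as in \cite[Lemma 4.1]{bck12}) then forces every $c_i\geq 0$, i.e.\ $\delta$ is an effective $\mathbb{R}$-curve class. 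You are right that ``$\delta\cdot E_i\leq 0$ for all exceptional primes'' alone does not imply pseudo-effectivity of an arbitrary contracted class; the missing ingredient is the carefully chosen pair of effective bases with the negative definite pairing, not a cycle-theoretic computation of the total transform.
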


\begin{proof}
Since $\alpha_{Y}$ is the class of a family of irreducible curves which dominates $Y$, it has non-negative intersection against every effective divisor.  Arguing as in the negativity of contraction lemma, we can find a basis $\{ e_{i} \}$ of $\ker(\phi_{*}:N_{1}(Y) \to N_{1}(X))$ consisting of effective curves and a basis $\{ f_{j} \}$ of $\ker(\phi_{*}: N^{1}(Y) \to N^{1}(X))$ consisting of effective divisors such that the intersection matrix is negative definite and the only negative entries are on the diagonal.  Just as in \cite[Lemma 4.1]{bck12}, this shows that
\begin{equation*}
\alpha_{Y} = \phi^{*}\phi_{*}\alpha_{Y} - \beta = \phi^{*}\alpha - \beta
\end{equation*}
for some effective curve class $\beta$ supported on the exceptional divisors.
\end{proof}

\begin{thrm} \label{movversion}
Let $X$ be a smooth projective variety of dimension $n$ and let $\alpha \in \Mov_{1}(X)^{\circ}$.  Then:
\begin{enumerate}
\item $\mathfrak{M}(\alpha) = \wmob_{mov}(\alpha)$.
\item Assume Conjecture \ref{conj mobandciconj}.  Then $\mathfrak{M}(\alpha) = \mob_{mov}(\alpha)$.
\end{enumerate}
\end{thrm}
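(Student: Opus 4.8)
The plan is to establish each equality by two opposite inequalities, mirroring closely the proof of Theorem \ref{volversusmob2}. Throughout I use that $\alpha \in \Mov_1(X)^{\circ}$ forces $\mathfrak{M}(\alpha) > 0$: since $\Mov_1(X) = \Eff^1(X)^{*}$, an interior curve class pairs strictly positively with every nonzero pseudo-effective (in particular movable) divisor, so Lemma \ref{lem computing m} gives $\mathfrak{M}(\alpha) > 0$, and Theorem \ref{thm mov-intersection after} then produces a unique big movable divisor $L_\alpha$ with $\alpha = \langle L_\alpha^{n-1} \rangle$ and $\mathfrak{M}(\alpha) = \vol(L_\alpha)$. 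This lets me freely pass between $\alpha$ and $L_\alpha$.

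The inequalities $\mathfrak{M}(\alpha) \geq \wmob_{mov}(\alpha)$, and (with a lossy extra factor of $n!$) the analogous statement for $\mob_{mov}$, I would obtain exactly as the upper bounds in Theorem \ref{volversusmob2}. Take a strictly movable family representing $m\alpha$ (carrying weight $\mu$ in the weighted case) that realizes the relevant count, decompose a general member into irreducible components, and observe that each component family dominates $X$, so its class $\beta_i$ is movable. Proposition \ref{multiplicityestimate} bounds each $\mathfrak{M}(\beta_i)$ from below by the component's point-count, and the superadditivity of $\mathfrak{M}$ (a consequence of the concavity of Theorem \ref{thm strict concavity M hatvol}, since $t \mapsto t^{n/(n-1)}$ is superadditive) gives $\mathfrak{M}(m\alpha) \geq \sum_i \mathfrak{M}(\beta_i)$. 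As the number of components is at most $mA \cdot \alpha$ for a fixed ample $A$, dividing by $m^{n/(n-1)}$ and passing to the limit yields $\mathfrak{M}(\alpha) \geq \wmob_{mov}(\alpha)$. The weighted count carries no $n!$ in its normalization, so the $\mu^{n/(n-1)}$ cancels and this bound is \emph{sharp}; for the ordinary count the same computation only gives $\mathfrak{M}(\alpha) \geq \mob_{mov}(\alpha)/n!$, so the sharp comparison with $\mob_{mov}$ must be obtained differently.

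For the reverse inequalities I would combine Fujita approximation with the complete intersection conjecture. Fix an ample $G$, and for each $\epsilon$ choose a birational $\phi_\epsilon : Y_\epsilon \to X$ and an ample $A_\epsilon$ on $Y_\epsilon$ with $A_\epsilon^{n-1} \preceq \phi_\epsilon^{*}\alpha \preceq (A_\epsilon + \epsilon \phi_\epsilon^{*}G)^{n-1}$ and $\vol(A_\epsilon) \to \vol(L_\alpha) = \mathfrak{M}(\alpha)$, exactly as in the first Claim of the proof of Theorem \ref{volversusmob2}. The complete intersection family of $A_\epsilon$ on $Y_\epsilon$ is strictly movable; pushing it forward produces a strictly movable family on $X$ whose class $\gamma_\epsilon := \phi_{\epsilon *}A_\epsilon^{n-1}$ satisfies $\gamma_\epsilon \preceq \alpha$ and converges to $\alpha$ as $\epsilon \to 0$. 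Since general points of $X$ lift to general points of $Y_\epsilon$ away from the exceptional locus, the push-forward preserves point-counts, so $\wmob_{mov}(\gamma_\epsilon) \geq \wmob(A_\epsilon^{n-1}) = \vol(A_\epsilon)$ by \cite[Example 8.22]{lehmann14}, and likewise $\mob_{mov}(\gamma_\epsilon) \geq \mob(A_\epsilon^{n-1}) = \vol(A_\epsilon)$ under Conjecture \ref{conj mobandciconj}. As $\alpha$ is interior and $\gamma_\epsilon \to \alpha$, the continuity of $\wmob_{mov}$ and $\mob_{mov}$ in the interior of $\Mov_1(X)$ gives $\wmob_{mov}(\alpha) \geq \mathfrak{M}(\alpha)$ and $\mob_{mov}(\alpha) \geq \mathfrak{M}(\alpha)$. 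Together with the previous paragraph this already settles Part (1).

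The remaining and genuinely delicate point is the sharp upper bound $\mob_{mov}(\alpha) \leq \mathfrak{M}(\alpha)$ of Part (2), where the naive decomposition loses the factor $n!$. Here I would use Lemma \ref{birneglem} to trade the movable count on $X$ for an ordinary count on a model. Given a strictly movable family on $X$ realizing $\mc_{mov}(m\alpha)$, its strict transform to $Y_\epsilon$ is again strictly movable, preserves the point-count, and has class $\alpha_{Y_\epsilon}$ with $m\phi_\epsilon^{*}\alpha - \alpha_{Y_\epsilon}$ effective; in particular $\alpha_{Y_\epsilon} \preceq m(A_\epsilon + \epsilon \phi_\epsilon^{*}G)^{n-1}$. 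Regarding this transform merely as a family for the \emph{ordinary} mobility on $Y_\epsilon$ and invoking the monotonicity of $\mob$ under addition of pseudo-effective classes (as in \cite{lehmann14}), I obtain $\mc_{mov}(m\alpha) \leq \mc^{Y_\epsilon}\!\big(m(A_\epsilon + \epsilon \phi_\epsilon^{*}G)^{n-1}\big)$, and hence
\[
\mob_{mov}(\alpha) \leq \mob^{Y_\epsilon}\big((A_\epsilon + \epsilon \phi_\epsilon^{*}G)^{n-1}\big) = \vol(A_\epsilon + \epsilon \phi_\epsilon^{*}G),
\]
the last equality being Conjecture \ref{conj mobandciconj} applied to the ample class $A_\epsilon + \epsilon \phi_\epsilon^{*}G$. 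Letting $\epsilon \to 0$ sends the right-hand side to $\mathfrak{M}(\alpha)$, giving the bound with no $n!$ loss. The subtle steps requiring care are that strict transforms genuinely preserve the mobility count (needing that general point-configurations avoid the exceptional locus), that the passage from $\mob_{mov}$ on $X$ to $\mob$ on $Y_\epsilon$ is carried out at the level of $\mc$ for each fixed $m$ before the limsup (so that the $m$-dependence of $\alpha_{Y_\epsilon}$ is absorbed through ordinary-mobility monotonicity), and that Conjecture \ref{conj mobandciconj} is legitimately applied to the nef class on the model. I expect this reduction — replacing the movable count on $X$ by the ordinary count of a complete intersection on a birational model, precisely so that the conjecture applies without the combinatorial $n!$ loss — to be the main obstacle.
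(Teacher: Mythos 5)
Your overall strategy is essentially the paper's: both directions of (1) via Proposition \ref{multiplicityestimate} plus pushforward of complete intersection families from Fujita models, and (2) via strict transforms, Lemma \ref{birneglem}, and Conjecture \ref{conj mobandciconj}. Two deviations are worth noting. First, in the upper bound of (1) you work directly on $X$, decomposing into movable components and using superadditivity of $\mathfrak{M}$ on $\Mov_1(X)$ (which holds since $\mathfrak{M}^{(n-1)/n}$ is an infimum of linear functionals), whereas the paper passes to strict transforms on a birational model; your version is if anything cleaner and the superadditivity you invoke is available without restricting to $\Mov_1(X)_{\mathfrak{M}}$. Second, in the upper bound of (2) you apply the conjecture directly to the complete intersection class $(A_\epsilon + \epsilon\phi_\epsilon^*G)^{n-1}$ on the Fujita model, whereas the paper first proves $\mob_{mov}(\alpha) \leq \mob(\phi^*\alpha)$ for every birational $\phi$ and then concludes from Theorem \ref{volversusmob2}.(2) together with the models produced in Proposition \ref{prop hatvol birational}; the two routes are equivalent in substance.

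The one step that does not work as written is the inequality $\mc_{mov}(m\alpha) \leq \mc^{Y_\epsilon}\bigl(m(A_\epsilon + \epsilon\phi_\epsilon^*G)^{n-1}\bigr)$ justified by ``ordinary-mobility monotonicity'' at the level of $\mc$ for fixed $m$. The mobility count is only defined for integral classes, and $(A_\epsilon+\epsilon\phi_\epsilon^*G)^{n-1}$ is a real class; more seriously, monotonicity of $\mc$ holds by adding a fixed effective curve to each member of a family, so it applies to \emph{effective integral} differences, while here the difference $m(A_\epsilon+\epsilon\phi_\epsilon^*G)^{n-1} - \alpha_{Y_\epsilon,m}$ is only pseudo-effective, and the effective error $m\phi_\epsilon^*\alpha - \alpha_{Y_\epsilon,m}$ supplied by Lemma \ref{birneglem} is an $\mathbb{R}$-class depending on $m$. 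Passing to the limit functions restores monotonicity under pseudo-effective differences by continuity, but only after one controls the $m$-dependent errors uniformly; this is exactly what the paper's invocation of the cone-relative count $\mc_{\mathcal{K}}$ of \cite[Definition 4.8]{fl14} (with $\mathcal{K}$ generated by a fixed effective basis of $\ker\phi_*$) accomplishes, yielding $\mob_{mov}(\alpha) \leq \mob(\phi^*\alpha)$, after which the conjecture enters through Theorem \ref{volversusmob2}.(2). You correctly flag this as the main obstacle, but the fix is not the monotonicity statement you cite; it is the $\mc_{\mathcal{K}}$ formalism (or an equivalent uniformization of the exceptional error terms before taking the limsup).
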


\begin{proof}
(1) Suppose that $\phi: Y \to X$ is a birational model of $X$ and that $A$ is an ample Cartier divisor on $X$.  By pushing-forward complete intersection families, we see that $\wmob_{mov}(\phi_{*}A^{n-1}) \geq A^{n}$.  By continuity we obtain the inequality $\mathfrak{M}(\alpha) \leq \wmob_{mov}(\alpha)$ for any $\alpha \in \Mov_{1}(X)^{\circ}$.

To see the reverse inequality, by continuity and homogeneity it suffices to consider the case when $\alpha \in \Mov_{1}(X)_{\mathbb{Z}}^{\circ}$.  Choose a positive integer $\mu$ and a strictly movable family of curves $U$ of class $\mu m \alpha$ achieving $\wmc_{mov}(m\alpha)$.  Let $\phi: Y \to X$ be a birational model and let $U_{Y}$ denote the strict transform class on $Y$ with numerical class $\alpha'$.  By arguing as in the proof of Theorem \ref{volversusmob2}, we find that
\begin{equation*}
\mathfrak{M}(\alpha') \geq \mu^{n/n-1}(\wmc_{mov}(m\alpha) - mA \cdot \alpha).
\end{equation*}
Furthermore by Lemma \ref{birneglem} we have $\widehat{\vol}(m\mu \phi^{*}\alpha) \geq \widehat{\vol}(\alpha')$.  Dividing by $m^{n/n-1}$ and taking a limit as $m$ increases, we see that $\mathfrak{M}(\alpha) \geq \wmob_{mov}(\alpha)$.

(2) The proof of $\mathfrak{M}(\alpha) \leq \mob_{mov}(\alpha)$ is the same as in (1).
Conversely, suppose that $U$ is a strictly movable family of curves achieving $\mc_{mov}(m\alpha)$.  Let $\phi: Y \to X$ be a birational morphism of smooth varieties; by combining Lemma \ref{birneglem} with \cite[Section 4]{fl14}, we see that $\mc_{mov}(m\alpha) \leq \mc_{\mathcal{K}}(m\phi^{*}\alpha)$, where $\mathcal{K}$ is a cone chosen as in \cite[Definition 4.8]{fl14} and includes a fixed effective basis of the kernel of $\phi_{*}: N_{1}(Y) \to N_{1}(X)$ chosen as in Lemma \ref{birneglem}.  Taking limits, we see that $\mob_{mov}(\alpha) \leq \mob(\phi^{*}\alpha)$ for any birational map $\phi$.

Choose a sequence of birational maps $\phi_{i}: Y_{i} \to X$ as in the proof of Proposition \ref{prop hatvol birational} so that $\widehat{\vol}(\phi_{i}^{*}\alpha)$ limits to $\mathfrak{M}(\alpha)$.  By taking a limit over $i$ and applying Theorem \ref{volversusmob2}.(2) we finish the proof.
\end{proof}

\bibliography{poscurves}

\newcommand{\etalchar}[1]{$^{#1}$}
\providecommand{\bysame}{\leavevmode\hbox to3em{\hrulefill}\thinspace}
\providecommand{\MR}{\relax\ifhmode\unskip\space\fi MR }
\providecommand{\MRhref}[2]{%
  \href{http://www.ams.org/mathscinet-getitem?mr=#1}{#2}
}
\providecommand{\href}[2]{#2}
\begin{thebibliography}{DDG{\etalchar{+}}14}

\bibitem[BCK12]{bck12}
Thomas Bauer, Mirel Caib{\u{a}}r, and Gary Kennedy, \emph{Zariski
  decomposition: a new (old) chapter of linear algebra}, Amer. Math. Monthly
  \textbf{119} (2012), no.~1, 25--41. \MR{2877664 (2012m:15001)}

\bibitem[BDPP13]{BDPP13}
S{\'e}bastien Boucksom, Jean-Pierre Demailly, Mihai P{\u{a}}un, and Thomas
  Peternell, \emph{The pseudo-effective cone of a compact {K}\"ahler manifold
  and varieties of negative {K}odaira dimension}, J. Algebraic Geom.
  \textbf{22} (2013), no.~2, 201--248.

\bibitem[BEGZ10]{BEGZ10MAbig}
S{\'e}bastien Boucksom, Philippe Eyssidieux, Vincent Guedj, and Ahmed Zeriahi,
  \emph{Monge-{A}mp\`ere equations in big cohomology classes}, Acta Math.
  \textbf{205} (2010), no.~2, 199--262.

\bibitem[BFJ09]{bfj09}
S{\'e}bastien Boucksom, Charles Favre, and Mattias Jonsson,
  \emph{Differentiability of volumes of divisors and a problem of {T}eissier},
  J. Algebraic Geom. \textbf{18} (2009), no.~2, 279--308.

\bibitem[Bou02]{Bou02}
S{\'e}bastien Boucksom, \emph{C{\^o}nes positifs des vari{\'e}t{\'e}s complexes
  compactes}, Ph.D. thesis, Universit{\'e} Joseph-Fourier-Grenoble I, 2002.

\bibitem[Bou04]{Bou04}
\bysame, \emph{Divisorial {Z}ariski decompositions on compact complex
  manifolds}, Ann. Sci. \'Ecole Norm. Sup. (4) \textbf{37} (2004), no.~1,
  45--76.

\bibitem[CHMS14]{chms14}
Paolo Cascini, Christopher Hacon, Mircea Musta{\c{t}}{\u{a}}, and Karl Schwede,
  \emph{On the numerical dimension of pseudo-effective divisors in positive
  characteristic}, Amer. J. Math. \textbf{136} (2014), no.~6, 1609--1628.

\bibitem[Cut13]{Cut13}
Steven~Dale Cutkosky, \emph{Teissier's problem on inequalities of nef divisors
  over an arbitrary field}, 2013, arXiv preprint arXiv:1304.1218.

\bibitem[DDG{\etalchar{+}}14]{demailly2014holder}
Jean-Pierre Demailly, S{\l}awomir Dinew, Vincent Guedj, Pham~Hoang Hiep,
  S{\l}awomir Ko{\l}odziej, and Ahmed Zeriahi, \emph{H{\"o}lder continuous
  solutions to {M}onge--{A}mp{\`e}re equations}, J. Eur. Math. Soc. \textbf{16}
  (2014), no.~4, 619--647.

\bibitem[Dem12]{Dem}
Jean-Pierre Demailly, \emph{Complex analytic and differential geometry. online
  book}, 2012, available at
  www-fourier.ujf-grenoble.fr/$\sim$demailly/manuscripts/agbook.pdf, Institut
  Fourier, Grenoble.

\bibitem[FL13]{fl14}
Mihai Fulger and Brian Lehmann, \emph{{Z}ariski decompositions of numerical
  cycle classes}, 2013, arXiv preprint arXiv:1310.0538, to appear in J.
  Algebraic Geom.

\bibitem[Ful84]{fulton84}
William Fulton, \emph{Intersection theory}, Ergebnisse der Mathematik und ihrer
  Grenzgebiete (3) [Results in Mathematics and Related Areas (3)], vol.~2,
  Springer-Verlag, Berlin, 1984.

\bibitem[Ful11]{fulger11}
Mihai Fulger, \emph{The cones of effective cycles on projective bundles over
  curves}, Mathematische Zeitschrift \textbf{269} (2011), no.~1, 449--459.

\bibitem[HK00]{hk00}
Yi~Hu and Sean Keel, \emph{Mori dream spaces and {GIT}}, Michigan Math. J.
  \textbf{48} (2000), 331--348, Dedicated to William Fulton on the occasion of
  his 60th birthday.

\bibitem[Keb13]{Keb13}
Stefan Kebekus, \emph{Uniruledness criteria and applications}, Birational
  geometry, rational curves, and arithmetic, Springer, New York, 2013,
  pp.~147--162.

\bibitem[Kla04]{klain04}
Daniel~A. Klain, \emph{The {M}inkowski problem for polytopes}, Adv. Math.
  \textbf{185} (2004), no.~2, 270--288.

\bibitem[Kol96]{k96}
J{\'a}nos Koll{\'a}r, \emph{Rational curves on algebraic varieties}, Ergebnisse
  der Mathematik und ihrer Grenzgebiete. 3. Folge. A Series of Modern Surveys
  in Mathematics [Results in Mathematics and Related Areas. 3rd Series. A
  Series of Modern Surveys in Mathematics], vol.~32, Springer-Verlag, Berlin,
  1996.

\bibitem[Leh13a]{lehmann13}
Brian Lehmann, \emph{Comparing numerical dimensions}, Algebra Number Theory
  \textbf{7} (2013), no.~5, 1065--1100.

\bibitem[Leh13b]{lehmann14}
\bysame, \emph{Geometric characterizations of big cycles}, 2013, arXiv preprint
  arXiv:1309.0880, see also ``\emph{Volume-type functions for numerical cycle
  classes}" on the author's homepage.

\bibitem[LM09]{lm09}
Robert Lazarsfeld and Mircea Musta{\c{t}}{\u{a}}, \emph{Convex bodies
  associated to linear series}, Ann. Sci. \'Ec. Norm. Sup\'er. (4) \textbf{42}
  (2009), no.~5, 783--835.

\bibitem[LX15]{lehmannxiao2015a}
Brian Lehmann and Jian Xiao, \emph{{C}onvexity and {Z}ariski decomposition
  structure}, 2015, preprint (half part of arXiv:1507.04316).

\bibitem[Mus13]{mustata11}
Mircea Musta{\c{t}}{\u{a}}, \emph{The non-nef locus in positive
  characteristic}, A celebration of algebraic geometry, Clay Math. Proc.,
  vol.~18, Amer. Math. Soc., Providence, RI, 2013, pp.~535--551.

\bibitem[Nak04]{Nak04}
Noboru Nakayama, \emph{Zariski-decomposition and abundance}, MSJ Memoirs,
  vol.~14, Mathematical Society of Japan, Tokyo, 2004.

\bibitem[Neu10]{Neu10}
S\'{e}bastian Neumann, \emph{A decomposition of the {M}oving cone of a
  projective manifold according to the {H}arder-{N}arasimhan filtration of the
  tangent bundle}, Ph.D. thesis, Mathematisches Institut der Universit{\"a}t
  Freiburg, 2010.

\bibitem[Per87]{perrin87}
Daniel Perrin, \emph{Courbes passant par m points g{\'e}n{\'e}raux de
  {$\mathbb{P}^{3}$}}, M{\'e}m. Soc. Math. France (N.S.) (1987), no.~28-29,
  138.

\bibitem[Tak07]{takagi07}
Satoshi Takagi, \emph{Fujita's approximation theorem in positive
  characteristics}, J. Math. Kyoto Univ. \textbf{47} (2007), no.~1, 179--202.

\bibitem[Tra95]{Tra95morse}
Stefano Trapani, \emph{Numerical criteria for the positivity of the difference
  of ample divisors}, Math. Z. \textbf{219} (1995), no.~3, 387--401.

\bibitem[Xia14]{xiao2014movable}
Jian Xiao, \emph{Movable intersection and bigness criterion}, 2014, arXiv
  preprint arXiv:1405.1582.

\bibitem[Xia15]{xiao15}
\bysame, \emph{Characterizing volume via cone duality}, 2015, arXiv preprint
  arXiv:1502.06450.

\bibitem[Zar62]{zariski62}
Oscar Zariski, \emph{The theorem of {R}iemann-{R}och for high multiples of an
  effective divisor on an algebraic surface}, Ann. of Math. (2) \textbf{76}
  (1962), 560--615.

\end{thebibliography}
\bibliographystyle{amsalpha}

\noindent
\textsc{Brian Lehmann}\\
\textsc{Department of Mathematics, Boston College,
Chestnut Hill, MA 02467, USA}\\
\verb"Email: lehmannb@bc.edu"\\

\noindent
\textsc{Jian Xiao}\\
\textsc{Institute of Mathematics, Fudan University, 200433 Shanghai, China}\\

\noindent
\textsc{Current address:}\\
\textsc{Institut Fourier, Universit\'{e} Joseph Fourier, 38402 Saint-Martin d'H\`{e}res, France}\\
\verb"Email: jian.xiao@ujf-grenoble.fr"
\end{document}